\newtheorem{theorem}{Theorem}[section]
\newtheorem{corollary}[theorem]{Corollary}
\newtheorem{lemma}[theorem]{Lemma}
\newtheorem{example}[theorem]{Example}
\theoremstyle{definition}
\newtheorem{remark}[theorem]{Remark}
\newtheorem{definition}[theorem]{Definition}
\newtheorem{algorithm}[theorem]{Algorithm}
\newtheorem{assumption}{Assumption}
\DeclareMathOperator*{\argmin}{arg\,min}
\newcommand{\G}{\mathcal{G}}
\newcommand{\R}{\mathbb{R}}
\newcommand{\N}{\mathbb{N}}
\DeclareMathOperator{\prox}{prox}
\DeclareMathOperator{\epi}{epi}
\DeclareMathOperator{\sign}{sign}
\DeclareMathOperator{\conv}{conv}
\DeclareMathOperator{\supp}{supp}
\DeclareMathOperator{\dom}{dom}
\DeclareMathOperator{\gph}{gph}
\def\dx{\,\mathrm{d}x}
\def\dt{\,\mathrm{d}t}
\numberwithin{equation}{section}
\title{A proximal gradient method for control problems with nonsmooth and nonconvex control cost}
\author{Carolin Natemeyer, Daniel Wachsmuth
\footnote{Institut f\"ur Mathematik,
Universit\"at W\"urzburg,
97074 W\"urzburg, Germany, {\tt carolin.natemeyer@mathematik.uni-wuerzburg.de, daniel.wachsmuth@mathematik.uni-wuerzburg.de}.
This research was partially supported by the German Research Foundation DFG under project grant Wa 3626/3-2.}}
\begin{document}
	\maketitle

\paragraph{Abstract.}
We investigate the convergence of an application of a proximal gradient method
to control problems with nonsmooth and nonconvex control cost.
Here, we focus on control cost functionals that promote sparsity, which includes functionals of $L^p$-type for $p\in [0,1)$.
We prove stationarity properties of weak limit points of the method. These
properties are weaker than those provided by Pontryagin's maximum principle
and weaker than $L$-stationarity.

\paragraph{Keywords.}
Proximal gradient method, nonsmooth and nonconvex optimization, sparse control problems

\section{Introduction}
	Let $\Omega\subset \R^n$ be Lebesgue measurable with finite measure.
	We consider a possibly non-smooth optimal control problem of type
	\begin{equation} \label{P} \tag{P}
	\min\limits_{u\in L^2(\Omega)} f(u)+ \int_\Omega g(u(x))\dx.
	\end{equation}

	Here, the function  $g:\R\to \R\cup\{+\infty\}$ is nonconvex and nonsmooth. Examples include \[g(u)=|u|^p, \quad p\in(0,1),\] and
	\[g(u) = |u|_0: = \begin{cases} 1 & \text{ if } u\ne 0\\ 0 & \text{ if } u=0.\end{cases}.\]
	The function $f:L^2(\Omega) \to \R$ is assumed to be smooth. Here, we have in mind to choose $f(u):=f(y(u))$
	as the smooth part of an optimal control problem incorporating the state equation and possibly  smooth cost functional.
	We will make the assumptions on the ingredients of the control problem precise below in Section \ref{sec2}.

	Due to the properties of $g$, the optimization problem \eqref{P} is challenging in several ways. First of all,
	the resulting integral functional $u\mapsto \int_\Omega g(u(x))\dx$ is not weakly lower semicontinuous in $L^2(\Omega)$,
	so it is impossible to prove existence of solutions of \eqref{P} by the direct method.
	Second, it is challenging to solve numerically, i.e., to compute local minima or stationary points.

	In this paper, we address this second issue.
	Here, we propose to use the proximal gradient method (also called forward-backward algorithm \cite{bauschkecombettes}).
	The main idea of this method is as follows: Suppose the objective is to minimize a sum $f+j$ of
	two functions  $f$ and $j$ on the Hilbert space $H$ where $f$ is smooth.
	Given an iterate $u_k$, the next iterate $u_{k+1}$ is computed  as
	\begin{equation}\label{eq101}
	 u_{k+1} =  \argmin\limits_{u\in H}\left( f(u_k)+\nabla f(u_k)(u-u_k)+\frac{L}{2}\|u-u_k\|^2_{H}+j(u)\right),
	\end{equation}
	where $L>0$ is a proximal parameter, and  $L^{-1}$ can be interpreted as a step-size.
	In our setting, the functional to be minimized in each step is an integral function, whose minima can be computed by minimizing
	the integrand pointwise.
	Using the so-called prox map, that is defined by
	\begin{equation}\label{ch4:prox}
	\prox_{\gamma j}(z) = \argmin\limits_{x\in H}\left( \frac{1}{2}\|x-z\|^2_H+\gamma j(x)\right),
	\end{equation}
	where $\gamma>0$, the next iterate of the algorithm can be written as
	\[
	 u_{k+1} = \prox_{L^{-1} j}\left(u_k - \frac1L  \nabla f(u_k)\right).
	\]
	If $j\equiv0$, the method reduces to the steepest descent method. If $j$ is the indicator function
	of a convex set, then the method is a gradient projection method.
	If $f$ and $j$ are convex, then the convergence properties of the method are well-known:
	under mild assumptions
	the iterates $(u_k)$ converge weakly to a global minimum of $f+j$, see, e.g., \cite[Corollary 27.9]{bauschkecombettes}.
	If $f$ is non-convex, then weak sequential limit points of $(u_k)$ are stationary, that is, they satisfy $-\nabla f(u^*) \in \partial j(u^*)$.
	If in addition $j$ is nonconvex, then much less can be proven.
	In finite-dimensional problems, limit points are fixed points of the iteration, and satisfy the so-called $L$-stationary type conditions,
	see \cite{BeckEldar2013} and \cite[Chapter 10]{Beck2014} for optimization problems with $l^0$-constraints.
	A feasible point $u^*$ is called $L$-stationary if
	\[
	 u^* = \prox_{L^{-1} j}\left(u^* - \frac1L  \nabla f(u^*)\right).
	\]
	In a recent contribution \cite{DW:1}, the method was analyzed when applied to control problems with $L^0$-control cost.
	There it was proven that weak sequential limit points of the iterates in $L^2(\Omega)$ satisfy the $L$-stationary type condition.
	An essential ingredient of the analysis in \cite{DW:1} was that the functional $g$ is sparsity promoting: solutions of the
	proximal step are either zero or have a positive distance to zero.
	We will show how this property can be obtained under weak assumptions on the functional $g$ in \eqref{P} near $u=0$, see Section \ref{sec3}.
	Still this is not enough to conclude $L$-stationarity of limit points. We will show that weak
	limit points satisfy a weaker condition in general, see Theorem \ref{thm:set_valconv}.
	Under stronger assumptions, $L$-stationarity can be obtained (Theorems \ref{theo420}, \ref{theo421}).
	Let us emphasize that, under weak assumptions,  the sequence of iterates $(u_k)$ contains weakly converging subsequences
	but is not weakly convergent in general. Pointwise a.e.\@ and strong convergence is obtained in Theorem~\ref{thm:strongconv}.
	We apply these results to $g(u)=|u|^p$, $p\in (0,1)$ in Section \ref{ch:4}.

	Interestingly, the proximal gradient method sketched above is related to algorithms based on proximal minimization
	of the Hamiltonian in control problems. These algorithms are motivated by Pontryagin's maximum principle.
	First results for smooth problems can be found in \cite{SakawaShindo1980}. There, stationarity of pointwise limits of $(u_k)$ was proven.
	Under weaker conditions it was proved in \cite{Bonnans1986} that the residual in the optimality conditions tends to zero.
	These results were transferred to control problems with parabolic partial differential equations in \cite{BreitenbachBorz2019a}.


\paragraph{Notation.}
We will frequently use $\bar \R:= \R \cup \{+\infty\}$.

\section{Preliminary considerations}\label{sec2}

Throughout the paper, we will use the following assumption on the function $f$.
\begin{assumption}\label{ass_A}
The functional $f:L^2(\Omega)\to \R$ is bounded from below and weakly lower semicontinuous. Moreover, $f$ is Fréchet differentiable and $\nabla f:L^2(\Omega)\to L^2(\Omega)$ is Lipschitz continuous with constant $L_f$, i.e.,
\[
\|\nabla f(u_1)-\nabla f(u_2)\|_{L^2(\Omega)}\leq L_f\|u_1-u_2\|_{L^2(\Omega)}
\]
holds for all  $u_1,u_2\in L^2(\Omega)$.
\end{assumption}
For the moment, let $g:\R \to \bar\R$ be lower semicontinuous and bounded from below. In Section \ref{sec3} below, we will
give the precise assumptions on $g$ that allow sparse controls.
Let $u\in L^2(\Omega)$ be given. Then $x\mapsto g(u(x))$ is a measurable function, and we define
\[
 j(u):= \int_\Omega g(u(x))\dx.
\]
Then $j:L^2(\Omega) \to \bar \R$ is well-defined and lower semicontinuous, but not weakly lower semicontinuous in general.
Hence standard existence proofs cannot be applied. For a discussion, we refer to \cite{ItoKunisch:1,DW:1}

\begin{remark}
 The results are also valid for the general case that $g$ depends on $x\in \Omega$, which results in the integral functional
 $j(u)=\int_\Omega g(x,u(x))\dx$,
 provided $g:\Omega\times \R\to\bar \R$ is a normal integrand, for the definition we refer to \cite[Definition VIII.1.1]{EkelandTemam1999}.
\end{remark}

\color{black}

%

	\subsection{Necessary optimality conditions}
	The mapping $u\mapsto \int_\Omega g(u(x))\dx$ is not directionally differentiable  in general, and thus there is no first order optimality condition.
	In the following we are going to derive a necessary optimality condition for  \eqref{P}, known as Pontryagin maximum principle,  where no derivatives of the functional are involved.
	We formulate the Pontryagin maximum principle (PMP)  as in \cite{DW:1}. A control $\bar u\in L^2(\Omega)$ satisfies (PMP) if and only if
	for almost all $x\in\Omega$
	\begin{equation} \label{PMP} \nabla f(\bar u)(x) \bar u(x) + g(\bar u(x))
	\leq \nabla f(\bar u)(x)\cdot v+ g(v)
	\end{equation}
	holds true for all  $v\in \R$. The following result is shown in \cite[Thm. 2.5]{DW:1} for the special choice $g(u):=|u|_0$.

	\begin{theorem}[Pontryagin maximum principle] \label{thm:PMP}
		Let $\bar u\in L^2(\Omega)\cap L^\infty(\Omega)$ be locally optimal to \textnormal{(\ref{P})}. Furthermore, assume $f$ satisfies $$f(u)-f(\bar u)=\nabla f(\bar u)\cdot(u-\bar u)+o(\|u-\bar u\|_{L^1(\Omega)})$$ for all $u\in U_{ad}$.  Then $\bar u$ satisfies the Pontryagin maximum principle \eqref{PMP}.
	\end{theorem}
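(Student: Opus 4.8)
The plan is to argue by contradiction using needle (spike) variations, exploiting that the only nonsmoothness sits in the separable integral term $j$ while $f$ is handled through the assumed first-order expansion in the $L^1$-norm. Write $p:=\nabla f(\bar u)\in L^2(\Omega)$ and introduce the reduced Hamiltonian $H(x,v):=p(x)\,v+g(v)$. Since $\bar u$ is locally optimal the objective is finite at $\bar u$, so $g(\bar u(\cdot))$ is finite a.e.; hence \eqref{PMP} is exactly the statement that, for a.a.\ $x$, the value $\bar u(x)$ minimizes $v\mapsto H(x,v)$ over $\R$. Suppose this fails. Then, after discarding a null set, the set of points $x$ for which some $v$ yields $H(x,v)<H(x,\bar u(x))$ has positive measure.

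The first---and main---difficulty is to turn this pointwise, $x$-dependent improvement into a single admissible competitor. Because the improving value may depend on $x$ and need not be attainable by a fixed or even rational $v$ (lower semicontinuity of $g$ permits downward jumps), I cannot simply test with a constant control. Instead I would invoke a measurable selection theorem (Filippov/Aumann type, using that $(x,v)\mapsto H(x,v)$ is a normal integrand so that the relevant sets are measurable) to produce a measurable $w$ with $H(x,w(x))<H(x,\bar u(x))$ on a positive-measure set. Restricting successively to subsets of still-positive measure, I would arrange that $w$ is bounded and that the improvement is uniform, i.e.\ there are $\varepsilon>0$ and a set $\Omega_\varepsilon$ of positive measure with $H(x,w(x))\le H(x,\bar u(x))-\varepsilon$ on $\Omega_\varepsilon$; boundedness of $\bar u$ and $w$ also guarantees $g(w(\cdot))<\infty$ there, so the variations below stay admissible.

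Next I would perform the spike variation: for measurable $E\subseteq\Omega_\varepsilon$ set $u_E:=\bar u+(w-\bar u)\chi_E$, which lies in $L^2(\Omega)\cap L^\infty(\Omega)$ and satisfies $\|u_E-\bar u\|_{L^2}^2\le C\,|E|$ and $\|u_E-\bar u\|_{L^1}\le C\,|E|$, so $u_E$ enters the local optimality neighbourhood once $|E|$ is small. Local optimality gives $0\le [f(u_E)+j(u_E)]-[f(\bar u)+j(\bar u)]$. Using the assumed expansion, $f(u_E)-f(\bar u)=\int_E p\,(w-\bar u)\dx+o(\|u_E-\bar u\|_{L^1})$, and since $\|u_E-\bar u\|_{L^1}\to0$ as $|E|\to0$ the error is $o(|E|)$; moreover $j(u_E)-j(\bar u)=\int_E\bigl(g(w)-g(\bar u)\bigr)\dx$ by separability of $j$. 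Adding these yields $0\le\int_E\bigl(H(x,w(x))-H(x,\bar u(x))\bigr)\dx+o(|E|)\le -\varepsilon\,|E|+o(|E|)$, which is strictly negative for $|E|$ small enough---a contradiction. Hence \eqref{PMP} holds.

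The main obstacle is the measurable selection and the accompanying measurability/normal-integrand bookkeeping needed to replace the $x$-dependent improving directions by one admissible competitor; by contrast, the spike-variation estimate is routine, the only subtlety being that the first-order error must be measured against $\|u_E-\bar u\|_{L^1}$ and absorbed into $o(|E|)$ by shrinking the support $E$ rather than by any lower bound on $|w-\bar u|$.
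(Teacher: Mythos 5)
Your argument is correct, but it follows a genuinely different route from the paper's. You argue by contradiction: assuming the maximum principle fails on a set of positive measure, you invoke measurable selection for the normal integrand $(x,v)\mapsto \nabla f(\bar u)(x)v+g(v)$ to build a single measurable competitor $w$ with a uniform improvement $\varepsilon$ on a positive-measure set, and then contradict local optimality with a spike $u_E=\bar u+(w-\bar u)\chi_E$ supported on a small subset $E$, absorbing the first-order remainder into $o(|E|)$ via $\|u_E-\bar u\|_{L^1}\le C|E|$. The paper instead proceeds directly: it fixes a countable dense subset $\{(v_i,t_i)\}$ of $\epi(g)$, perturbs $\bar u$ to the constant value $v_i$ on shrinking balls $B_r(x)$, divides by $|B_r(x)|$, and applies Lebesgue's differentiation theorem to obtain the pointwise inequality at every Lebesgue point simultaneously for all $i$ outside a single null set; density in the epigraph (which exploits lower semicontinuity of $g$, since a dense subset of $\R$ alone would not suffice) then yields the inequality for all $v$. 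The trade-off is that the paper's proof avoids any measurable-selection machinery, replacing it by the countable epigraph net plus Lebesgue differentiation, while your proof avoids Lebesgue differentiation and the epigraph trick at the cost of the selection theorem; both correctly identify that the nonsmooth term is handled by separability of $j$ and that the $f$-remainder must be controlled through the $L^1$-norm of the needle. One small point to make explicit in your write-up: the measurability of the ``bad'' set $\{x:\inf_v H(x,v)<H(x,\bar u(x))\}$ itself needs the normal-integrand argument before you can assert it has positive measure, and finiteness of $g(\bar u(\cdot))$ a.e.\ should be justified by finiteness of the optimal value (using that $g$ is bounded below); you gesture at both, and both are easily completed.
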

	\begin{proof}
		Let $\bar u$ be a local solution to \eqref{P}. We will use needle perturbations of the optimal control.
		Let $E:=\{(v_i,t_i),\ i\in \mathbb N\}$
		be a countable dense subset of $$\epi(g) = \{(v,t)\in \R\times\R: g(v)\leq t\}.$$
		For arbitrary $x\in\Omega$ define $u_{r,i}\in L^2(\Omega)$ by
		$$u_{r,i}:=\begin{cases} v_i\enspace &\textnormal{ on } B_r(x),\\
		\bar u&\textnormal{otherwise} \end{cases}$$
		for some $r>0$ and $i\in\mathbb{N}$.
		Let $\chi_r:=\chi_{B_r(x)}$, then we have $u_{r,i}=(1-\chi_r)\bar u+\chi_r v_i$ and
		\begin{align*} \|u_{r,i}-\bar u\|_{L^1(\Omega)}&=\|\chi_r(v_i-\bar u)\|_{L^1(\Omega)} \leq (|v_i|+\|\bar u\|_{L^\infty(\Omega)})\|\chi_r\|_{L^1(\Omega)}\\&=(|v_i|+\|\bar u\|_{L^\infty(\Omega)})|B_r(x)|.
		\end{align*}
		With $j(u):= \displaystyle\int_\Omega g(u(x))\dx$ 
		 we get

		\begin{align*}
		0 & \leq f(u_{r,i})+j(u_{r,i})-f(\bar{u})-j(\bar{u}) \\
		& =  \int_\Omega\nabla f(\bar{u})(u_{r,i}-\bar u)\dt +o(\|u_{r,i}-\bar u\|_{L^1(\Omega)})
		+ \int_\Omega( g(u_{r,i}) -g(\bar u)) \dt\\
		&\leq \int_{B_r(x)}\nabla f(\bar u)(v_i-\bar u)+( t_i -g(\bar u))\dt +o(\|u_{r,i}-\bar u\|_{L^1(\Omega)})
		\end{align*}

		After dividing above inequality by $|B_r(x)|$ and passing $r\searrow 0$, we obtain by Lebesgue's differentiation theorem
		\begin{equation}\label{ch3:pmpinequ} 0\leq\nabla f(\bar u)(x)\cdot(v_i-\bar u(x))+(t_i-g(\bar u(x))).
		\end{equation}
		This holds for every Lebesgue point $x\in\Omega$ of the integrands, i.e., for all $x\in\Omega\setminus N_i$, where $N_i$ is a set of zero Lebesgue measure,
		on which the above inequality is not satisfied. Since the union $\bigcup_iN_i$ is also of measure zero, \eqref{ch3:pmpinequ} holds true for all $x\in \Omega\setminus\bigcup_{i}N_i$ for all $i$.
		Due to the density assumption, for $(v,t)\in \epi(g+I_{U_{ad}})$ we find a sequence $(v_k,t_k)\to(v,t)$ with $(v_k,t_k)\in E$,
		and hence for almost all $x\in\Omega$ it holds
		$$0\leq\nabla f(\bar u)(x)\cdot(v-\bar u(x))+(t-g(\bar u(x))).$$
		for all $(v,t)\in \epi(g)$. Choosing $t = g(v)$ yields the claim.
	\end{proof}

\section{Sparsity promoting proximal operators}
\label{sec3}

In this section, we will investigate the minimization problems that have to be solved in order to
compute the proximal gradient step in \eqref{eq101}.
Let $g:\R\to\bar\R$ be proper and lower-semicontinuous.
For $s>0$ and $q\in \R$,
we define the function
\[
 h_{q,s}(u) := -qu+\frac{1}{2}u^2+sg(u).
\]
Here, we have in mind to set $q:=-\nabla f(u_k)(x)$.
Let us investigate scalar-valued optimization problems of form
\begin{equation} \label{P:min_h}
\min\limits_{u\in\R} h_{q,s}(u).
\end{equation}
The solution set  is given by the proximal map $\prox_{sg}:\R\rightrightarrows\R$ of $g$,
\[
\prox_{sg}(q) := \argmin\limits_{u\in\R}\enspace \left(\frac{1}{2}|u-q|^2+sg(u)\right).
\]
If $g$ is convex then \eqref{P:min_h} is a convex problem, and the proximal map is single-valued.
If $g$ is bounded from below and lower semicontinuous, $\prox_{sg}(q)$ is nonempty for all $q$ but may be multi-valued for some $q$.

The focus of this section is to investigate under which assumptions $\prox_{sg}$ is sparsity promoting:
Here, we want to prove that there is $\sigma>0$ such that
\[
 u\in \prox_{sg} \ \Rightarrow \ u=0 \text{ or } |u|\ge \sigma.
\]
In \cite{nikolova:1}, this was also investigated for some special cases of non-convex functions.
We will show that the following assumption is enough to guarantee the sparsity promoting property,
it contains the result from \cite{nikolova:1} as a special case.
\color{black}

\begin{assumption}\label{ass_B}
\phantom{bla bla}
\begin{enumerate}[label=(B\arabic*)]
	\item\label{assb:1} $g:\R\to\bar\R$ is lower semicontinuous, symmetric with $g(0)=0$.
	\item\label{assb:2} There is $u\ne0$ such that $g(u)\in \R$.
	\item\label{assb:4} $g$ satisfies one of the following properties:
	\begin{enumerate}[label=(B\arabic{enumi}.\alph*)]
		\item\label{assb:4a} $g$ is twice differentiable on an interval $(0,\epsilon)$ for some $\epsilon>0$ and $\limsup\limits_{u\searrow0}g''(u) \in (-\infty,0)$,
		\item\label{assb:4b} $g$ is twice differentiable on an interval $(0,\epsilon)$ for some $\epsilon>0$ and $\lim\limits_{u\searrow0}g''(u) =-\infty$,
		\item\label{assb:4c} $0 < \liminf_{u\searrow0} g(u)$.
	\end{enumerate}
	\item\label{assb:5}$g(u) \ge0$ for all $u\in \R$.
\end{enumerate}
\end{assumption}

By assumption  B, the function $g$ is non-convex in a neighborhood of $0$ and nonsmooth at $0$. Some examples are given below.
\begin{example} Functions satisfying assumption \ref{ass_B}.
	\begin{enumerate}[label=(\roman*)]
		\item $g(u):=|u|_0 := \begin{cases}
		1\quad &u\not=0,\\ 0&\text{else},
		\end{cases}$
		\item $g(u) := |u|^p,\quad p\in(0,1)$,
		\item $g(u) := \ln(1+\alpha|u|)$, with a given positive constant  $\alpha$.
		\item The indicator function of the integers $g(u):=\delta_\mathbb{Z}(u) = \begin{cases}
			0\quad&\text{if } u\in\mathbb{Z},\\
			\infty &\text{otherwise.}
		\end{cases}$
	\end{enumerate}
\end{example}

We are interested in the characterization of global solutions to \eqref{P:min_h} in terms of $q$. It is well-known that for given $s>0$ the proximal map $q\rightrightarrows \prox_{sg}(q)$ is monotone,
i.e., the inequality
$$0\leq (q_1-q_2)\left(\prox_{sg}(q_1)-\prox_{sg}(q_2)\right)$$
is satisfied for all $q_1,q_2\in\R$.
In addition, the graph of $\prox_{sg}$ is a closed set.
Moreover, the following results hold true.

\begin{lemma}\label{lem:prox_sign}
Let $g:\R\to\bar\R$ satisfy Assumption \ref{assb:1}.
Let $u\in \prox_{sg}(q)$. Then $u\ge 0$ if and only if $q\ge0$.
\end{lemma}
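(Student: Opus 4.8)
The plan is to exploit the symmetry of $g$ by comparing the value of the proximal objective at the minimizer $u$ with its value at the reflected point $-u$. Write $\phi(v):=\tfrac12(v-q)^2+sg(v)$, so that $u\in\prox_{sg}(q)$ means exactly that $u$ is a global minimizer of $\phi$ on $\R$; in particular $\phi(u)\le\phi(-u)$. I would first record that $g(u)$ is necessarily finite: since $g(0)=0$ gives $\phi(0)=\tfrac12 q^2<\infty$, a point with $g(u)=+\infty$ could not minimize $\phi$. Hence the reflection comparison is genuine and not a vacuous $\infty\le\infty$.

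The key step is then the reflection inequality. Using $\phi(u)\le\phi(-u)$ together with the symmetry $g(-u)=g(u)$ from \ref{assb:1}, the two $g$-terms cancel and only the quadratic parts survive, leaving
\[
\tfrac12(u-q)^2\le\tfrac12(u+q)^2 ,
\]
which expands to $-2uq\le 2uq$, i.e.
\[
uq\ge 0 .
\]
Thus every element of the proximal set has its product with $q$ nonnegative. I expect this computation to be entirely routine; the one thing to check is that $-u$ is admissible in the comparison, which again follows from symmetry, finiteness of $g(u)$ forcing finiteness of $g(-u)$.

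It remains to turn $uq\ge 0$ into the stated biconditional. For $q>0$ the product condition immediately forces $u\ge 0$, and for $q<0$ it forces $u\le 0$; symmetrically $u>0\Rightarrow q\ge 0$ and $u<0\Rightarrow q\le 0$, which covers all cases in which one of $u,q$ is nonzero. I expect the main obstacle to lie in the degenerate boundary case $q=0$ (and its mirror $u=0$), where $uq\ge 0$ holds automatically and leaves the sign undetermined, since the problem is then itself reflection-symmetric. Here I would either argue directly on $\phi$, or invoke the monotonicity and the closedness of $\gph\prox_{sg}$ recorded just before the statement, to fix the sign and close the equivalence; this is the only point that requires care beyond the one-line reflection estimate.
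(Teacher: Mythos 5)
Your reflection computation is, in substance, exactly the paper's proof. The paper argues that by symmetry $u\in\prox_{sg}(q)$ holds if and only if $-u\in\prox_{sg}(-q)$, and then applies the monotonicity of the prox map to the pairs $(q,u)$ and $(-q,-u)$, which gives $(q-(-q))(u-(-u))=4uq\ge0$; unwinding the standard proof of that monotonicity for this particular pair of points collapses to precisely your inequality $\phi(u)\le\phi(-u)$. So your core argument is correct and essentially the same, merely self-contained rather than routed through the monotonicity statement, and your preliminary check that $g(u)<\infty$ (hence $g(-u)=g(u)<\infty$) is a sensible precaution.

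Concerning the degenerate case you flag: your instinct that this is the delicate point is right, but it cannot be closed, because the biconditional as literally stated fails there, and the paper's own proof yields no more than $uq\ge0$ either. Concretely, for $g=|\cdot|_0$ (which satisfies all of Assumption B) one has $0\in\prox_{sg}(q)$ for every $|q|\le\sqrt{2s}$, so $u=0$ with $q<0$ occurs and the ``only if'' direction fails; conversely, a symmetric lsc $g$ with $g(0)=0$, $g(\pm1)=-1$ and $g=+\infty$ elsewhere gives, for $s>1/2$, $-1\in\prox_{sg}(0)$, so the ``if'' direction fails as well. Neither the monotonicity nor the closedness of $\gph\prox_{sg}$ can repair this, since both are blind to which element of a multivalued $\prox_{sg}(0)$ was selected. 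The honest conclusion of your argument and of the paper's is $uq\ge0$, i.e.\ $u>0\Rightarrow q\ge0$ and $u<0\Rightarrow q\le0$ together with their mirrors, and this is the only form in which the lemma is actually used later (in Theorem \ref{thm:discontinuity} and Corollary \ref{cor:G_properties}). Stop at $uq\ge0$ and do not chase the stated equivalence.
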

\begin{proof}
Due to \ref{assb:1}, we have $u \in  \prox_{sg}(q)$ if and only if $-u\in \prox_{sg}(-q)$.
The claim now follows from the monotonicity of the $\prox$-mapping.
\end{proof}

\begin{lemma}\label{lem_prox_growth}
Let $g:\R\to\bar\R$ satisfy Assumptions \ref{assb:1}, \ref{assb:5}.
Then the growth condition
\[
 |u| \le 2|q| \quad \forall u \in \prox_{sg}(q)
\]
is satisfied.
\end{lemma}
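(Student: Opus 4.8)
The plan is to exploit the optimality of $u$ as a minimizer of the prox-objective by comparing it against the single test point $v=0$. Since $u\in\prox_{sg}(q)$ minimizes $v\mapsto \frac12|v-q|^2+sg(v)$ over $\R$, I would evaluate this objective at $v=u$ and at the admissible point $v=0$; optimality of $u$ then gives
\[
\frac12|u-q|^2+sg(u)\le \frac12|0-q|^2+sg(0)=\frac12 q^2,
\]
where the last equality uses the normalization $g(0)=0$ from Assumption \ref{assb:1}.

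Next I would discard the nonnegative term $sg(u)$. By Assumption \ref{assb:5} we have $g(u)\ge0$, and since $s>0$ this gives $sg(u)\ge0$, so the displayed inequality yields $\frac12|u-q|^2\le\frac12 q^2$, that is $|u-q|\le|q|$. The triangle inequality then closes the argument at once: $|u|\le|u-q|+|q|\le 2|q|$, which is exactly the claimed bound. Note this automatically handles the degenerate case $q=0$, where $|u-q|\le|q|=0$ forces $u=0$.

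I do not expect any genuine obstacle here, as the only ingredients are the admissibility of the test point $v=0$, the normalization $g(0)=0$, and the nonnegativity of $g$. The one point deserving a word of care is that $\prox_{sg}(q)$ is nonempty (guaranteed by the lower semicontinuity and boundedness from below of $g$, as observed just before the lemma), so that fixing $u\in\prox_{sg}(q)$ and invoking its optimality against $v=0$ is legitimate. In particular, the sign information from Lemma \ref{lem:prox_sign} is not needed for this estimate.
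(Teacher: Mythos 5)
Your proof is correct and follows essentially the same route as the paper: both compare the prox-objective at $u$ against the test point $v=0$, use $g(0)=0$ and $g\ge 0$ to discard the $sg(u)$ term, and extract $|u|\le 2|q|$ (the paper expands the square and divides by $|u|$, while you keep $|u-q|\le|q|$ and apply the triangle inequality, a cosmetic difference). No gaps.
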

\begin{proof}
 Let $u \in \prox_{sg}(q)$. By optimality, the following inequality
 \[
 \frac12 u^2 - qu + g(u) \le g(0) = 0.
 \]
 is true. Since $g(u)\ge 0$, the claim follows.
\end{proof}
\color{black}

\begin{lemma} \label{lem:char_prox}
Let $H$ be a Hilbert space.
Let $f:H\to \bar\R$ be a function with $f(0) \in \R$. Then
$0\in \prox_f(q)$ for all $q\in H$ if and only if $f$ is of the form $f(x) = f(0) + \delta_{\{0\}}(x)$.
Here, $\delta_{\{0\}}$ is the indicator function of $\{0\}$ defined by $\delta_{\{0\}}(0)=0$ and $\delta_{\{0\}}(x)=+\infty$ for all $x\ne0$.
\end{lemma}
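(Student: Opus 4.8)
The plan is to prove the two implications separately, since the statement is an equivalence. The reverse (sufficiency) direction is immediate and I would dispatch it first: if $f(x) = f(0) + \delta_{\{0\}}(x)$, then for any $q \in H$ the objective $\tfrac12\|x-q\|_H^2 + f(x)$ takes the finite value $\tfrac12\|q\|_H^2 + f(0)$ at $x = 0$ and equals $+\infty$ at every $x \ne 0$; hence $x=0$ is the unique global minimizer, so $0 \in \prox_f(q)$ for every $q$. All the content lies in the forward (necessity) direction.

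For necessity, I would start from the defining optimality property behind $0 \in \prox_f(q)$: for every $q \in H$ and every $x \in H$,
\[
\tfrac12\|q\|_H^2 + f(0) \le \tfrac12\|x-q\|_H^2 + f(x).
\]
Expanding $\|x-q\|_H^2 = \|x\|_H^2 - 2\langle x,q\rangle_H + \|q\|_H^2$ and rearranging, the $\|q\|_H^2$ terms cancel and one is left with
\[
f(x) \ge f(0) - \tfrac12\|x\|_H^2 + \langle x,q\rangle_H,
\]
an inequality that must hold \emph{simultaneously} for all $q \in H$.

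The key step — and essentially the only idea in the argument — is to exploit this freedom in $q$. Fixing an arbitrary $x \ne 0$ and inserting the particular choice $q = t x$ with $t > 0$ yields $f(x) \ge f(0) - \tfrac12\|x\|_H^2 + t\|x\|_H^2$. Since $x \ne 0$ gives $\|x\|_H^2 > 0$, letting $t \to +\infty$ forces $f(x) = +\infty$. As $f(0)$ is finite by hypothesis, this shows $f = f(0) + \delta_{\{0\}}$, which is the asserted form.

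I do not expect a genuine obstacle: the argument is elementary once the optimality inequality is unwound. The only point needing care is to recognize that the inequality is quantified over all $q$ at once, and then to choose $q$ growing in the direction of $x$ so that the linear term $\langle x, q\rangle_H$ dominates and drives the lower bound to $+\infty$.
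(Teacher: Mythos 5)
Your proposal is correct and follows essentially the same route as the paper: unwind the optimality inequality defining $0 \in \prox_f(q)$, cancel the $\|q\|_H^2$ terms, then set $q = tx$ and let $t \to +\infty$ to force $f(x) = +\infty$ for $x \ne 0$. No issues.
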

\begin{proof}
If $f$ is of the claimed form, then clearly $\prox_f(q) = \{0\}$ for all $q$. Now, let $0\in\prox_f(q)$ for all $q\in H$. Then it holds
\[
\frac12\|u-q\|_H^2 + f(u) \ge \frac12\|q\|_H^2 + f(0) \quad \forall u,q\in H.
\]
This is equivalent to
\[
 f(u) + \frac12\|u\|^2_H  \ge f(0) + (u,q)_H \quad \forall u,q\in H.
\]
Setting $q:=tu$ and letting $t \to +\infty$ shows $f(u)=+\infty$ for all $u\ne0$.
\end{proof}

\begin{lemma} \label{lem:ass_q0}
Let $g:\R\to\bar\R$ satisfy Assumption \ref{assb:1}.
	Let $s>0$.
	Assume there is $q_0\ge 0$ such that
	\begin{equation} \label{ass_q0}
	q_0 |u| \leq \frac{1}{2} u^2 + sg(u) \quad \forall u\in \R.
	\end{equation}
	Then $u=0$ is a global solution to \eqref{P:min_h} if
	$|q| \le q_0$.
If $|q| < q_0$ then $u=0$ is the unique global solution to \eqref{P:min_h}.
Moreover, if
\[
q_0:=\sup\{q\geq0: q|u|\leq\frac{1}{2} u^2 + sg(u) \quad \forall u\in \R\},
\]
then $|q|\leq {q_0}$ is also necessary for $u=0$ being a global solution to \eqref{P:min_h}.
\end{lemma}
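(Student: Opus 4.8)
The starting point is the observation that, because $g(0)=0$ by \ref{assb:1}, we have $h_{q,s}(0)=0$. Hence $u=0$ is a global solution of \eqref{P:min_h} precisely when $h_{q,s}(u)\ge 0$ for all $u\in\R$, i.e.\ when $-qu+\tfrac12 u^2+sg(u)\ge 0$ for all $u$. All three claims will be read off from this reformulation together with the single elementary bound $-qu\ge -|q|\,|u|$.

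For the two sufficiency statements I would estimate $h_{q,s}$ from below. Using $-qu\ge -|q|\,|u|$ and then \eqref{ass_q0}, for every $u\in\R$ one gets
\[
h_{q,s}(u)=-qu+\tfrac12 u^2+sg(u)\ge -|q|\,|u|+\tfrac12 u^2+sg(u)\ge (q_0-|q|)\,|u|.
\]
If $|q|\le q_0$ the right-hand side is nonnegative, so $h_{q,s}(u)\ge 0=h_{q,s}(0)$, which is the first claim. If $|q|<q_0$, the right-hand side is strictly positive for every $u\ne 0$, so $h_{q,s}(u)>0=h_{q,s}(0)$ for all $u\ne 0$, giving uniqueness of the minimizer $u=0$.

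For the necessity part I would argue by contraposition, after rewriting the defining set. For fixed $u\ne0$ the constraint $q|u|\le\tfrac12 u^2+sg(u)$ is equivalent to $q\le(\tfrac12 u^2+sg(u))/|u|$, so that $q_0=\inf_{u\ne0}(\tfrac12 u^2+sg(u))/|u|$; the standing hypothesis (existence of some admissible $q_0\ge0$) guarantees this infimum is nonnegative. Now suppose $|q|>q_0$. Then $|q|$ is not admissible in the supremum, so there is $u^*\ne0$ with $|q|\,|u^*|>\tfrac12 (u^*)^2+sg(u^*)$; moreover $|q|>q_0\ge0$ forces $q\ne0$, so $\sign(q)$ is well defined. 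Setting $u:=\sign(q)\,|u^*|$ gives $qu=|q|\,|u^*|$, while the symmetry of $g$ in \ref{assb:1} yields $g(u)=g(u^*)$ and $\tfrac12 u^2=\tfrac12 (u^*)^2$, whence
\[
h_{q,s}(u)=-|q|\,|u^*|+\tfrac12 (u^*)^2+sg(u^*)<0=h_{q,s}(0).
\]
Thus $u=0$ is not a global solution, and $|q|\le q_0$ is necessary.

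The one-line lower bounds for the sufficiency claims are routine. The step requiring the most care is the necessity argument: one must convert the failure of the $|u|$-based inequality defining $q_0$ into an actual point at which $h_{q,s}$ is negative, and this is exactly where the symmetry assumption \ref{assb:1} enters, letting me align the sign of the test point with that of $q$ (and where one must record that $q\ne0$). I would also flag the degenerate case $q_0=+\infty$, for which the necessity claim is vacuous and which corresponds to $g$ being $+\infty$ off the origin.
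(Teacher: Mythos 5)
Your proof is correct and follows essentially the same route as the paper: both parts rest on the reformulation $h_{q,s}(0)=0$, the bound $-qu\ge-|q|\,|u|$ together with \eqref{ass_q0} for sufficiency and strict positivity for uniqueness, and the symmetry of $g$ from \ref{assb:1} to reduce the necessity claim to the $|u|$-based inequality. The only cosmetic difference is that you prove necessity by contraposition (exhibiting a point where $h_{q,s}<0$ when $|q|>q_0$) while the paper argues directly that a global minimizer at $0$ makes $|q|$ admissible in the supremum; these are logically the same argument.
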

\begin{proof}
	Let $ |q| \le q_0$. Take $u\ne0$, then we have
	\[
	h_{q,s}(u) = \frac12 u^2 + sg(u) -uq \ge\frac12 u^2 + sg(u) -|u| \cdot |q| \ge \frac12 u^2 + sg(u) - q_0|u|\ge 0 = h_{q,s}(0).
	\]

Note that the second inequality is strict if $|q|<q_0$.
For the second claim assume $u=0$ is a global solution to \eqref{P:min_h}.
Assume $q>0$. Then it holds
\[
 qu \le \frac12u^2 + sg(u) \quad \forall u\ge0.
\]
Since $g(u)=g(-u)$, this implies
\[
 q|u| \le \frac12u^2 + sg(u) \quad \forall u\in \R.
\]
By the definition of $q_0$, the inequality $q\le q_0$ follows. Similarly, one can prove $|q|\le q_0$ for negative $q$.
\end{proof}

 Together with Assumption \ref{ass_B}, these results allows us to show the following key observation concerning the characterization of solutions to $\eqref{P:min_h}$. A similar statement to the following can be found in {\cite[Theorem 1.1]{nikolova:1}}.

\begin{theorem} \label{thm:discontinuity}
	Let $g:\R\to\bar\R$ satisfy Assumption \ref{ass_B}.
	Then there exists $s_0\ge 0$ such that for every $s> s_0$ there is $u_0(s)>0$ such that for all $q\in\R$
	a global minimizer $u$ of \eqref{P:min_h} satisfies
	\[
	u = 0 \textnormal{ or } |u|\geq u_0(s).
	\]
	In case $g$ satisfies \ref{assb:4b} or \ref{assb:4c}, $s_0$ can be chosen to be zero.
	Moreover, for all $s>0$ there is $q_0:=q_0(s)>0$ such that $u=0$ is a global solution to \eqref{P:min_h} if and only if $|q| \leq q_0$.
If $|q| < q_0$ then $u=0$ is the unique global solution to \eqref{P:min_h}.
\end{theorem}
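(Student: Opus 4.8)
The plan is to reduce everything to $q\ge 0$ and to nonnegative minimizers: by Lemma \ref{lem:prox_sign} and the symmetry of $g$ it suffices to exclude global minimizers from a punctured interval $(0,u_0)$. Throughout I write $\phi(u):=\tfrac12u^2+sg(u)$, so that $h_{q,s}(u)=\phi(u)-qu$ and $h_{q,s}(0)=0$, and I introduce the quotient $R(u):=\phi(u)/u=\tfrac{u}{2}+\tfrac{sg(u)}{u}$ for $u>0$. A positive global minimizer $u^*$ always satisfies $h_{q,s}(u^*)\le h_{q,s}(0)=0$, i.e.\ $R(u^*)\le q$. For the \emph{threshold} part I set $q_0=q_0(s):=\inf_{u>0}R(u)$, which equals the supremum appearing in the last part of Lemma \ref{lem:ass_q0}; then \eqref{ass_q0} holds with this $q_0$, and Lemma \ref{lem:ass_q0} delivers the claimed equivalence and the uniqueness for $|q|<q_0$, provided I show $q_0>0$. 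Since $R(u)\ge u/2$, the infimum can fail to be positive only through a sequence $u\searrow0$, so I only need $\liminf_{u\searrow0}R(u)>0$. Under \ref{assb:4c} one has $g\ge L'>0$ on some $(0,\delta)$, hence $R(u)\ge sL'/u\to+\infty$; under \ref{assb:4a} or \ref{assb:4b}, $g$ is concave near $0$ with $g(0^+)=0$ (if instead $g(0^+)>0$ we are already in the situation \ref{assb:4c}), so $g$ increases near $0$ with $g'(0^+)>0$ and $R(u)\to sg'(0^+)>0$. This gives $q_0(s)>0$ for every $s>0$.

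For the gap in the differentiable cases \ref{assb:4a}, \ref{assb:4b} I show that $\phi$ is strictly concave near $0$ for suitable $s$. Since $\phi''=1+sg''$, the assumption $\limsup_{u\searrow0}g''=a\in(-\infty,0)$ yields $g''\le a/2$ on some $(0,\delta)$ and hence $\phi''\le 1+sa/2<0$ as soon as $s>s_0:=-2/a$, whereas $g''\to-\infty$ forces $\phi''<0$ near $0$ for every $s>0$, so $s_0=0$ there. On the interval of concavity I consider $T(u):=\phi(u)-u\phi'(u)$, note $T'(u)=-u\phi''(u)>0$ and $T(0^+)=0$ (using $\phi(0^+)=0$ and $0\le u\phi'(u)\le\phi(u)\to0$), so $T>0$ on $(0,\delta)$. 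A global minimizer $u^*\in(0,\delta)$ would be a critical point, $\phi'(u^*)=q$, and would satisfy $\phi(u^*)\le qu^*=\phi'(u^*)u^*$, i.e.\ $T(u^*)\le0$ — a contradiction. Hence no minimizer lies in $(0,\delta)$ and I may take $u_0(s)=\delta$. The point of this substitution is that criticality eliminates $q$, so the argument is uniform in $q$.

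The genuinely delicate case is \ref{assb:4c}, where $g$ need not be differentiable and the bound $R(u^*)\le q$ is useless for large $q$. Here I exploit the monotonicity of $\prox_{sg}$ and the closedness of its graph recorded before Lemma \ref{lem:prox_sign}. By Assumption \ref{assb:2} and the coercivity of $h_{q,s}$ (indeed $h_{q,s}(u)\ge\tfrac12u^2-qu$), for $q$ large enough $h_{q,s}$ attains its minimum, and for such $q$ the minimal value is negative, so by Lemma \ref{lem:prox_sign} the minimizer is some $u'>0$; this fixes one pair $(q',u')\in\gph\prox_{sg}$ with $u'>0$. If positive global minimizers $u^*$ existed arbitrarily close to $0$, then for $u^*<u'$ monotonicity applied to $(q,u^*)$ and $(q',u')$ would force $q\le q'$, whence $R(u^*)\le q\le q'$; but $R(u)\ge sL'/u\to+\infty$ as $u\searrow0$, a contradiction. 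Thus the positive part of $\gph\prox_{sg}$ is bounded away from $0$, which yields $u_0(s)>0$ and $s_0=0$ in this case.

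I expect the last case to be the main obstacle: excluding small positive minimizers \emph{uniformly in $q$} without any derivative information. The resolution is the monotonicity comparison with a single fixed positive minimizer, which caps $q$ precisely in the regime where $u^*$ is small, so that the blow-up of $R$ near $0$ can be invoked. Collecting the three cases gives $s_0=-2/a$ under \ref{assb:4a} and $s_0=0$ under \ref{assb:4b} or \ref{assb:4c}, together with the threshold $q_0(s)>0$ valid for all $s>0$, as claimed; exclusion of minimizers in $(-u_0,0)$ follows by symmetry.
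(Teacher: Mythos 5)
Your proof is correct, and while it keeps the paper's overall architecture --- reduction to $q\ge 0$ via symmetry and Lemma \ref{lem:prox_sign}, a case split along \ref{assb:4a}--\ref{assb:4c}, and an appeal to Lemma \ref{lem:ass_q0} for the threshold $q_0$ --- the mechanism by which you exclude small positive minimizers in the differentiable cases is genuinely different. The paper argues by contradiction along a sequence of minimizers $u_n\searrow 0$, invoking the \emph{second-order} necessary condition $h_{q_n,s}''(u_n)\ge 0$ and passing to the limit to contradict \ref{assb:4a} or \ref{assb:4b}. You instead combine the first-order condition $\phi'(u^*)=q$ with the global comparison $h_{q,s}(u^*)\le h_{q,s}(0)=0$ to eliminate $q$, and show that $T(u)=\phi(u)-u\phi'(u)$ is strictly positive on the interval of strict concavity of $\phi$ via $T'=-u\phi''>0$ and $T(0^+)=0$. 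This is direct rather than sequential, yields an explicit $u_0(s)$ (the radius of strict concavity), and uses only one derivative of $g$ at the putative minimizer, at the price of a slightly more restrictive $s_0=-2/\limsup_{u\searrow 0}g''(u)$ versus the paper's $-1/\limsup_{u\searrow 0}g''(u)$ under \ref{assb:4a} --- immaterial, since only existence of $s_0$ is claimed. In case \ref{assb:4c} the two arguments coincide in substance: both use the monotonicity of $\prox_{sg}$ to cap $q$ when the minimizer is small (the paper through a monotone sequence $(q_n)$, you through comparison with a single fixed pair $(q',u')$ with $u'>0$) and then exploit the blow-up of $\phi(u)/u$ near $0$. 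Two of your side remarks deserve emphasis because they are load-bearing: the reduction of the subcase $g(0^+)>0$ under \ref{assb:4a}/\ref{assb:4b} to case \ref{assb:4c} is what makes $T(0^+)=0$ legitimate, and your choice $q_0=\inf_{u>0}\phi(u)/u$ is exactly the supremum required for the ``only if'' direction of Lemma \ref{lem:ass_q0}, which is marginally cleaner than the paper's explicit lower bound for that supremum.
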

\begin{proof}
	Assume that the first claim does not hold. Then there are sequences $(u_n)$ and $(q_n)$ and $s>0$ with $u_n \in \prox_{sg}(q_n)$ and $u_n \to 0$. W.l.o.g., $(u_n)$ is
	a monotonically decreasing sequence of positive numbers, and hence $(q_n)$ is monotonically decreasing and non-negative by Lemma \ref{lem:prox_sign}.
	Let $u$ and $q$ denote the limits of both sequences.
	Since $u_n\ne0$ is a global minimum of $h_{q_n,s}$, it follows $h_{q_n,s}(u_n)\le h_{q_n,s}(0)=0$.
	Passing to the limit in this inequality, we obtain $\liminf_{n\to\infty} h_{q_n,s}(u_n)\le 0$, which implies
	\[
	\liminf_{n\to\infty} g(u_n)\le 0.
	\]
	With $g(0)=0$ by \ref{assb:1}, this contradicts \ref{assb:4c}.
	Let now  \ref{assb:4a} or \ref{assb:4b} be satisfied.
	Then for $n$ sufficiently large the necessary second-order optimality condition 
	$h_{q_n,s}''(u_n)\ge0$ holds, and
	we obtain
	\[
	  \limsup_{n\to\infty} h_{q_n,s}''(u_n)\ge0,
	\]
	which implies
	\[
	1+s\limsup_{n\to\infty} g''(u_n)\ge0.
	\]
	This inequality is a contradiction to \ref{assb:4a} if  $s>{-1}/{\limsup_{u\searrow0}g''(u)}>0$ and to \ref{assb:4b} for all $s$.

	By \ref{assb:1}, it holds $\prox_{sg}(q) \ne \emptyset$ for all $q$.
	Due to \ref{assb:2} and Lemma \ref{lem:char_prox}, there is $q\ge0$ such that $0\not\in \prox_{sg}$.
	The claim concerning $q_0$ follows from Assumptions \ref{assb:5}, \ref{assb:4} and Lemma \ref{lem:ass_q0}.
	First, consider that case \ref{assb:4a} or \ref{assb:4b} is satisfied, i.e.,
	there is $\epsilon_1>0$ such that $g$ is strictly concave on
	$(0,\epsilon_1]$.
	By reducing $\epsilon_1$ if necessary, we get $g(\epsilon_1)>0$.
	Since $g(u)=0$, it holds $g(u) \ge \frac{g(\epsilon_1)}{\epsilon_1}  |u| $ for all $u\in (0,\epsilon_1)$ by concavity.
	Due to symmetry, this holds for all $u$ with $|u| \le \epsilon_1$.
	Since $g(u)\ge0$ for all $u$ by \ref{assb:5}, it holds $\frac12 u^2 + sg(u) \ge \frac{\epsilon_1}2 |u|$ for all $|u| \ge \epsilon_1$.
	This proves $\frac12 u^2 + sg(u) \ge \min(\frac{\epsilon_1}2,\frac{sg(\epsilon_1)}{\epsilon_1})|u|$ for all $u$.
	Hence, the claim follows with $q_0:= \min(\frac{\epsilon_1}2,\frac{sg(\epsilon_1)}{\epsilon_1})$ by Lemma \ref{lem:ass_q0}.
	Second, if \ref{assb:4c} is satisfied, then there are $\epsilon_2,\tau>0$ such that $g(u) \ge \tau$ for all $u$ with  $|u|\in(0,\epsilon_2)$ as $g$ is lower semicontinuous.
	Therefore, it holds $g(u) \ge \tau \ge \frac{\tau}{\epsilon_2}|u|$ if $|u|\in(0,\epsilon_2)$.
	The claim follows as above by  Lemma \ref{lem:ass_q0}.
\end{proof}

\begin{remark}
	\begin{enumerate}

		\item In general, the constant $u_0$ in Theorem \ref{thm:discontinuity} depends on $s$ and the structure of $g$.
		\item We note the second claim concerning $q_0$ in Theorem \ref{thm:discontinuity} holds for all $s>0$ and does not depend on the first claim due to Assumption \ref{assb:5}.
	One can replace $g(u)\ge0$ by the pre-requisite of Lemma \ref{lem:ass_q0}.

		\item Assumption \ref{ass_B} also allows functions of form $g(u)= \tilde q(u)+\delta_{D}(u)$ with some $\tilde g:\R\to\bar\R$ and  the indicator function $\delta_{D}$ of the set $D\subseteq\R$. This means the analysis includes constrained optimization problems, e.g., standard box constraints of form $$\min\limits_{u\in[a,b]} -qu+\frac{1}{2}u^2+s\tilde g(u), $$ with $a,b\in\R, a<b$.

	\end{enumerate}
\end{remark}

\begin{example}
	The proximal map of \eqref{P:min_h} with $g(u)=|u|_0$ is given by the hard-thresholding operator, defined by $$\prox_{sg}(q) = \begin{cases}
	0\quad &\text{if } |q|\leq \sqrt{2s},\\
	q &\text{else}.
	\end{cases} $$
\end{example}

With the above considerations in mind, let us discuss the minimization problem
\begin{equation}\label{PGscal} \min\limits_{u\in\R}
	g_ku+\frac{L}{2}(u-u_k)^2 + g(u).\end{equation}
This minimization corresponds to the pointwise minimization of the integrand in \eqref{eq101}.

\begin{corollary} \label{lem:prototype}
	Let  $ g_k, u_k\in \R,\enspace L>0$ be given. Then the number $u\in\R$ is a solution to \eqref{PGscal} if and only if
	$$u\in\prox_{L^{-1}g}\left(\frac{Lu_k-g_k}{L}\right).		$$
	If $\frac{1}{L}>s_0$, see Theorem \ref{thm:discontinuity}, then
	all global solutions u satisfy $$u=0\, \text{ or }\, |u|\geq u_0(L^{-1})$$ with some  $u_0(L^{-1})>0$ as in Theorem \ref{thm:discontinuity}.  
\end{corollary}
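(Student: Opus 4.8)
The plan is to reduce the scalar minimization \eqref{PGscal} directly to the proximal minimization problem \eqref{P:min_h}, whose solution set is by definition the prox map, and then invoke Theorem \ref{thm:discontinuity} for the second claim. The entire equivalence is algebraic: I expect it to amount to completing the square and tracking which terms depend on $u$.

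First I would denote the objective of \eqref{PGscal} by
\[
\Phi(u) := g_k u + \frac{L}{2}(u - u_k)^2 + g(u),
\]
and expand the quadratic term to obtain
\[
\Phi(u) = \frac{L}{2}u^2 - (Lu_k - g_k)\,u + \frac{L}{2}u_k^2 + g(u).
\]
Setting $s := L^{-1}$ and $q := (Lu_k - g_k)/L$, I would complete the square in the first three terms and check the identity
\[
\Phi(u) = L\left(\frac{1}{2}(u-q)^2 + s\,g(u)\right) + c,
\]
where $c = \tfrac{L}{2}u_k^2 - \tfrac{L}{2}q^2$ is a constant independent of $u$. Since $L>0$ and $c$ does not depend on $u$, multiplying by a positive scalar and adding a $u$-independent constant leaves the argmin unchanged. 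Hence the minimizers of $\Phi$ coincide exactly with the minimizers of $u\mapsto \tfrac{1}{2}(u-q)^2 + s\,g(u)$, which is by definition $\prox_{sg}(q)=\prox_{L^{-1}g}\!\left(\frac{Lu_k-g_k}{L}\right)$. This establishes the asserted equivalence.

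For the second claim, I would observe that with $s=L^{-1}$ the hypothesis $L^{-1}>s_0$ is precisely the condition $s>s_0$ appearing in Theorem \ref{thm:discontinuity}. By the identity just established, the solution set of \eqref{PGscal} equals $\prox_{L^{-1}g}(q)$, and this prox map is exactly the set of global minimizers of \eqref{P:min_h} with parameter $s=L^{-1}$ and this particular $q$. Theorem \ref{thm:discontinuity} then guarantees the existence of $u_0(L^{-1})>0$ such that every such global minimizer satisfies $u=0$ or $|u|\ge u_0(L^{-1})$, which is the stated dichotomy.

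I do not expect a genuine obstacle here, since this is a direct corollary of Theorem \ref{thm:discontinuity}: the only points requiring care are confirming that the additive term $c$ is truly independent of $u$ and that scaling the objective by the positive factor $L$ preserves the set of minimizers, so that the translation between \eqref{PGscal} and \eqref{P:min_h} is exact rather than merely up to approximation.
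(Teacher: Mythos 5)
Your proposal is correct and follows essentially the same route as the paper: the paper likewise divides the objective of \eqref{PGscal} by $L$ to recognize it as an instance of \eqref{P:min_h} with $s=L^{-1}$ and $q=(Lu_k-g_k)/L$, so that the solution set is $\prox_{L^{-1}g}(q)$ by definition, and then cites Theorem \ref{thm:discontinuity} for the dichotomy. Your version merely makes the completing-the-square bookkeeping explicit, which the paper leaves implicit.
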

\begin{proof} Problem \eqref{PGscal} is equivalent to $$\min\limits_{u\in \R} \frac{g_k-Lu_k}{L} u+\frac{1}{2}u^2+\frac{1}{L} g(u)$$ and
therefore of form \eqref{P:min_h}. The claim follows by definition and from Theorem \ref{thm:discontinuity}.
\end{proof}

\section{Analysis of the proximal gradient algorithm}\label{section:PG}
In this section, we will analyze the proximal gradient algorithm.

\begin{algorithm}[Proximal gradient algorithm] \label{alg:PG} Choose $L>0$ and $u_0\in L^2(\Omega)$. Set $k=0$.
	\begin{enumerate}
		\item Compute $u_{k+1}$ as solution of
		\begin{equation}\label{PG:subP} \min\limits_{u\in L^2(\Omega)}  f(u_k)+\nabla f(u_k)(u-u_k)+\frac{L}{2}\|u-u_k\|^2_{L^2(\Omega)}+j(u).\end{equation}
		\item Set $k:=k+1$, repeat.
\end{enumerate} \end{algorithm}

The functional to be minimized in \eqref{PG:subP} can be written as an integral functional.
\color{black}
In this representation the minimization can be carried out pointwise by using the previous results.
The following statements are generalizations of \cite[Lemmas 3.10, 3.11, Theorem 3.12]{DW:1},
and the corresponding proofs can be carried over easily.

\begin{lemma} \label{lem:infdimsetting}Let $u_k\in U_{ad}$ be given. Then
	\begin{equation} \label{subP2}
	\min\limits_{u\in L^2(\Omega)} f(u_k)+\nabla f(u_k)(u-u_k) + \frac{L}{2}\|u-u_k\|^2_{L^2(\Omega)} +\int_\Omega g(u(x)) \dx
	\end{equation}
	is solvable, and $u_{k+1}\in L^2(\Omega)$ is a global solution if and only if
	\begin{equation}\label{sol-cond}
	u_{k+1}(x)\in \prox_{L^{-1}g}\left(\frac{1}{L}(Lu_k(x)-\nabla f(u_k)(x))\right)\enspace f.a.a. \enspace x\in\Omega.
	\end{equation}
\end{lemma}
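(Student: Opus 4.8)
The plan is to reduce the infinite-dimensional minimization in \eqref{subP2} to a family of pointwise scalar problems of the form \eqref{PGscal} and then transfer optimality between the two levels. First I would absorb $f(u_k)$ and the terms not depending on $u$ into a constant $c$ and combine the three integrals into a single integral functional
\[
J(u) = c + \int_\Omega \left( \frac{L}{2}(u(x) - u_k(x))^2 + \nabla f(u_k)(x)(u(x) - u_k(x)) + g(u(x)) \right) \dx,
\]
whose integrand, for fixed $x$, agrees up to an additive constant with the objective in \eqref{PGscal} when one takes the scalar data $g_k := \nabla f(u_k)(x)$ and uses $u_k(x)$ in place of the scalar $u_k$. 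By Corollary \ref{lem:prototype}, the set of pointwise minimizers of this integrand is exactly $\prox_{L^{-1}g}\big(\frac{1}{L}(Lu_k(x) - \nabla f(u_k)(x))\big)$, which is the set appearing in \eqref{sol-cond}.

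Next, for the equivalence I would argue in two directions. For the ``if'' part, suppose $u_{k+1}\in L^2(\Omega)$ satisfies \eqref{sol-cond}. Then for almost every $x$ and every competitor $u\in L^2(\Omega)$ the pointwise optimality of $u_{k+1}(x)$ gives that the integrand at $u_{k+1}(x)$ does not exceed the integrand at $u(x)$; integrating this pointwise inequality yields $J(u_{k+1}) \le J(u)$, where the estimate is trivial whenever $J(u) = +\infty$. For the ``only if'' part, suppose $u_{k+1}$ is a global minimizer but violates \eqref{sol-cond} on a set $A$ of positive measure. Using a measurable selection of the pointwise minimizers on $A$ and leaving $u_{k+1}$ unchanged outside $A$, I would construct a competitor with strictly smaller integral, contradicting global optimality.

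Finally, for solvability I would invoke a measurable selection theorem to choose a measurable $u_{k+1}$ with $u_{k+1}(x) \in \prox_{L^{-1}g}(q(x))$, where $q := \frac{1}{L}(Lu_k - \nabla f(u_k))$. Here $q \in L^2(\Omega)$ because $u_k \in L^2(\Omega)$ and, by Assumption \ref{ass_A}, $\nabla f(u_k) \in L^2(\Omega)$. The a priori growth estimate $|u_{k+1}(x)| \le 2|q(x)|$ from Lemma \ref{lem_prox_growth} then guarantees $u_{k+1}\in L^2(\Omega)$, and combined with the pointwise characterization this yields a global solution.

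The main obstacle I expect is the measurable-selection step, which underpins both the existence assertion and the ``only if'' direction: one must verify that the multifunction $x \mapsto \prox_{L^{-1}g}(q(x))$ is measurable with nonempty closed values and admits a selection lying in $L^2(\Omega)$. This is precisely where the normal-integrand structure (so that $x \mapsto g(u(x))$ is measurable and the graph of $\prox_{sg}$ is closed) and the bound from Lemma \ref{lem_prox_growth} are essential. The surrounding steps are routine interchange-of-integral-and-minimum arguments, which is why the authors remark that the proof can be carried over easily from \cite{DW:1}.
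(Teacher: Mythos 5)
Your proposal is correct and follows essentially the same route as the paper: measurability of $x\mapsto\prox_{L^{-1}g}(q(x))$ via the closed graph (outer semicontinuity) of the prox map, a measurable selection to produce a solution, the growth bound of Lemma \ref{lem_prox_growth} to place it in $L^2(\Omega)$, and the standard pointwise-versus-integral minimization argument for the equivalence. The paper's proof is just a condensed version of exactly these steps, citing Rockafellar--Wets for the measurability and selection results.
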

\begin{proof}

Let us show, that we can choose a measurable function satisfying the inclusion \eqref{sol-cond}.
The set-valued mapping $\prox_{L^{-1}g}$ has closed graph and is thus outer semicontinuous. Then by \cite[Corollary 14.14]{var_ana:rockafellar},
the set-valued mapping $x\mapsto \prox_{L^{-1}g}\left(\frac{1}{L}(Lu_k(x)-\nabla f(u_k)(x))\right)$ is measurable.
A well-known result \cite[Corollary 14.6]{var_ana:rockafellar} implies the existence of a measurable function $u$ such that $u(x)\in \prox_{L^{-1}g}\left(\frac{1}{L}(Lu_k(x)-\nabla f(u_k)(x))\right)$
for almost all $x\in \Omega$.
Due to the growth condition of Lemma \ref{lem_prox_growth}, we have $u\in L^2(\Omega)$, and hence $u$ solves  \eqref{subP2}.
If $u_{k+1}$ solves \eqref{subP2} then \eqref{sol-cond} follows by a standard argument.
\end{proof}

We introduce the following notation. For a sequence $(u_k)\subset L^2(\Omega)$ define
$$I_k :=\{x\in\Omega:\enspace u_k(x)\not=0\}, \enspace\chi_k:=\chi(u_k)=\chi_{I_k}.$$
Let us now investigate convergence properties of Algorithm \ref{alg:PG}. The following Lemma will be helpful for what follows.

\begin{lemma}\label{lem:consec} Assume $\frac{1}{L}>s_0$ with $s_0$ from Theorem \ref{thm:discontinuity}. Let $u_k, u_{k+1}\in L^2(\Omega)$ be  consecutive iterates of Algorithm \textnormal{(\ref{alg:PG}).} Then $$\|u_{k+1}-u_k\|^p_{L^p(\Omega)}\geq u_0^p\|\chi_k-\chi_{k+1}\|_{L^1(\Omega)}$$
	holds for $p\in[1,\infty)$,  where $u_0:=u_0(L^{-1})$ is as in Theorem \ref{thm:discontinuity}.
\end{lemma}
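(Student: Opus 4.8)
The plan is to reduce the claimed estimate to a pointwise inequality on the symmetric difference of the active sets $I_k$ and $I_{k+1}$, exploiting the sparsity dichotomy that the step-size condition $1/L>s_0$ forces on each iterate.

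First I would rewrite the right-hand side. Since $\chi_k$ and $\chi_{k+1}$ take values in $\{0,1\}$, the integrand $|\chi_k(x)-\chi_{k+1}(x)|$ equals $1$ exactly when precisely one of the two indicators is $1$, i.e., on the symmetric difference $I_k\triangle I_{k+1}=(I_k\setminus I_{k+1})\cup(I_{k+1}\setminus I_k)$, and it vanishes elsewhere. Hence
\[
\|\chi_k-\chi_{k+1}\|_{L^1(\Omega)} = |I_k\triangle I_{k+1}|.
\]

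The key step is the pointwise lower bound. Because $1/L>s_0$, Corollary \ref{lem:prototype} (which combines Lemma \ref{lem:infdimsetting} with Theorem \ref{thm:discontinuity} for $s=L^{-1}$) guarantees that both proximal outputs satisfy, for almost all $x$, the alternative $u(x)=0$ or $|u(x)|\ge u_0$, with $u_0:=u_0(L^{-1})$. On $I_k\triangle I_{k+1}$ exactly one of $u_k(x)$, $u_{k+1}(x)$ is zero and the other is nonzero, so by the dichotomy the nonzero one has modulus at least $u_0$. In both cases $|u_{k+1}(x)-u_k(x)|\ge u_0$, and therefore $|u_{k+1}(x)-u_k(x)|^p\ge u_0^p$ for almost all $x\in I_k\triangle I_{k+1}$ and every $p\in[1,\infty)$.

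Finally I would discard the nonnegative integrand off the symmetric difference and insert the pointwise bound:
\[
\|u_{k+1}-u_k\|^p_{L^p(\Omega)} \ge \int_{I_k\triangle I_{k+1}} |u_{k+1}(x)-u_k(x)|^p\dx \ge u_0^p\,|I_k\triangle I_{k+1}| = u_0^p\,\|\chi_k-\chi_{k+1}\|_{L^1(\Omega)},
\]
which is the assertion. The only real subtlety, and the reason the hypothesis $1/L>s_0$ is indispensable, is that the dichotomy must hold for \emph{both} iterates simultaneously: on $I_k\setminus I_{k+1}$ the surviving nonzero value is $u_k(x)$, whereas on $I_{k+1}\setminus I_k$ it is $u_{k+1}(x)$, so a one-sided sparsity bound would not suffice to keep $|u_{k+1}-u_k|$ uniformly away from zero across the whole symmetric difference.
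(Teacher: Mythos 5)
Your proof is correct and follows essentially the same route as the paper: both reduce the estimate to the symmetric difference $I_k\triangle I_{k+1}$, invoke Corollary \ref{lem:prototype} to get the pointwise bound $|u_{k+1}(x)-u_k(x)|\ge u_0$ there, and integrate. The subtlety you flag --- that the dichotomy is needed for both iterates --- is also implicit in the paper's proof.
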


 \begin{proof}
 	Since $ u_k(x) \not= 0$ and $u_{k+1}(x)=0$ on  $I_k\setminus I_{k+1}$, it holds $|u_{k+1}(x)-u_k(x)|\geq u_0$ for all $x\in I_k\setminus I_{k+1}$ by Corollary (\ref{lem:prototype}).
 	 Hence,
 	\begin{multline*}
 	\|u_{k+1}-u_k\|^p_{L^p(\Omega)} = \int_\Omega|u_{k+1}(x)-u_k(x)|^p \dx\\
 	\geq\int_{(I_k\setminus I_{k+1})\cup (I_{k+1}\setminus I_k)}|u_{k+1}(x)-u_k(x)|^p \dx
 	\geq u_0^p\|\chi_{k+1}-\chi_k\|_{L^1(\Omega)}.
 	\end{multline*}
\end{proof}

\begin{theorem}\label{thm:conv} For $L>L_f$ let $(u_k)$ be a sequence of iterates generated by Algorithm \ref{alg:PG}. Then the following statements hold:
	\begin{enumerate}[label=(\roman*)]
		\item The sequence $(f(u_k)+j(u_k))$ is monotonically decreasing and converging.
		\item  The sequences $(u_k)$ and $(\nabla f(u_k))$ are bounded in $L^2(\Omega)$ if $f+j$ is weakly coercive on $L^2(\Omega)$, i.e., $f(u)+j(u)\to\infty$ as
		$\|u_k\|_{L^2(\Omega)}\to\infty$.
		\item $\|u_{k+1}-u_k\|_{L^2(\Omega)} \to 0$.
		\item Let $s_0$ be as in Theorem \ref{thm:discontinuity}.
		Assume $\frac{1}{L} > s_0$.
		Then the sequence of characteristic functions $(\chi_k)$ is converging in $L^1(\Omega)$  and pointwise a.e.\@  to some characteristic function $\chi$.
	\end{enumerate}
\end{theorem}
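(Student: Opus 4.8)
The plan is to prove the four statements in order, since each feeds into the next. For item (i), the key is the standard descent estimate for proximal gradient methods. Since $\nabla f$ is Lipschitz with constant $L_f$, the descent lemma gives
\[
f(u_{k+1}) \le f(u_k) + \nabla f(u_k)(u_{k+1}-u_k) + \frac{L_f}{2}\|u_{k+1}-u_k\|_{L^2(\Omega)}^2.
\]
On the other hand, $u_{k+1}$ is a global minimizer of the subproblem \eqref{PG:subP}, so comparing its objective value against the admissible choice $u=u_k$ yields
\[
\nabla f(u_k)(u_{k+1}-u_k) + \frac{L}{2}\|u_{k+1}-u_k\|_{L^2(\Omega)}^2 + j(u_{k+1}) \le j(u_k).
\]
Adding these and rearranging produces
\[
f(u_{k+1}) + j(u_{k+1}) \le f(u_k) + j(u_k) - \frac{L-L_f}{2}\|u_{k+1}-u_k\|_{L^2(\Omega)}^2.
\]
Because $L > L_f$, the coefficient is positive, so the sequence $(f(u_k)+j(u_k))$ is monotonically decreasing; since $f$ is bounded below (Assumption \ref{ass_A}) and $g\ge 0$ (so $j\ge 0$), it is bounded below and hence converges.

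For item (ii), weak coercivity of $f+j$ together with the boundedness of $(f(u_k)+j(u_k))$ immediately forces $(u_k)$ to be bounded in $L^2(\Omega)$; the bound on $(\nabla f(u_k))$ then follows from Lipschitz continuity of $\nabla f$, writing $\|\nabla f(u_k)\| \le \|\nabla f(0)\| + L_f\|u_k\|$. For item (iii), I would sum the descent inequality from item (i) over $k$: the telescoping left-hand side is bounded by the (finite) initial value minus the limit, so
\[
\frac{L-L_f}{2}\sum_{k=0}^{\infty}\|u_{k+1}-u_k\|_{L^2(\Omega)}^2 \le f(u_0)+j(u_0) - \lim_{k\to\infty}\big(f(u_k)+j(u_k)\big) < \infty,
\]
which gives convergence of the series and hence $\|u_{k+1}-u_k\|_{L^2(\Omega)}\to 0$.

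The substantive part is item (iv), and this is where the sparsity-promoting structure enters. The strategy is to use Lemma \ref{lem:consec}: with $\frac1L > s_0$, Theorem \ref{thm:discontinuity} supplies a uniform gap $u_0>0$, and Lemma \ref{lem:consec} gives (taking $p=2$)
\[
\|\chi_k-\chi_{k+1}\|_{L^1(\Omega)} \le u_0^{-2}\|u_{k+1}-u_k\|_{L^2(\Omega)}^2,
\]
so $\sum_k \|\chi_{k+1}-\chi_k\|_{L^1(\Omega)} < \infty$ by item (iii). This makes $(\chi_k)$ a Cauchy sequence in $L^1(\Omega)$, hence convergent in $L^1(\Omega)$ to some limit $\chi$; the summability of consecutive $L^1$-differences is exactly the hypothesis needed to extract an a.e.-convergent subsequence whose limit agrees with $\chi$, and since each $\chi_k$ takes values in $\{0,1\}$, the pointwise limit is again $\{0,1\}$-valued, i.e.\@ a characteristic function. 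I expect the main obstacle to be the bookkeeping in item (iv): one must be careful to distinguish $L^1$-convergence (which is direct from summability) from genuine pointwise a.e.\@ convergence of the full sequence. The cleanest route is to invoke that absolute summability of $\|\chi_{k+1}-\chi_k\|_{L^1}$ yields a function in $L^1$ dominating $\sum_k|\chi_{k+1}-\chi_k|$, so the tail sums vanish pointwise a.e., forcing $\chi_k(x)$ to be eventually constant for a.e.\@ $x$ and therefore pointwise convergent, not merely along a subsequence.
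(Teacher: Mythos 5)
Your proof is correct and follows essentially the same route as the paper: the descent lemma plus optimality of $u_{k+1}$ for (i), coercivity and Lipschitz continuity for (ii), telescoping for (iii), and Lemma \ref{lem:consec} with $p=2$ combined with a Fatou-type summability argument for (iv). Your elaboration of the pointwise a.e.\@ convergence in (iv) (finiteness of $\sum_k|\chi_{k+1}(x)-\chi_k(x)|$ a.e.\@ forcing $\chi_k(x)$ to be eventually constant) is exactly what the paper's brief appeal to Fatou's Lemma is meant to deliver.
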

\begin{proof}
\textit{(i)} Due to the Lipschitz continuity of $\nabla f$ it holds $$f(u_{k+1})\leq f(u_k)+\nabla f(u_k)(u_{k+1}-u_k)+\frac{L_f}{2}||u_{k+1}-u_k||^2_{L^2(\Omega)}.$$
Using the optimality of $u_{k+1}$, we find that the inequality
\begin{equation}\label{monotonicity}\begin{split}
f(u_{k+1})+j(u_{k+1})
&\leq  f(u_k)+j(u_k)-\frac{L-L_f}{2}\|u_{k+1}-u_k\|^2_{L^2(\Omega)} \end{split}\end{equation}
holds..
Hence, $(f(u_k)+j(u_k))$ is decreasing. Convergence follows because $f$ and $j$ are bounded from below.

\textit{(ii)}
Weak coercivity of the functional implies that $(u_k)$ is bounded. Furthermore, because of \begin{align*}\|\nabla f(u_k)\|_{L^2(\Omega)}&\leq\|\nabla f(u_k)-\nabla f(0)\|_{L^2(\Omega)}+\|\nabla f(0)\|_{L^2(\Omega)}\\&\leq L_f\|u_k\|_{L^2(\Omega)}+\|\nabla f(0)\|_{L^2(\Omega)},\end{align*} boundedness of $(\nabla f(u_k))$ in $L^2(\Omega)$ follows.

\textit{(iii)} Summation over $k=1,\dots,n$ in \eqref{monotonicity} yields

\begin{equation*}
\sum\limits_{k=1}^n (f(u_{k+1})+j(u_{k+1}))\leq\sum\limits_{k=1}^n \left(  f(u_k)+j(u_k)  -  \frac{L-L_f}{2}\|u_{k+1}-u_k\|^2_{L^2(\Omega)}\right)
\end{equation*} and hence
$$  f(u_{n+1})+j(u_{n+1}) +\sum\limits_{k=1}^n \frac{L-L_f}{2}\|u_{k+1}-u_k\|^2_{L^2(\Omega)}\leq f(u_1)+j(u_1)<\infty.  $$
Letting $n\to\infty$ implies $\sum\limits_{k=1}^\infty\|u_{k+1}-u_k\|^2_{L^2(\Omega)}<\infty$ and therefore $\|u_{k+1}-u_k\|_{L^2(\Omega)}\to 0$.

\textit{(iv)} By Lemma \ref{lem:consec}, we get
\begin{equation*}
\frac{L-L_f}{2}u_0^2\sum\limits_{k=1}^\infty\|\chi_k-\chi_{k+1}\|_{L^1(\Omega)}
\leq \frac{L-L_f}{2}\sum\limits_{k=1}^\infty\|u_k-u_{k+1}\|_{L^2(\Omega)} < +\infty
\end{equation*}
Hence, $(\chi_k)$ is a Cauchy sequence in $L^1(\Omega)$, and therefore also converging in $L^1(\Omega)$, i.e., $\chi_k\to\chi$ for some characteristic function $\chi$.
Pointwise a.e.\@ convergence  of $(\chi_k)$ can be proven by Fatou's Lemma.

\end{proof}

As a consequence, we get the following result.
\begin{corollary} Suppose $\frac{1}{L}>s_0$.
	Then for any weak sequential limit point $u^*\in L^2(\Omega)$ of iterates $(u_k)$ of Algorithm \ref{alg:PG} it holds $$(1-\chi)u^*=0$$ almost everywhere in $\Omega$. Here, $\chi$ is as in Theorem \ref{thm:conv}.
\end{corollary}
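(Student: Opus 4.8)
The plan is to exploit the elementary pointwise identity $(1-\chi_k)u_k = 0$, valid almost everywhere for every $k$, and then pass to the limit along the weakly convergent subsequence. Since $\chi_k=\chi_{I_k}$ is exactly the indicator of the set $I_k=\{u_k\neq0\}$, the factor $1-\chi_k$ vanishes wherever $u_k$ does not, so $(1-\chi_k)u_k=0$ holds a.e.\ in $\Omega$ for each $k$. The goal is therefore to show that this identity survives the limit and yields $(1-\chi)u^*=0$.

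First I would fix a subsequence $(u_{k_n})$ with $u_{k_n}\rightharpoonup u^*$ in $L^2(\Omega)$; in particular $(u_{k_n})$ is bounded. By Theorem \ref{thm:conv}(iv), which applies because $1/L>s_0$, the associated characteristic functions satisfy $\chi_{k_n}\to\chi$ in $L^1(\Omega)$ and pointwise a.e.\ Testing the identity $(1-\chi_{k_n})u_{k_n}=0$ against an arbitrary $\phi\in L^2(\Omega)$ and splitting the integral gives
\[
0 = \int_\Omega u_{k_n}\phi\dx - \int_\Omega \chi_{k_n} u_{k_n}\phi\dx .
\]
The first term converges to $\int_\Omega u^*\phi\dx$ by weak convergence.

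For the second term, the key observation is that although $u_{k_n}$ converges only weakly, the factor $\chi_{k_n}\phi$ converges \emph{strongly} in $L^2(\Omega)$: indeed $\chi_{k_n}\phi\to\chi\phi$ pointwise a.e.\ and $|\chi_{k_n}\phi|\le|\phi|\in L^2(\Omega)$, so dominated convergence applies. Pairing a strongly convergent sequence with a weakly convergent bounded sequence passes to the limit, which gives $\int_\Omega \chi_{k_n}u_{k_n}\phi\dx \to \int_\Omega \chi u^*\phi\dx$. Combining the two limits yields $\int_\Omega (1-\chi)u^*\phi\dx = 0$ for all $\phi\in L^2(\Omega)$, hence $(1-\chi)u^*=0$ almost everywhere.

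I expect the only genuine obstacle to be the passage to the limit in the bilinear term $\int_\Omega \chi_{k_n}u_{k_n}\phi\dx$, since the naive product of two sequences that converge only weakly need not converge to the product of the limits. This is resolved precisely by the strong $L^2$-convergence of $\chi_{k_n}\phi$ supplied by dominated convergence, which in turn rests on the pointwise a.e.\ convergence of $(\chi_k)$ established in Theorem \ref{thm:conv}(iv)---the step where the sparsity-promoting hypothesis $1/L>s_0$ is essential. Everything else is routine.
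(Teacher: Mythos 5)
Your proof is correct. The paper itself only cites \cite[Thm.~3.15]{DW:1} for this corollary, and your argument --- exploiting $(1-\chi_{k_n})u_{k_n}=0$ a.e.\ and passing to the limit by pairing the weakly convergent $u_{k_n}$ with the strongly $L^2$-convergent test functions $\chi_{k_n}\phi$ (dominated convergence from the pointwise a.e.\ convergence of $\chi_{k_n}$ in Theorem \ref{thm:conv}(iv)) --- is exactly the standard weak--strong argument underlying that reference, with the added benefit of being self-contained here.
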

\begin{proof}
See \cite[Thm.3.15]{DW:1}.
\end{proof}

\begin{corollary} \label{cor315}
Let $(u_k)$ be a sequence of iterates generated by Algorithm \ref{alg:PG}.
Then $u_{k+1}-u_k\to 0$ pointwise almost everywhere on $\Omega$.
\end{corollary}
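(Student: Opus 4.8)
The plan is to upgrade the $L^2$-convergence $\|u_{k+1}-u_k\|_{L^2(\Omega)}\to0$ from Theorem \ref{thm:conv} to pointwise a.e.\ convergence, by exploiting the stronger \emph{summability} estimate already contained in the proof of that theorem. Recall that the summation argument establishing part (iii) of Theorem \ref{thm:conv} actually shows, in the regime $L>L_f$, that
\[
\sum_{k=1}^\infty \|u_{k+1}-u_k\|^2_{L^2(\Omega)} < \infty.
\]
This is precisely the extra information needed: mere norm convergence to zero does not force pointwise a.e.\ convergence, but square-summability of the norms does.

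First I would set $v_k := u_{k+1}-u_k$ and regard the above as $\sum_{k=1}^\infty \int_\Omega |v_k(x)|^2\dx < \infty$. Since each integrand $|v_k(x)|^2$ is nonnegative and measurable, the monotone convergence theorem (equivalently, Tonelli's theorem applied termwise to the series) justifies interchanging the sum and the integral, giving
\[
\int_\Omega \sum_{k=1}^\infty |v_k(x)|^2 \dx = \sum_{k=1}^\infty \int_\Omega |v_k(x)|^2 \dx = \sum_{k=1}^\infty \|v_k\|^2_{L^2(\Omega)} < \infty.
\]
Because the integral on the left is finite, the nonnegative function $x\mapsto \sum_{k=1}^\infty |v_k(x)|^2$ is finite for almost every $x\in\Omega$. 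For each such $x$ the numerical series $\sum_k |v_k(x)|^2$ converges, hence its general term tends to zero, i.e.\ $|u_{k+1}(x)-u_k(x)|^2\to0$ and therefore $u_{k+1}(x)-u_k(x)\to0$. This yields the claim for the full sequence, with no subsequence extraction or weak-limit-point argument required.

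I do not expect a genuine obstacle here: the statement is essentially the standard fact that an $\ell^2$-summable sequence of $L^2$-norms forces pointwise a.e.\ convergence to zero. The only step deserving a word of care is the interchange of summation and integration, which is immediate from the monotone convergence theorem thanks to the nonnegativity of the summands; no dominated-convergence hypotheses are needed. It is worth flagging explicitly that the result relies on the square-\emph{summability} of the increments, not merely on $\|u_{k+1}-u_k\|_{L^2(\Omega)}\to0$, so the proof must cite the summation estimate from the proof of Theorem \ref{thm:conv}(iii) rather than its bare conclusion.
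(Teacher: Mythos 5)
Your proof is correct and is essentially the paper's own argument: both rest on the square-summability $\sum_k\|u_{k+1}-u_k\|_{L^2(\Omega)}^2<\infty$ established in the proof of Theorem \ref{thm:conv}(iii), pass the sum inside the integral (you via monotone convergence/Tonelli, the paper via Fatou's lemma, which are interchangeable for nonnegative partial sums), and conclude that $\sum_k|u_{k+1}(x)-u_k(x)|^2$ is finite a.e., so its general term vanishes. Your explicit remark that the summability, not merely the norm convergence, is the needed input is exactly the right point of care.
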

\begin{proof}
 By the Lemma of Fatou, we have
 \[
  \int_\Omega \liminf_{n\to\infty} \sum_{k=0}^n |u_{k+1}(x)-u_k(x)|^2 \dx\le  \liminf_{n\to\infty} \sum_{k=0}^n \|u_{k+1}(x)-u_k(x)\|_{L^2(\Omega)}^2<+\infty.
 \]
This implies $\sum_{k=0}^n |u_{k+1}(x)-u_k(x)|^2<\infty$ for almost all $x\in \Omega$, and the claim follows.
\end{proof}

\color{black}

\subsection{Stationarity conditions for weak limit points from inclusions}

Under a weak coercivity assumption Theorem \ref{thm:conv} implies that Algorithm \ref{alg:PG} generates a sequence $(u_k)$ with  weak limit point $u^*\in L^2(\Omega)$. Due to the lack of weak lower semicontinuity in the term $u\mapsto \int_\Omega g(u)\dx$, however, we cannot conclude
anything about the value
of the objective functional in a weak limit point. Unfortunately, we are not able to show $$f(u^*)+j(u^*)\leq \lim\limits_{k\to\infty} f(u_k)+j(u_k),$$ as it was done in {\cite[Thm. 3.14]{DW:1}} for the special choice $g(u):= |u|_0$.
Nevertheless, by using results of set-valued analysis we will show that a weak limit point of a sequence $(u_k)$ of iterates satisfies a certain inclusion in almost every point $x\in\Omega$, which can be interpreted as a pointwise stationary condition for weak limit points.

By definition, the iterates satisfy the inclusion
\[
u_{k+1}(x)\in \prox_{L^{-1}g}\left(\frac{1}{L}(Lu_k(x)-\nabla f(u_k)(x))\right)
\]
for almost all $x\in \Omega$, see e.g., \eqref{sol-cond}.
However, this inclusion seems to be useless for a convergence analysis as the function $u_{k+1}$ to the left of the inclusion
as well as the arguments $Lu_k-\nabla f(u_k)$ only have weakly converging subsequences at best.
The idea is to construct a set-valued mapping $\G:\R\rightrightarrows\R$, such that a solution $u_{k+1}$ of
\eqref{subP2} satisfies the inclusion
\begin{equation}\label{eq38}
u_{k+1}(x) \in \G(z_k(x))
\end{equation}
in almost every point $x\in\Omega$ for some $z_k\in L^2(\Omega)$, where $(z_k)$ converges strongly or pointwise almost everywhere.
Here, we will use
\[
z_k:=-\big(\nabla f(u_k)+L(u_{k+1}-u_k)\big).
\]
By Theorem \ref{thm:conv}, we have $u_{k+1}-u_k\to0$ in $L^2(\Omega)$ and pointwise almost everywhere.
With the additional assumption that subsequences of $(\nabla f(u_k))$ are converging pointwise almost everywhere,
the argument of the set-valued mapping is converging pointwise almost everywhere.
In the context of optimal control problems, such an assumption is not a severe restriction.
So there is a chance to pass to the limit in the inclusion \eqref{eq38}.
\color{black}

\begin{lemma}\label{lem:stat_points} Let $u_{k+1}$ be a solution of \eqref{subP2}. Then
	\begin{equation*}
	u_{k+1}(x) \in \G(z_k(x))\enspace f.a.a.\enspace x\in \Omega,
	\end{equation*}
	where the set-valued mapping $\mathcal G:\R\rightrightarrows\R$ is given by

$$u\in \mathcal G(z):= \G_{L}(z):\Longleftrightarrow u=\argmin\limits_{v\in \R} -zv +\frac{L}{2}(v-u)^2+  g(v).$$
\end{lemma}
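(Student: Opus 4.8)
The plan is to reduce the inclusion $u_{k+1}(x)\in\G(z_k(x))$ to the pointwise optimality characterization already established in Lemma \ref{lem:infdimsetting}, by verifying that the minimization problem defining $\G(z_k(x))$ coincides, up to an additive constant independent of the optimization variable, with the pointwise proximal-gradient subproblem \eqref{PGscal}. The self-referential quadratic $\frac{L}{2}(v-u)^2$ in the definition of $\G$ is exactly what makes this work: once the specific choice $z_k=-(\nabla f(u_k)+L(u_{k+1}-u_k))$ is inserted and $u=u_{k+1}(x)$ is substituted into that quadratic, the linear-in-$v$ terms involving $u_{k+1}$ cancel and one recovers the original subproblem.

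Concretely, I would fix $x\in\Omega$, abbreviate $u_k=u_k(x)$, $u_{k+1}=u_{k+1}(x)$, $z_k=z_k(x)$, $\nabla f(u_k)=\nabla f(u_k)(x)$, and expand the objective defining $\G(z_k)$ evaluated at the point $u=u_{k+1}$,
\[
\psi(v) := -z_k v+\frac{L}{2}(v-u_{k+1})^2+g(v).
\]
Substituting $z_k=-(\nabla f(u_k)+L(u_{k+1}-u_k))$ and collecting the terms linear in $v$, the contribution $Lu_{k+1}v$ coming from $-z_k v$ and the contribution $-Lu_{k+1}v$ coming from the expansion of the quadratic cancel, leaving
\[
\psi(v)=\frac{L}{2}v^2+\big(\nabla f(u_k)-Lu_k\big)\,v+g(v)+\frac{L}{2}u_{k+1}^2.
\]

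Next I would compare this with the integrand of \eqref{PGscal} for the choice $g_k=\nabla f(u_k)$, namely $\nabla f(u_k)\,v+\frac{L}{2}(v-u_k)^2+g(v)$, whose expansion equals $\frac{L}{2}v^2+(\nabla f(u_k)-Lu_k)\,v+g(v)+\frac{L}{2}u_k^2$. The two objectives differ only by the constant $\frac{L}{2}(u_{k+1}^2-u_k^2)$, which is independent of $v$, so they share the same set of global minimizers. By Lemma \ref{lem:infdimsetting} (equivalently Corollary \ref{lem:prototype}), the value $u_{k+1}(x)$ is a global minimizer of \eqref{PGscal} for almost every $x\in\Omega$, and therefore $u_{k+1}(x)$ minimizes $\psi$, which is precisely the statement $u_{k+1}(x)\in\G(z_k(x))$ for almost every $x\in\Omega$.

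I do not expect a genuine obstacle here: the assertion is an algebraic reformulation rather than an analytic result, and the only care needed is the bookkeeping of the linear-in-$v$ terms together with the observation that the almost-everywhere validity and measurability of the inclusion are inherited directly from \eqref{sol-cond}. The conceptual content is simply that $\G$ is engineered so that its fixed-point-type definition encodes the proximal step, with the argument $z_k$ chosen to absorb the gradient-and-displacement data $\nabla f(u_k)+L(u_{k+1}-u_k)$ of the iteration.
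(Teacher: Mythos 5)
Your proposal is correct and is exactly the computation the paper implicitly relies on: the paper states this lemma without proof, treating it as immediate from the construction of $z_k$ and the pointwise characterization \eqref{sol-cond}, since with $u=u_{k+1}(x)$ and $z=z_k(x)$ one has $\frac{Lu+z}{L}=\frac{Lu_k(x)-\nabla f(u_k)(x)}{L}$, so that $u\in\G(z)$ reduces to \eqref{sol-cond}. Your cancellation of the linear-in-$v$ terms is precisely this verification, so the argument matches the intended one.
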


Unfortunately, the set-valued map $\G$ is not monotone in general. If $g$ would be convex, then the optimality condition of \eqref{subP2} is $z_k(x) \in \partial g(u_{k+1}(x))$
for almost all $x\in \Omega$, hence one could choose $\G = \gph (\partial (g^*))$, where $g^*$ denotes the convex conjugate of $g$.

\begin{remark}
The definition of $\G$ is related to the concept of $L$-stationary points, introduced in \cite[Definition 9.19]{Beck2014} for $l^0$-optimization problems in $\R^n$.
\end{remark}

For the rest of this section, we will always suppose that $g$ satisfies Assumption \ref{ass_B}.
As a first direct consequence from the definition  of $\G$ we get

\begin{corollary} \label{cor:G_properties}
Assume $\frac{1}{L} > s_0$.
 Let $u,z\in \mathbb R$ such that $u \in \G (z)$. Then we have:
If $u>0$ then $u \ge \max\left(u_0, \frac{Lq_0-z}{L}\right)$, and if $u<0$ then $u \le \min\left(-u_0, -\frac{Lq_0+z}{L}\right)$.
  In case $u=0$ it holds $|z|\le Lq_0$.
Here, $u_0:=u_0(L^{-1})$ and $q_0:=q_0(L^{-1})$ are the positive constants from Theorem \ref{thm:discontinuity}.
\end{corollary}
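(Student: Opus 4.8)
The plan is to recognize the defining minimization problem for $\G$ as a scalar proximal subproblem of the type \eqref{PGscal} and then to read off the three stated bounds directly from Theorem~\ref{thm:discontinuity} together with the sign Lemma~\ref{lem:prox_sign}. Concretely, for fixed $u,z\in\R$ the condition $u\in\G(z)$ means that $u$ is a global minimizer of $v\mapsto -zv+\frac{L}{2}(v-u)^2+g(v)$, which is exactly problem \eqref{PGscal} under the identifications $g_k=-z$ and $u_k=u$. Hence by Corollary~\ref{lem:prototype} it is equivalent to
\[
u\in\prox_{L^{-1}g}(q),\qquad q:=\frac{Lu+z}{L}.
\]
All three cases will then follow from the scalar characterization of $\prox_{L^{-1}g}$, with the understanding that here $q$ itself depends on $u$.

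Next I would invoke the hypothesis $1/L>s_0$. By the first claim of Theorem~\ref{thm:discontinuity} (equivalently the second part of Corollary~\ref{lem:prototype}), every solution satisfies $u=0$ or $|u|\ge u_0:=u_0(L^{-1})$, which already supplies the $u_0$-component of both bounds. For the $q_0$-component I would combine sign information with the uniqueness statement. If $u>0$, then $u\ge 0$ forces $q\ge 0$ by Lemma~\ref{lem:prox_sign}; since $u\ne 0$ is a global minimizer, the uniqueness clause of Theorem~\ref{thm:discontinuity} (namely that $|q|<q_0$ makes $u=0$ the \emph{unique} minimizer) rules out $|q|<q_0$, so $q\ge q_0$. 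Unwinding $q=(Lu+z)/L\ge q_0$ gives $u\ge(Lq_0-z)/L$, and together with $u\ge u_0$ this yields the claimed maximum. The case $u<0$ is symmetric: Lemma~\ref{lem:prox_sign} gives $q\le 0$, and $|q|\ge q_0$ then forces $q\le -q_0$, hence $u\le -(Lq_0+z)/L$, which combined with $u\le -u_0$ is the claimed minimum. Finally, if $u=0$, then $0\in\prox_{L^{-1}g}(z/L)$, so $z/L$ lies in the region where $0$ is a global solution, i.e.\@ $|z/L|\le q_0$, equivalently $|z|\le Lq_0$.

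There is no serious obstacle here; the content is essentially a bookkeeping translation of the scalar results. The one point that requires care is that the proximal argument $q=(Lu+z)/L$ is not a free parameter but is tied to the (unknown) value of $u$, which is precisely why the conclusions are phrased as implicit one-sided bounds on $u$ involving $z$ rather than explicit formulas. The only genuinely logical step is the contrapositive use of the uniqueness clause of Theorem~\ref{thm:discontinuity} to convert ``$u\ne 0$ is a global minimizer'' into ``$|q|\ge q_0$'', and one must be careful to pair this with the correct sign of $q$ furnished by Lemma~\ref{lem:prox_sign} in order to resolve $|q|\ge q_0$ into $q\ge q_0$ (for $u>0$) or $q\le -q_0$ (for $u<0$).
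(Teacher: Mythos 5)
Your argument is correct and follows essentially the same route as the paper: rewrite $u\in\G(z)$ as $u\in\prox_{L^{-1}g}\bigl(\frac{Lu+z}{L}\bigr)$, then combine Lemma~\ref{lem:prox_sign} with the $u_0$-gap and the $q_0$-characterization from Theorem~\ref{thm:discontinuity}. Your explicit use of the uniqueness clause to exclude $|q|<q_0$ for a nonzero minimizer is a slightly more careful spelling-out of a step the paper states tersely as an equivalence, but the substance is identical.
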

\begin{proof}
By construction of $\G$, we have
\[
 u\in \G(z) \Longleftrightarrow u = \prox_{L^{-1}g}\left( \frac{Lu+z}{L}\right).
\]
If $u\ne 0$ then by Lemma \ref{lem:prox_sign} and Theorem \ref{thm:discontinuity}, it follows that $u\ge u_0(L^{-1})$  if and only if  $\frac{Lu+z}{L}\geq q_0(L^{-1})$ and likewise  $u<-u_0 (L^{-1})$ iff  $\frac{Lu+z}{L}\leq -q_0(L^{-1})$.  The claim follows for $u>0$ and $u<0$, respectively. On the other hand $u = 0$ is a solution if and only if $|\frac{z}{L}|\leq q_0$, which implies the claim for $u=0$.
\end{proof}

\subsection{A convergence result for inclusions}

Let us recall a few helpful notions and results from set-valued analysis that can be found in the literature, see e.g., \cite{set-val:Aubin,var_ana:rockafellar}.
\begin{definition}
	For a sequence of sets $A_n\subset\R^n$ we define the \textit{outer limit} by
	$$\limsup\limits_{n\to\infty}A_n := \{x:\enspace\exists (x_{n_k}),x_{n_k}\to x,\enspace x_{n_k}\in A_{n_k}\}.$$
\end{definition}

\begin{definition} \label{ch4:defset} Let $S:\R^m\rightrightarrows\R^n$ be a set-valued map.
	\begin{enumerate}
		\item The domain and graph of $S$ are defined by $$\dom S:=\{x:S(x)\not=\emptyset\},\enspace \gph S:=\{(x,y): y\in S(x)\}.$$
		\item $S$ is called \textit{outer semicontinuous} in $\bar x$ if $$\limsup\limits_{x\to\bar x}S(x)\subseteq S(\bar x).$$
		\item $S$ is called \textit{locally bounded} at $x\in\R^m$ if  there is a neighborhood $U$ of $x$ such that $S(U)$ is bounded.
\end{enumerate}\end{definition}

A set-valued mapping $S$ is outer semicontinuous if and only if it has a closed graph.

The following convergence analysis relies on \cite[Thm. 7.2.1]{set-val:Aubin}.
We want to extend this result to set-valued maps into $\R^n$ that are not locally bounded.
Let us define the following set-valued map that serves as a generalization of $x \to \conv(F(x))$
for the locally unbounded situation.

\begin{definition}
Let $F:\R^m\rightrightarrows \R^n$ be a set-valued map.

Define the set-valued map $\conv^\infty F:\R^m\rightrightarrows \R^n$ by
\[
 (\conv^\infty F)(x) := \limsup_{k\to \infty} \conv  \left(F\left( x + B_{1/k}(0) \right) \right).
\]

\end{definition}

By definition, it holds $\gph F \subset \gph \conv^\infty F$.
In addition, we have $ \overline{\conv}(F(x)) \subset  (\conv^\infty F)(x)$.
If $F$ is locally bounded in $x$, then $(\conv^\infty) F(x) =  \overline{\conv}(F(x))$,
which can be proven using Carathéodory's theorem.
In general, $\dom \conv^\infty F$ is strictly larger than $\dom F$.

\begin{example}

 Define $F:\R \rightrightarrows \R$ by
 \[
  \gph F = \{(x,y):\  yx=1\}.
 \]
 Then $F$ is not locally bounded near $x=0$.
 Here it holds  $\gph( \conv^\infty F) = \gph F \cup ( \{0\} \times \R)$, so that $\dom ( \conv^\infty F) = \R \ne \dom F$.

\end{example}

\begin{theorem}\label{thm:conv_setval}
	Let $(\Omega,\mathcal{A},\mu)$ be a measure space and
		$F:\R^m\rightrightarrows \R^n$ be a	set-valued map.
	Let sequences of measurable functions $(x_n),(y_n) $  be given such that
	\begin{enumerate}
		\item $x_n $ converges almost everywhere to some function $x:\Omega\to \R^m$,
		\item $y_n$ converges weakly to a function $y$ in $L^1(\mu,\R^n)$,
		\item $y_n(t)\in F(x_n(t))$ for almost all $t\in\Omega$.
	\end{enumerate}
Then for almost all $t\in\Omega$ it holds:
\[
y(t)\in (\conv^\infty F)(x(t)).
\]
\end{theorem}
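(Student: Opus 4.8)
The plan is to prove the inclusion $y(t) \in (\conv^\infty F)(x(t))$ almost everywhere by combining a Mazur-type convexification (to handle the weak convergence of $(y_n)$) with a localization argument at each point $t$ (to handle the almost-everywhere convergence of $(x_n)$ and the lack of local boundedness of $F$). Let me sketch the structure.

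First I would recall that weak convergence $y_n \rightharpoonup y$ in $L^1(\mu,\R^n)$ together with Mazur's lemma produces a sequence of finite convex combinations of the $y_n$ that converges \emph{strongly} in $L^1$. Concretely, for each $N$ I can form $\tilde y_N = \sum_{n} \lambda_n^N y_n$, a convex combination of tail elements $\{y_n : n \ge N\}$, such that $\tilde y_N \to y$ strongly in $L^1(\mu,\R^n)$. Passing to a subsequence, I may assume $\tilde y_N(t) \to y(t)$ pointwise for almost all $t$. This is the standard device for converting weak convergence into pointwise information; the cost is that $y(t)$ now appears as a limit of \emph{convex combinations} of values $y_n(t)$, which is exactly why the conclusion involves $\conv$ rather than $F$ itself.

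Next I would fix a point $t$ in the full-measure set where simultaneously $x_n(t) \to x(t)$ and $\tilde y_N(t) \to y(t)$. For this fixed $t$, I want to show $y(t) \in (\conv^\infty F)(x(t))$. The key observation is that each $\tilde y_N(t)$ is a convex combination of values $y_n(t)$ with $n \ge N$, and each such $y_n(t) \in F(x_n(t))$. By almost-everywhere convergence of $x_n$, for any radius $1/k$ there is an index $N_k$ so that $x_n(t) \in x(t) + B_{1/k}(0)$ for all $n \ge N_k$. Hence for $N \ge N_k$, every $\tilde y_N(t)$ lies in $\conv\bigl(F(x(t) + B_{1/k}(0))\bigr)$. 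Letting $N \to \infty$ along the subsequence, $y(t)$ is a limit of points in $\conv\bigl(F(x(t) + B_{1/k}(0))\bigr)$, and since this holds for every $k$, I conclude by the definition of $\conv^\infty$ as an outer limit over shrinking balls that $y(t) \in (\conv^\infty F)(x(t))$.

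\textbf{The main obstacle} I anticipate is the bookkeeping in the last step: the Mazur combination $\tilde y_N$ mixes indices $n \ge N$, but to stay inside $\conv(F(x(t)+B_{1/k}(0)))$ I need \emph{all} contributing indices to satisfy $x_n(t) \in x(t)+B_{1/k}(0)$, so I must choose $N$ large relative to $k$ and then interchange the order of the two limits ($N\to\infty$ versus $k\to\infty$) carefully. This is precisely where the generalized convex hull $\conv^\infty$, defined as $\limsup_k \conv(F(x+B_{1/k}(0)))$, does the work: because $F$ need not be locally bounded, the ordinary closed convex hull $\overline{\conv}(F(x(t)))$ may be too small, and the nested-ball outer limit is exactly the object that survives the double limit without any boundedness hypothesis. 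A secondary point requiring care is that the Mazur lemma and the pointwise extraction of subsequences are applied to $L^1$-valued functions, so I must ensure the null set where any of the convergences fail is a countable union (over $k$, and over the subsequence extraction) and hence still null.
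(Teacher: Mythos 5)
Your proposal is correct and follows essentially the same route as the paper: the paper's proof defers the convexification step to the argument of Aubin--Frankowska, Theorem 7.2.1 (which is precisely the Mazur-lemma localization you carry out) to obtain $y(t)\in\bigcap_{k}\overline{\conv}\bigl(F(x(t)+B_{1/k}(0))\bigr)$ a.e., and then performs the same final extraction of a sequence $u_k\in\conv\bigl(F(x(t)+B_{1/k}(0))\bigr)$ with $u_k\to y(t)$ to land in the outer limit $(\conv^\infty F)(x(t))$. Your diagonal choice of $N(k)$ even yields this sequence directly, so the argument is complete.
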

\begin{proof}

Arguing as in the proof of  \cite[Thm. 7.2.1]{set-val:Aubin}, we find
\[
 y(t) \in \bigcap_{k\in \mathbb N} \overline{\conv} \left( F( x(t) + B_{1/k}(0)) \right)
\]
for almost all $t\in \Omega$. Take $t\in \Omega$ such that the above inclusion is satisfied. Then there is a sequence $(u_k)$
such that $u_k \to y(t)$, $u_k \in \conv (F( x(t) + B_{1/k}(0)))$.
This implies
$y(t) \in \limsup_{k\to\infty}  \conv\left( F( x(t) + B_{1/k}(0)) \right)$,
or equivalently $y(t)\in (\conv^\infty F)(x(t))$.
\end{proof}

\subsection{Stationarity conditions for weak limit points}

Recall, for iterates $(u_k)$  of Algorithm \ref{alg:PG} and the corresponding sequence $z_k$ we have by construction
\begin{equation*}
u_{k+1}(x)\in{\G}(z_k(x))\enspace\text{ f.a.a. }\enspace x\in \Omega.
\end{equation*}

Then by Theorem \ref{thm:conv_setval}, we could expect the inclusion
$u^*(t)\in (\conv^\infty \G)(-\nabla f(u^*)(x))$ pointwise almost everywhere to hold in the subsequential limit.
However, the convexification of $\G$ results in a set-valued map that is very large.
In order to obtain a smaller inclusion in the limit, we will employ the result of Corollary \ref{cor:G_properties}:
the graph of $\G$ can be split into three clearly separated components.
In the sequel, we will show that we can pass to the limit with each component separately, which leads to a smaller set-valued map in the limit.
\color{black}
This observation motivates the following splitting of the map $\G$.

\begin{definition} \label{def:mapsG}
For $L>0$ we define the following set-valued mappings.
	\begin{enumerate}
		\item $\G^+:\R\rightrightarrows\R$ with $u\in \G^+(z)$ $:\Longleftrightarrow$ $u\in \G(z)$ and $u>0$,
		\item $\G^-:\R\rightrightarrows\R$ with $u\in \G^-(z)$ $:\Longleftrightarrow$ $u\in \G(z)$ and $u<0$,
		\item $\G^0:\R\rightrightarrows\R$ with $u\in \G^0(z)$  $:\Longleftrightarrow$ $u\in \G(z)$ and $u=0$.
	\end{enumerate}
\end{definition}

The mappings $\G^+,\G^-$ and $\G^0$ are depicted in Figure \ref{fig:mapG} for the special choice $g(u):=\frac\alpha2|u|^2+|u|^p+\delta_{[-b,b]}(u),\enspace p\in(0,1),b\in(0,\infty)$.

Obviously we have by construction
\begin{equation}\label{incl_all}
u_{k+1}(x)\in (\G^+\cup\G^-\cup\G^0)(z_k(x)) \quad \text{ f.a.a. } x\in \Omega.
\end{equation}

\begin{corollary}
	The mappings  $\G,\G^0$ are outer semicontinuous.  If  $L^{-1}>s_0$ the same holds for $\G^+ $ and $ \G^-$.
\end{corollary}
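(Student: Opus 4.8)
The plan is to prove that each of the four set-valued maps $\G$, $\G^0$, $\G^+$, $\G^-$ is outer semicontinuous by showing each has a closed graph, using the characterization recalled just after Definition~\ref{ch4:defset}. The outer semicontinuity of $\G$ itself follows from the fact that $\G(z) = \prox$-type solutions inherit the closed graph of $\prox_{L^{-1}g}$: indeed, $u\in\G(z) \Leftrightarrow u = \prox_{L^{-1}g}\big(\frac{Lu+z}{L}\big)$, and since $\gph\prox_{sg}$ is closed (stated in the paragraph before Lemma~\ref{lem:prox_sign}), a convergent sequence $(z_n,u_n)\to(z,u)$ with $u_n\in\G(z_n)$ passes to the limit inside this fixed-point relation by continuity of $(u,z)\mapsto\frac{Lu+z}{L}$, giving $u\in\G(z)$.

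For $\G^0$, I would argue directly from Corollary~\ref{cor:G_properties}: $u\in\G^0(z)$ holds if and only if $u=0$ and $|z|\le Lq_0$. This is already a closed set in $(z,u)$-space — it is $\{(z,0): |z|\le Lq_0\}$ — so $\G^0$ trivially has closed graph and is outer semicontinuous, with no restriction on $L$ needed.

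The maps $\G^+$ and $\G^-$ are where the hypothesis $L^{-1}>s_0$ enters, and this is the crux. The concern is that a sequence $u_n>0$ with $u_n\in\G^+(z_n)$ could converge to $u=0$, so that the limit lands in $\G^0$ rather than $\G^+$, breaking closedness. This is exactly the degeneracy that the sparsity-gap of Theorem~\ref{thm:discontinuity} rules out. By Corollary~\ref{cor:G_properties}, any $u\in\G^+(z)$ satisfies $u\ge u_0 = u_0(L^{-1})>0$, where the strict positivity of $u_0$ requires $L^{-1}>s_0$. Hence if $(z_n,u_n)\to(z,u)$ with $u_n\in\G^+(z_n)$, then $u_n\ge u_0$ forces $u\ge u_0>0$, so $u\ne 0$; combined with $u\in\G(z)$ (from the already-established closed graph of $\G$) and $u>0$, we get $u\in\G^+(z)$. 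The argument for $\G^-$ is identical with the sign reversed, using $u\le -u_0$.

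The main obstacle is precisely making explicit why the sparsity gap $u_0>0$ is indispensable: without it (i.e.\@ for $L^{-1}\le s_0$) the positive branch could accumulate at zero and the graph of $\G^+$ would fail to be closed, since the limiting point would belong to $\G^0$. I would therefore foreground the use of Corollary~\ref{cor:G_properties} as the step that quantitatively separates the three branches and emphasize that the closedness of $\G$ is what supplies the limiting inclusion $u\in\G(z)$, while the uniform gap $u_0$ is what guarantees the sign of the limit is preserved.
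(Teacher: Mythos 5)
Your proof is correct and follows essentially the same route as the paper: closedness of $\gph\G$ (you via the closed graph of $\prox_{sg}$ and the fixed-point characterization, the paper by passing to the limit in the defining optimality inequality using lower semicontinuity of $g$ -- the same mechanism), followed by identifying $\gph\G^0,\gph\G^+,\gph\G^-$ as intersections of $\gph\G$ with closed sets, where the uniform gap $u_0(L^{-1})>0$ from Corollary~\ref{cor:G_properties} is exactly what makes $\{u\ge u_0\}$ and $\{u\le -u_0\}$ usable and is where $L^{-1}>s_0$ enters. Your explicit remark that $\G^0$ needs no restriction on $L$ is consistent with the paper, since the $q_0$-characterization in Theorem~\ref{thm:discontinuity} holds for all $s>0$.
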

\begin{proof}
	$\G$ being outer semicontinuous is equivalent to the closedness of its graph. Let $(u_n),(q_n)$ be sequences such that $u_n\to u, q_n\to q$ and $u_n\in \G(q_n)$.
	By definition it holds
	\begin{align*}
	0\leq -q_n(v-u_n)+(g(v)-g(u_n))+\frac{L}{2}(v-u_n)^2
	\end{align*} for all $v\in\R$. Passing to the limit in above inequality yields
	\begin{align*}
	0\leq -q(v-u)+(g(v)-g(u))+\frac{L}{2}(v-u)^2
	\end{align*}
 due to the lower semicontinuity of $g$. Hence,
	$$u=\argmin\limits_{v\in\R} -qv+\frac{L}{2}(v-u)^2+ g(v),$$ i.e., $u\in\G(q)$, which is the claim for $\G.$
	For $\G^+,\G^-,\G^0$ the claim follows as their graphs are intersections of closed sets with $\gph\G$,
	which follows from Corollary \ref{cor:G_properties} (for suitable chosen $L$ in case of $\G^+,\G^- ).$
\end{proof}

In the sequel we want to  apply Theorem \ref{thm:conv_setval} to
each of the set-valued maps in \eqref{incl_all} separately.
Let us first show the next helpful result.

\begin{lemma}\label{lem324}
	Let $(u_k)$ be a sequence of iterates generated by Algorithm \ref{alg:PG}. Let  $b>a$ be given. Define
	\[
	A_k^+ := \{ x\in \Omega:\ u_k(x) \ge b\},
	\]
	\[
	A_k^- := \{ x\in \Omega:\ u_k(x) \le a\},
	\]
	and $\chi_k^+:=\chi_{A_k^+}$, $\chi_k^-:=\chi_{A_k^-}$.
	Then it holds
	\[
	\sum_{k=1}^\infty \|\chi_{k+1}^+\chi_k^- + \chi_{k+1}^-\chi_k^+\|_{L^1(\Omega)}<+\infty.
	\]

	If $\chi_k^++\chi_k^-=1$ for all $k$ almost everywhere, then there are characteristic functions
	$\chi^+,\chi^-$ such that $\chi^++\chi^-=1$ almost everywhere,
	$\chi_k^+\to \chi^+$ and $\chi_k^-\to \chi^-$ strongly in $L^1(\Omega)$ and pointwise almost everywhere.

\end{lemma}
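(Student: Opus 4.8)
The plan is to control both claims by the single summable quantity $\sum_{k=1}^\infty \|u_{k+1}-u_k\|_{L^2(\Omega)}^2$, which is finite by the proof of Theorem~\ref{thm:conv}~(iii) (there the telescoping of \eqref{monotonicity} yields summability, not merely $\|u_{k+1}-u_k\|_{L^2(\Omega)}\to 0$).

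For the first claim I would first observe that the two summands have disjoint supports: $\chi_{k+1}^+\chi_k^-$ is supported where $u_{k+1}\ge b$ and $u_k\le a$, while $\chi_{k+1}^-\chi_k^+$ is supported where $u_{k+1}\le a$ and $u_k\ge b$, and since $b>a$ these sets are disjoint. On each of them the pointwise gap satisfies $|u_{k+1}(x)-u_k(x)|\ge b-a>0$. Hence
\[
(b-a)^2\,\|\chi_{k+1}^+\chi_k^-+\chi_{k+1}^-\chi_k^+\|_{L^1(\Omega)}\le \int_\Omega |u_{k+1}(x)-u_k(x)|^2\dx=\|u_{k+1}-u_k\|_{L^2(\Omega)}^2.
\]
Summing over $k$ and invoking the finiteness of $\sum_k\|u_{k+1}-u_k\|_{L^2(\Omega)}^2$ then gives the claimed convergence of the series.

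For the second claim, under the hypothesis $\chi_k^++\chi_k^-=1$ a.e.\ I would identify the telescoping difference with the product terms. Since being outside $A_k^+$ coincides almost everywhere with being in $A_k^-$, the symmetric difference $A_{k+1}^+\triangle A_k^+$ splits as $(A_{k+1}^+\cap A_k^-)\cup(A_{k+1}^-\cap A_k^+)$, so that
\[
|\chi_{k+1}^+-\chi_k^+|=\chi_{k+1}^+\chi_k^-+\chi_{k+1}^-\chi_k^+ \quad\text{a.e.}
\]
Combined with the first claim this yields $\sum_k\|\chi_{k+1}^+-\chi_k^+\|_{L^1(\Omega)}<+\infty$. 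Consequently $(\chi_k^+)$ is Cauchy in $L^1(\Omega)$ and converges to some $\chi^+\in L^1(\Omega)$; moreover, by Tonelli's theorem the almost-everywhere finiteness of $\sum_k|\chi_{k+1}^+(x)-\chi_k^+(x)|$ forces $(\chi_k^+(x))$ to be a Cauchy sequence in $\R$ for a.e.\ $x$, hence to converge pointwise a.e. Because each $\chi_k^+(x)\in\{0,1\}$, the limit $\chi^+$ takes values in $\{0,1\}$ a.e.\ and is therefore a characteristic function. Finally, setting $\chi^-:=1-\chi^+$ and passing to the limit in $\chi_k^-=1-\chi_k^+$ gives $\chi_k^-\to\chi^-$ strongly in $L^1(\Omega)$ and pointwise a.e., with $\chi^++\chi^-=1$ a.e.

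The computations are routine; the only point requiring genuine care is the identity $|\chi_{k+1}^+-\chi_k^+|=\chi_{k+1}^+\chi_k^-+\chi_{k+1}^-\chi_k^+$, which is precisely where the standing hypothesis $\chi_k^++\chi_k^-=1$ enters. Without it, the difference $\chi_{k+1}^+-\chi_k^+$ would also register transitions into and out of the ``middle band'' $\{u_k\in(a,b)\}$, and the telescoping estimate would fail. The remaining subtlety — confirming that the $L^1$-limit $\chi^+$ is genuinely a characteristic function rather than an arbitrary $[0,1]$-valued function — is dispatched by the pointwise-a.e.\ argument above.
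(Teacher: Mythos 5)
Your proof is correct and follows essentially the same route as the paper: both bound the measure of the ``jump'' sets by $(b-a)^{-2}\|u_{k+1}-u_k\|_{L^2(\Omega)}^2$ using the summability from the proof of Theorem \ref{thm:conv}(iii), and both reduce the second claim to the identity $\chi_{k+1}^+(1-\chi_k^+)+(1-\chi_{k+1}^+)\chi_k^+=|\chi_{k+1}^+-\chi_k^+|$. Your write-up is in fact slightly more detailed than the paper's (disjointness of the two supports, and the Cauchy/pointwise argument showing the limit is again a characteristic function), which the paper leaves implicit.
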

\begin{proof}
	Let $x\in\Omega$.
	If $\chi_{k+1}^+(x)\chi_k^-(x)=1$, then $u_{k+1}(x) - u_k(x) \ge b-a$.
	This proves $\|\chi_{k+1}^+\chi_k^-\|_{L^1(\Omega)} \le (b-a)^{-2} \|u_{k+1}-u_k\|_{L^2(\Omega)}^2$.
	Similarly, we obtain $\|\chi_{k+1}^-\chi_k^+\|_{L^1(\Omega)} \le (b-a)^{-2} \|u_{k+1}-u_k\|_{L^2(\Omega)}^2$.
	Since $\sum_{k=1}^\infty \|u_{k+1}-u_k\|_{L^2(\Omega)}^2<+\infty$, the claim follows.
	Suppose $\chi_k^++\chi_k^-=1$ almost everywhere. Then we have
	\[
	 \chi_{k+1}^+\chi_k^- + \chi_{k+1}^-\chi_k^+ = \chi_{k+1}^+(1-\chi_k^+) + (1-\chi_{k+1}^+)\chi_k^+ = |\chi_{k+1}^+-\chi_k^+|,
	\]
	which implies the second claim.
\end{proof}

\begin{theorem}\label{thm:set_valconv}
Let $s_0$ be as in Theorem \ref{thm:discontinuity}.
		Assume $\frac{1}{L} > s_0$.
	Let $(u_k)$ be a sequence of iterates generated by Algorithm \ref{alg:PG} with weak limit point $u^*\in L^2(\Omega)$, i.e., $u_{k_n}\rightharpoonup u^*$.
	Assume $\nabla f(u_{k_n})(x) \to \nabla f(u^*)(x)$ for almost every $x\in \Omega$.
	Let ${\mathcal G}^0, {\mathcal G}^+,\G^-:\R\rightrightarrows\R$ be as in Definition \ref{def:mapsG}.  Then
	\[
	  u^*(x)\in \left(\G_0\cup {\conv}^\infty \G^+\cup\conv^\infty\G^-\right)(-\nabla f(u^*)(x))
	  \]
	holds for almost all $x\in\Omega$.
\end{theorem}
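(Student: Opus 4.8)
The plan is to push the pointwise inclusion $u_{k+1}(x)\in\G(z_k(x))$ to the limit, but to do so separately on the positive, negative, and zero branches of $\G$ isolated in Corollary \ref{cor:G_properties}, so that the coarse convexified inclusion $u^*\in\conv^\infty\G(-\nabla f(u^*))$ is replaced by the sharper statement. First I would fix the argument of the map. Writing $z_k=-(\nabla f(u_k)+L(u_{k+1}-u_k))$, the hypothesis $\nabla f(u_{k_n})(x)\to\nabla f(u^*)(x)$ together with Corollary \ref{cor315} (so $u_{k+1}-u_k\to 0$ pointwise a.e.) yields $z_{k_n}(x)\to-\nabla f(u^*)(x)$ for a.e.\ $x$. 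Moreover, since $\|u_{k+1}-u_k\|_{L^2(\Omega)}\to 0$ by Theorem \ref{thm:conv}(iii), we have $u_{k_n+1}\rightharpoonup u^*$ in $L^2(\Omega)$, and on the finite-measure space $\Omega$ this gives weak $L^1$-convergence, the form required by Theorem \ref{thm:conv_setval}. Thus, with $x_n=z_{k_n}$ and $y_n=u_{k_n+1}$, hypotheses (1)--(2) of that theorem are available.

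The crux, and the main obstacle, is to keep the three branches from merging under convexification: I must show that for a.e.\ $x$ the sign of the iterates is eventually constant. Here I would invoke Lemma \ref{lem324} with $b=u_0/2$ and $a=-u_0/2$. Because every nonzero iterate satisfies $|u_k|\ge u_0$ by Corollary \ref{lem:prototype}, the sets $A_k^+,A_k^-$ coincide with $\{u_k>0\},\{u_k<0\}$, and the lemma gives $\sum_k\mu(\{u_{k+1}>0,\,u_k<0\}\cup\{u_{k+1}<0,\,u_k>0\})<\infty$. By Borel--Cantelli, a.e.\ $x$ lies in only finitely many of these sets, so direct sign flips cease eventually; combined with the pointwise convergence $\chi_k\to\chi$ of Theorem \ref{thm:conv}(iv) (whence $u_k(x)\ne 0$ eventually on $\{\chi=1\}$, which rules out sign changes passing through $0$), the sign of $u_k(x)$ stabilizes for a.e.\ $x$. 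Consequently $\chi_k^+:=\chi_{\{u_k>0\}}$ and $\chi_k^-:=\chi_{\{u_k<0\}}$ converge pointwise a.e.\ to characteristic functions $\chi^+,\chi^-$ with $\chi^++\chi^-=\chi$, producing a measurable partition $\Omega=\Omega^+\sqcup\Omega^-\sqcup\Omega^0$ into $\{\chi^+=1\}$, $\{\chi^-=1\}$, and $\{\chi=0\}$.

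Finally I would run Theorem \ref{thm:conv_setval} on each branch. On $\Omega^+$ the inclusion $u_{k_n+1}(x)\in\G^+(z_{k_n}(x))$ holds only for $n\ge N(x)$, not for every $n$, so I resolve this by exhaustion: set $\Omega_m^+:=\Omega^+\cap\bigcap_{n\ge m}\{u_{k_n+1}>0\}$, which increase to $\Omega^+$ up to a null set. On each $\Omega_m^+$ the tail sequence $(z_{k_n},u_{k_n+1})_{n\ge m}$ satisfies all three hypotheses of Theorem \ref{thm:conv_setval} with $F=\G^+$ \emph{for every index}, giving $u^*(x)\in\conv^\infty\G^+(-\nabla f(u^*)(x))$ a.e.\ on $\Omega_m^+$, hence a.e.\ on $\Omega^+$; the argument on $\Omega^-$ is symmetric with $F=\G^-$. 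On $\Omega^0$ I would bypass the theorem: recalling that $(1-\chi)u^*=0$ a.e.\ (shown earlier), one has $u^*=0$ there, and since eventually $0=u_{k_n+1}(x)\in\G(z_{k_n}(x))$ with $z_{k_n}(x)\to-\nabla f(u^*)(x)$, the closed graph (outer semicontinuity) of $\G$ forces $0\in\G(-\nabla f(u^*)(x))$, i.e.\ $u^*(x)\in\G^0(-\nabla f(u^*)(x))$. Assembling the three branches over the partition yields the claim. Without the sign-stabilization step one can only reach the useless inclusion into $\conv^\infty\G$, so that step is where the real work lies.
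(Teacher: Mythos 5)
Your proposal is correct and reaches the stated inclusion, but it handles the key technical difficulty by a genuinely different device than the paper. The difficulty is the same in both arguments: the branch inclusion $u_{k_n+1}(x)\in\G^+(z_{k_n}(x))$ only holds on the index-dependent set $\{u_{k_n+1}>0\}$, so Theorem \ref{thm:conv_setval} cannot be applied to $\G^+$ on all of $\Omega$ directly. The paper resolves this by fixing a point $(u',q')\in\gph\G^+$ and passing to the auxiliary sequences $\chi_{k+1}^+u_{k+1}+(1-\chi_{k+1}^+)u'$ and $\chi_{k+1}^+z_k+(1-\chi_{k+1}^+)q'$, for which the inclusion into $\G^+$ holds almost everywhere on $\Omega$ for every index; Theorem \ref{thm:conv_setval} is then applied once globally (this needs only the $L^1$ and pointwise convergence $\chi_k^\pm\to\chi^\pm$ from Lemma \ref{lem324}, applied with $a=0$, $b=u_0$ so that $\chi_k^++\chi_k^-=1$). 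You instead prove pointwise sign stabilization via the summability estimate of Lemma \ref{lem324} plus Borel--Cantelli and the pointwise convergence $\chi_k\to\chi$, and then apply Theorem \ref{thm:conv_setval} to the tail sequences on the exhausting sets $\Omega_m^+$; your treatment of the zero branch via $(1-\chi)u^*=0$ and the closedness of $\gph\G$ matches the paper's in substance. Both routes are sound: the paper's fill-in trick is shorter and avoids any eventual-sign argument, while your switch-counting argument is more transparent about why the branches do not mix under convexification and is essentially the same machinery the paper deploys later in the proof of Theorem \ref{theo420}; your localized use of Theorem \ref{thm:conv_setval} on subdomains is legitimate since that theorem is stated for an arbitrary measure space and its conclusion does not depend on discarding finitely many terms of the sequence.
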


\begin{proof}

	By Theorem \ref{thm:conv} and Corollary \ref{cor315},
	we have $u_{k_n+1}\rightharpoonup u^*$ in $L^2(\Omega)$ and
	\[
	z_{k_n}:=-\left(\nabla f(u_{k_n})+L(u_{k_n+1}-u_{k_n})\right)\to-\nabla f(u^*):=z
	\]
	pointwise almost everywhere on $\Omega$.
	Let us define $I^+_k:=\{x\in\Omega:\:u_k(x)>0\}$ and  $I^-_k:=\{x\in\Omega:\:u_k(x)<0\}$ with associated characteristic functions $\chi_k^+,\chi_k^-$.
	Then by Lemma \ref{lem324} with $a=0$ and $b=u_0$ with $u_0$ from Theorem \ref{thm:discontinuity}, we obtain
	$\chi_k^+\to \chi^+$ in $L^1(\Omega)$ and pointwise almost everywhere.
	Similarly, $\chi_k^-\to \chi^-$ in $L^1(\Omega)$ and pointwise almost everywhere.

	Let us fix $(u',q')\in \gph\G^+$. Then the following inclusion
	\[
	\chi_{k+1}^+u_{k+1} + (1-\chi_{k+1}^+)u' \in \G^+(\chi_{k+1}^+z_k + (1-\chi_{k+1}^+)q')
	\]
	is satisfied almost everywhere on $\Omega$. By Theorem \ref{thm:conv_setval}, we obtain
	\[
	\chi^+ u^* + (1-\chi^+)u' \in {\conv}^\infty\G^+(\chi^+z + (1-\chi^+)q')
	\]
	almost everywhere on $\Omega$. Similarly, we obtain for $(u'',q'')\in \gph G^-$
	\[
	\chi^- u^* + (1-\chi^-)u'' \in{\conv}^\infty \G^-(\chi^-z + (1-\chi^-)q'')
	\]
	and
	\[
	 (1-\chi) u^*  \in \G^0 ( (1-\chi) z )
	\]
	almost everywhere, where $\chi_k$ and $\chi$ are as in Theorem \ref{thm:conv}.
	Note that ${\conv}^\infty \G^0=\G^0$.
	By construction, $\chi_k^+ + \chi_k^- = \chi_k$, which implies $\chi^++\chi^- = \chi$.
	Then we can combine all the inclusions above into one, which is
	\[
	 u^+ (x) \in \left(\G_0\cup {\conv}^\infty \G^+\cup\conv^\infty\G^-\right)(-\nabla f(u^*)(x))
	\]
	for almost all $x\in \Omega$.
\color{black}
\end{proof}

Let us remark that the assumption of pointwise convergence of $(\nabla f(u_k))$ is not a severe restriction.
If $\nabla f:L^2(\Omega) \to L^2(\Omega)$ is  completely continuous, then this assumption is fulfilled.
For many control problems, this property of $\nabla f$ is guaranteed to hold.

Interestingly, we can get rid of the convexification operator $\conv^\infty$ if we assume that the whole sequence
$(\nabla f(u_k))$ converges pointwise almost everywhere.

\begin{theorem}\label{theo420}
Let $(u_k)$ be a sequence of iterates generated by Algorithm \ref{alg:PG} with weak limit point $u^*\in L^2(\Omega)$.
Assume $\nabla f(u_k) \to \nabla f(u^*)$ pointwise almost everywhere.
Then
\[
  u^*(x) \in \G( -\nabla f(u^*)(x))
\]
holds for almost all $x\in\Omega$.
\end{theorem}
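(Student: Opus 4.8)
The plan is to reduce the statement to a pointwise almost-everywhere convergence property and then exploit the closed graph of $\G$. Writing $z:=-\nabla f(u^*)$ and $z_k:=-(\nabla f(u_k)+L(u_{k+1}-u_k))$ as before, I would first combine the hypothesis that the \emph{whole} sequence $(\nabla f(u_k))$ converges to $\nabla f(u^*)$ pointwise a.e.\ with $u_{k+1}-u_k\to 0$ pointwise a.e.\ (Corollary~\ref{cor315}) to obtain $z_k(x)\to z(x)$ for a.e.\ $x$ \emph{along the whole sequence}, not merely a subsequence. Since $u_{k+1}(x)\in\G(z_k(x))$ a.e.\ and $\G$ is outer semicontinuous, the key observation is that \emph{if} one can show $u_{k+1}(x)\to u^*(x)$ pointwise a.e., then passing to the limit in the inclusion immediately yields $u^*(x)\in\G(z(x))$ for a.e.\ $x$, which is the claim; the convexification appearing in Theorem~\ref{thm:set_valconv} is forced only by weak convergence of $(u_{k+1})$, so it disappears once pointwise convergence is available. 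Thus the whole burden is to upgrade the weak limit to a pointwise limit on the support of the iterates.

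To do this I would reuse the branch decomposition already set up for Theorem~\ref{thm:set_valconv}. By Theorem~\ref{thm:conv}(iv) and Lemma~\ref{lem324} the sign-characteristic functions converge pointwise a.e., so for a.e.\ $x$ the iterates eventually remain in a single branch $\G^+$, $\G^-$ or $\G^0$ (Definition~\ref{def:mapsG}). On the set where the limiting characteristic function $\chi$ vanishes one has $u^*=0$ (the corollary following Theorem~\ref{thm:conv}) and $u_{k+1}(x)=0$ eventually, so letting $k\to\infty$ in $|z_k(x)|\le Lq_0$ (Corollary~\ref{cor:G_properties}) gives $0\in\G^0(z(x))\subseteq\G(z(x))$. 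It then remains to treat the positive branch, the negative one being symmetric.

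On the set where $u_k(x)\ge u_0$ eventually I would argue as follows. Every cluster point $c$ of $(u_k(x))$ satisfies $c\in\G^+(z(x))$, because $u_{k}(x)\in\G(z_{k-1}(x))$, $z_{k-1}(x)\to z(x)$ and $\G$ has closed graph, while $c\ge u_0>0$ keeps it on the positive branch. Next, since $u_{k+1}(x)-u_k(x)\to 0$, the set of cluster points of $(u_k(x))$ is connected, hence an interval $[a,b]$ with $a\ge u_0$; together with the closedness of $\G(z(x))$ this forces $[a,b]\subseteq\G^+(z(x))$. Finally, being the weak limit, $u^*(x)$ lies in the closed convex hull of the cluster values of $(u_k(x))$, i.e.\ in $[a,b]$, so $u^*(x)\in\G^+(z(x))\subseteq\G(z(x))$. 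Collecting the three branches yields $u^*(x)\in\G(z(x))$ for a.e.\ $x$.

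The main obstacle is precisely this last step: controlling the oscillation of $(u_k(x))$ and relating the \emph{weak} limit $u^*$ to the \emph{pointwise} cluster values. Showing the cluster set is connected requires pointwise boundedness of $(u_k(x))$, which does not follow from the global $L^2$-bound and would have to be extracted from the growth estimates for $\prox_{L^{-1}g}$ and $\G$ (Corollary~\ref{cor:G_properties}); and identifying $u^*(x)$ with a point of the cluster interval is a Young-measure–type statement, which is where the whole-sequence hypothesis is indispensable, since it is exactly what pins \emph{every} cluster point of $(u_k(x))$ to the single set $\G^+(z(x))$. Without it, only the convexified inclusion of Theorem~\ref{thm:set_valconv} survives.
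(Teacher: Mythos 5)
Your overall route --- pin every pointwise cluster value of $(u_k(x))$ to $\G(z(x))$ via the closed graph of $\G$ and the whole-sequence convergence $z_k(x)\to z(x)$, observe that the cluster set is an interval because $u_{k+1}(x)-u_k(x)\to0$ a.e.\ (Corollary \ref{cor315}), and then place $u^*(x)$ inside that interval --- is genuinely different from the paper's and is viable in principle. The paper instead covers the open complement of $\gph\G$ by countably many boxes $B_{\epsilon'}(\tilde z)\times B_\epsilon(\tilde u)$ and shows that for a.e.\ $x$ with $z(x)\in B_{\epsilon'}(\tilde z)$ the value $u^*(x)$ avoids $B_\epsilon(\tilde u)$. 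That version needs neither the sparsity constant $u_0$ nor the decomposition into $\G^+,\G^-,\G^0$ (so it does not require $1/L>s_0$, which your branch trichotomy silently imports via Lemma \ref{lem324} and Theorem \ref{thm:conv}(iv)); also, your worry about pointwise boundedness is unnecessary, since with steps tending to zero the set of finite cluster points is still an interval (possibly unbounded), every point of which lies in $\G(z(x))$, and $u^*(x)$ is finite anyway.

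The genuine gap is the step you yourself flag as the main obstacle: ``being the weak limit, $u^*(x)$ lies in the closed convex hull of the cluster values of $(u_k(x))$.'' This is exactly where the content of the theorem sits, and you assert it without proof. It is true, and no Young-measure machinery is needed; it can be closed by the same device the paper uses. For a threshold $c$ put $S_K:=\{x:\ u_k(x)\ge c\ \text{for all } k>K\}$; these sets increase in $K$, and testing $u_{k_n}\rightharpoonup u^*$ against $\chi_{S_K\cap F}$ for measurable $F$ of finite measure gives $\int_{S_K\cap F}u^*\ge c\,|S_K\cap F|$, hence $u^*\ge c$ a.e.\ on $\bigcup_K S_K$; running this over rational $c$, and symmetrically from above, yields $\liminf_k u_k(x)\le u^*(x)\le\limsup_k u_k(x)$ a.e., which is all your argument needs. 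Some argument of this kind must appear explicitly; without it the proof is incomplete. In the paper this is precisely the role of the sets $S_K^\pm$, combined with the switch-counting estimate of Lemma \ref{lem324} (applied with $a=\tilde u-\epsilon$, $b=\tilde u+\epsilon$) which guarantees that once $u_k(x)$ is expelled from $B_\epsilon(\tilde u)$ it eventually settles on one side --- the quantitative substitute for your connectedness-of-the-cluster-set observation. Finally, a small misstatement in your first paragraph: you do not need, and your own argument does not establish, pointwise convergence $u_{k+1}(x)\to u^*(x)$; what you actually use is the weaker trapping of $u^*(x)$ in the pointwise cluster interval.
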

\begin{proof}
Denote $z(x):= -\nabla f(u^*)(x)$. Then $z_k(x)\to z(x)$ almost everywhere.

Let $(\tilde z, \tilde u) \not\in \gph \G$. Since $\gph \G$ is closed,
there is $\epsilon>0$ such that
\[
 \left( B_\epsilon(\tilde z) \times  B_\epsilon(\tilde u) \right) \cap \gph \G = \emptyset.
\]
  Let $\epsilon'\in (0,\epsilon)$.
  Set
  \[
  I:= \{ x: \ |\tilde z- z(x)|<\epsilon'\},
  \]
  and
  \[
  I_K:=\{x\in I:\ |\tilde z-z_k(x)| < \epsilon \quad \forall k>K\}.
  \]
  The sequence $(I_K)$ is monotonically increasing.
  Since $z_k(x)\to z(x)$ for almost all $x\in\Omega$, we have $\cup_{K\in \N} I_K = I$.

  Define
  \[
  A_k^+ := \{ x\in \Omega:\ u_k(x) \ge \tilde u + \epsilon\},
  \]
  \[
  A_k^- := \{ x\in \Omega:\ u_k(x) \le \tilde u - \epsilon\},
  \]
  and $\chi_k^+:=\chi_{A_k^+}$, $\chi_k^-:=\chi_{A_k^-}$.
  By Lemma \ref{lem324} above, we have
  $\sum_{k=1}^\infty \|\chi_{k+1}^+\chi_k^- + \chi_{k+1}^-\chi_k^+\|_{L^1(\Omega)}<+\infty$,
  $\chi_{k+1}^+\chi_k^- + \chi_{k+1}^-\chi_k^+\to 0$ in $L^1(\Omega)$ and
  pointwise almost everywhere.

  Let $x\in I$. Then there is $K$ such that $x\in I_K$.
  This implies $u_{k}(x) \not \in B_\epsilon(\tilde u)$ for all $k>K$.
  Here, the pointwise convergence of the whole sequence $(z_k)$ is needed.
  The sum $\sum_{k=K+1}^\infty (\chi_{k+1}^+\chi_k^- + \chi_{k+1}^-\chi_k^+)(x)$ counts the number of switches
  between values larger than $\tilde u+\epsilon$ and smaller than $\tilde u-\epsilon$ from $u_k(x)$ to $u_{k+1}(x)$.
  Since this sum is finite for almost all $x\in \Omega$, there is only a finite number of such switches.
  Then there
  is $K'>K$ such that either $u_k(x)\ge \tilde u+\epsilon$ for all $k>K'$ or
  $u_k(x)\le \tilde u-\epsilon$ for all $k>K'$.
  Set
  \[
  S_K^+:= \{ x\in I: \ u_k(t) \ge \tilde u+\epsilon  \quad \forall k>K\},
  \]
  \[
  S_K^-:= \{ x \in I: \ u_k(t) \le \tilde u-\epsilon \quad \forall k>K\}.
  \]
  The sequences $(S_K^+)$ and $(S_K^-)$ are increasing, and $\cup_{K\in \N} (S_K^+\cup S_K^-) = I$.

  Since $u_{k_n}\rightharpoonup u^*$, this implies $u^*\ge \tilde u+\epsilon$ on $S_K^+$ and
  $u^*\le \tilde u-\epsilon$ on $S_K^-$.
  Since $\cup_{K\in \N} (S_K^+\cup S_K^-) = I$,
  this implies
  \[
   u^*(x) \not \in B_\epsilon(\tilde u)
  \]
  for almost all $x\in I$, which implies
  \[
   ((z(x), u^*(x))  \not \in B_{\epsilon'}(\tilde z)\times B_\epsilon(\tilde u)
   \]
   for almost all $x\in \Omega$. Since we can cover the complement of $\gph \G$ by countably many such sets,
   the claim follows.
\end{proof}

For convex functions $g$, the result above is equivalent to
\[
 -\nabla f(u^* ) \in \partial g(u^*),
\]
see, e.g., \cite[Cor. 27.9]{bauschkecombettes}.

\subsection{Pointwise convergence of iterates}
So far we were able to show that weak limit points  of iterates $(u_k)$ satisfy a certain inclusion  in a pointwise sense. However, the resulting set in the limit might still be large or  even  unbounded in general.
	Assuming that $\G$ is (locally) single-valued on its components $\G^+,\G^-,\G^0$, we can show local pointwise convergence of a subsequence of iterates $(u_{k_n})$ to a weak limit point $u^*\in L^2(\Omega)$.
	In the next result this is illustrated for the map $\G^+$, however it can be shown for the components $\G^-,\G^0$ similarly.
To this end, we set in the following  $\chi_k^+:=\chi_{\{x\in\Omega: \:u_k(x)>0\}}$  with   $\chi_k^+\to\chi^+$ in $L^1(\Omega)$ and pointwise almost everywhere  by Lemma \ref{lem324}.

\begin{theorem}
\label{theo421}
Let $\bar z\in \dom(\G^+)$. Assume that $\G^+:\R\to\R$ is single-valued and locally bounded on $B_\epsilon(\bar z)\cap\dom(\G^+)$ for some $\epsilon>0$.
Let $u_{k_n}\rightharpoonup u^*$ in $L^2(\Omega)$  and assume $\nabla f(u_{{k_n}})(x)\to \nabla f(u^*)(x)$ pointwise almost everywhere.
For $\epsilon' \in (0,\epsilon]$ define the set
\[
I_{{\epsilon'}}:=\left\{x\in\supp(\chi^+): -\nabla f(u^*)(x)\in B_{\epsilon'}(z)\cap\dom(\G^+)\right\}.
\]
Then $$u_{k_n}(x)\to u^*(x)$$ holds for almost all $x\in I$. Furthermore, we have
\[
u^*(x)\in \prox_{L^{-1}g}\left(\frac{1}{L}(Lu^*(x)-\nabla f(u^*)(x))\right)\text{ f.a.a. } x\in I_{\epsilon}.
\]
\end{theorem}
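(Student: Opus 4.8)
The plan is to combine three modes of convergence already at our disposal: the pointwise a.e.\@ convergence of the arguments $z_{k_n}$, the pointwise a.e.\@ convergence of the sign-characteristic functions $\chi_k^+$, and the weak $L^2$-convergence of the iterates. First I would collect the ingredients. Writing $z_m:=-(\nabla f(u_m)+L(u_{m+1}-u_m))$, the characterization \eqref{sol-cond} gives $u_{m+1}(x)\in\G(z_m(x))$ for a.a.\@ $x$; by Theorem~\ref{thm:conv} and Corollary~\ref{cor315} together with the hypothesis $\nabla f(u_{k_n})\to\nabla f(u^*)$ a.e., we get $z_{k_n}(x)\to z(x):=-\nabla f(u^*)(x)$ a.e.; and by Lemma~\ref{lem324} (with $a=0$, $b=u_0$) the characteristic functions $\chi_k^+$ of $\{u_k>0\}$ converge to $\chi^+$ pointwise a.e. Here $I_\epsilon$ denotes the set $I_{\epsilon'}$ for $\epsilon'=\epsilon$, i.e.\@ the set named $I$ in the statement.

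Next I would establish pointwise convergence on $I_\epsilon$. Fix a.a.\@ $x\in I_\epsilon$. Since $x\in\supp(\chi^+)$ and $\chi_{k_n+1}^+(x)\to\chi^+(x)=1$, we have $u_{k_n+1}(x)>0$ for all large $n$, hence $u_{k_n+1}(x)\in\G^+(z_{k_n}(x))$ and in particular $z_{k_n}(x)\in\dom\G^+$. Because $z(x)\in B_\epsilon(\bar z)\cap\dom\G^+$ with $B_\epsilon(\bar z)$ open, and $z_{k_n}(x)\to z(x)$, eventually $z_{k_n}(x)$ lies in a neighborhood of $z(x)$ on which $\G^+$ is bounded (local boundedness), so the scalar sequence $(u_{k_n+1}(x))_n$ is bounded. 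Any convergent subsequence has a limit $w$ with $(z(x),w)\in\gph\G^+$ by outer semicontinuity, and single-valuedness forces $w=\G^+(z(x))=:v(x)$; since all subsequential limits coincide and the sequence is bounded, $u_{k_n+1}(x)\to v(x)$. As $u_{k_n+1}-u_{k_n}\to0$ a.e.\@ by Corollary~\ref{cor315}, also $u_{k_n}(x)\to v(x)$.

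Then I would identify $v$ with $u^*$ on $I_\epsilon$ via weak convergence. From $u_{k_n}\rightharpoonup u^*$ and $u_{k_n+1}-u_{k_n}\to0$ strongly we get $u_{k_n+1}\rightharpoonup u^*$; multiplying by the bounded factor $\chi_{I_\epsilon}$ preserves weak convergence, so $\chi_{I_\epsilon}u_{k_n+1}\rightharpoonup\chi_{I_\epsilon}u^*$. On the other hand $\chi_{I_\epsilon}u_{k_n+1}$ is bounded in $L^2(\Omega)$ (weakly convergent, hence bounded, iterates) and converges pointwise a.e.\@ to $\chi_{I_\epsilon}v$; a bounded sequence converging a.e.\@ converges weakly to its pointwise limit (which lies in $L^2$ by Fatou), so $\chi_{I_\epsilon}u_{k_n+1}\rightharpoonup\chi_{I_\epsilon}v$. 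Uniqueness of the weak limit yields $u^*=v$ a.e.\@ on $I_\epsilon$, giving both $u_{k_n}(x)\to u^*(x)$ a.e.\@ on $I_\epsilon$ and $u^*(x)=\G^+(z(x))\in\G(z(x))$.

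Finally, for the $L$-stationarity statement I would invoke the equivalence recorded in the proof of Corollary~\ref{cor:G_properties}, namely $u\in\G(z)\Leftrightarrow u=\prox_{L^{-1}g}\big(\tfrac{Lu+z}{L}\big)$. Substituting $z=-\nabla f(u^*)(x)$ into $u^*(x)\in\G(-\nabla f(u^*)(x))$ produces the claimed proximal fixed-point identity on $I_\epsilon$. The main obstacle will be the pointwise convergence step: one must simultaneously use the convergence of the $\chi_k^+$ to ensure membership in the single-valued branch $\G^+$ (so that positivity is not lost in the limit) and the local boundedness to upgrade the closed-graph property into genuine continuity of the single-valued selection. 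The fact that the thresholds ``for all large $n$'' depend on $x$ is harmless for an a.e.-pointwise statement, but it forces the identification of $v$ with $u^*$ to go through the weak-limit uniqueness argument rather than any uniform-in-$x$ estimate.
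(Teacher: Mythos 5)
Your proposal is correct, and its first half coincides with the paper's argument: both establish, for a.e.\@ $x$ in the relevant set, that eventually $u_{k_n+1}(x)>0$ (via the pointwise convergence of $\chi^+_{k}$ from Lemma \ref{lem324}) and $z_{k_n}(x)$ lies in the region where $\G^+$ is single-valued and locally bounded, so that outer semicontinuity upgrades to continuity and $u_{k_n+1}(x)\to\G^+(z(x))$; your bounded-subsequence/closed-graph derivation of this is just a hands-on version of the citation of \cite[Cor.\ 5.20]{var_ana:rockafellar}. Where you genuinely diverge is in identifying the pointwise limit $v(x):=\G^+(z(x))$ with $u^*(x)$. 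The paper does this by invoking Theorem \ref{thm:set_valconv} together with the observation $\conv^\infty\G^+(z(x))=\G^+(z(x))$, i.e.\@ it leans on the set-valued convergence machinery (Theorem \ref{thm:conv_setval}) and implicitly on the localization via $\chi^+$ to know that $u^*(x)$ falls into the $\G^+$-branch of the union. You instead argue directly: $\chi_{I_\epsilon}u_{k_n+1}$ converges weakly to $\chi_{I_\epsilon}u^*$ (weak convergence is stable under multiplication by a fixed characteristic function) and, being $L^2$-bounded and a.e.\@ convergent, also converges weakly to $\chi_{I_\epsilon}v$, whence $v=u^*$ a.e.\@ on $I_\epsilon$ by uniqueness of weak limits. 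This is more elementary and self-contained — it does not require Theorem \ref{thm:set_valconv} at all, and it sidesteps the (unstated in the paper) verification that the inclusion of Theorem \ref{thm:set_valconv} selects the $\conv^\infty\G^+$ component on $\supp(\chi^+)$ — at the modest cost of quoting the standard fact that an $L^p$-bounded, a.e.\@ convergent sequence ($1<p<\infty$) converges weakly to its pointwise limit. Your final step, converting $u^*(x)\in\G(-\nabla f(u^*)(x))$ into the proximal fixed-point identity via the equivalence recorded in Corollary \ref{cor:G_properties}, is equivalent to the paper's appeal to closedness of the graph of $\prox_{L^{-1}g}$.
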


\begin{proof}
	Let $u_{k_n+1}\rightharpoonup u^*$ in $L^2(\Omega)$. By the assumption and
	Corollary \ref{cor:G_properties}
	it holds $z_{k_n}(x)\to z(x):=-\nabla f(u^*)(x)$ pointwise almost everywhere. In addition, $u_{k_n+1}\rightharpoonup u^*$ in $L^2(\Omega)$ holds.
	Let $\epsilon'\in (0,\epsilon)$ be given. Take
	$x\in I_{\epsilon'}$
	such that  $z_{k_n}(x)\to z(x)$. Then there is $K>0$ such that $|z_{k_n}(x)-\bar z|<\epsilon$ for all $k_n>K$. Since $x\in\supp(\chi^+)$ and $\chi_k^+\to\chi^+$ in $L^1(\Omega)$ and pointwise almost everywhere
	there is $K'>0 $ such that  $x\in \supp(\chi^+_k)$ for all $k>K'$.
	Hence, for $k_n$ sufficiently large we have
	\[
	z_{k_n}(x)\in B_\epsilon(\bar z)\cap \dom(\G^+).
	\]
	Since $\G^+$ is single-valued, locally bounded and outer semicontinuous in $B_\epsilon(\bar z)\cap \dom(\G^+)$, it is continuous, see also \cite[Cor. 5.20]{var_ana:rockafellar}.
	This implies
	\[
	\lim\limits_{n\to\infty}u_{k_n+1}(x) =\lim\limits_{n\to\infty} \G^+(z_{k_n}(x)) =\G^+(\lim\limits_{n\to\infty}z_{k_n}(x) )= \G^+(z(x)).
	\]
	The continuity property mentioned above implies $\conv^\infty\G^+(z(x)) = \G^+(z(x))$.
	Then by Theorem  \ref{thm:set_valconv}, $\G^+(z(x)) = \{u^*(x)\}$, and the convergence $u_{k_n}(x)\to u^*(x)$ follows.
	The fixed-point property is a consequence of the closedness of the graph of the proximal operator. As $x\in I_{\epsilon'}$ was chosen arbitrary,
	and $I_\epsilon = \cup_{\epsilon'\in(0,\epsilon)} I_{\epsilon'}$, the claim is proven.
\end{proof}

	The above result requires local boundedness of the set-valued map $\G$, which is not satisfied  in general.
	For some interesting choices of $g$, e.g. $g(u):=|u|^p$, it can be proven, see Section \ref{sec5}.
Let us give an example of a locally unbounded map $\G$ below.

\begin{example} Let $L>0$ and define $g(u):=\delta_{\mathbb{Z}}(u):=\begin{cases}0\quad&\text{ if }
u\in\mathbb{Z}\\+\infty&\text{else}.\end{cases}$ with the associated map $\G_L$. Set  $U:=[-\frac L2,\frac L2]$.
Then it holds that $\G(z)=\mathbb Z$  for all $z\in U$, i.e., $\G$ is clearly not locally bounded in the origin.
\end{example}

\subsection{Strong convergence of iterates}
	Many optimal control problems of type \eqref{P} include a smooth cost functional of form $u\to\frac\alpha2\|u\|^2_{L^2(\Omega)},\:\alpha>0$.
	For the rest of the sequel, we will treat this term explicitly in the convergence analysis to obtain an almost everywhere and strong convergence of a subsequence. Therefore let  $\tilde g:\R\to\R$ satisfy Assumption \ref{ass_B} and  consider a sequence of iterates computed by
	\begin{equation}\label{P_alpha}
	u_{k+1}:=\argmin\limits_{u\in  L^2(\Omega)}f(u_k)+\nabla f(u_k)(u-u_k)+\frac{L}{2}\|u-u_k\|^2_{L^2(\Omega)}+\frac\alpha2\|u\|^2_{L^2(\Omega)}+\int_\Omega\tilde g(u(x))\dx.
	\end{equation}
    The  solution to \eqref{P_alpha} is now given by
    \[
	u_{k+1}(x)\in \prox_{\frac{1}{L+\alpha}\tilde g}\left(\frac{1}{L+\alpha}(Lu_k(x)-\nabla f(u_k)(x))\right)
	\]
	for almost every $x\in\Omega$.
	It follows that all the analysis that was done in this section still applies in this case and all results can be transferred except for a possible change of notation.
	Furthermore, we adapt the set-valued map $\G:\R\to\R$ from Lemma \ref{lem:stat_points} which is  then defined by
	$$u\in \mathcal G(z):\Longleftrightarrow u=\argmin\limits_{v\in \R} -zv +\frac{L}{2}(v-u)^2+ \frac\alpha2v^2+ \tilde g(v).$$

 For simplicity we assume $\text{dom}(\tilde g)=[-b,b]$ with $b\in(0,\infty]$, i.e., the subproblem \eqref{P_alpha} is equivalent to a box constrained optimization problem of form
\[
u_{k+1}:=\argmin\limits_{u\in  L^2(\Omega)}f(u_k)+\nabla f(u_k)(u-u_k)+\frac{L}{2}\|u-u_k\|^2_{L^2(\Omega)}+\frac\alpha2\|u\|^2_{L^2(\Omega)}+\int_\Omega\tilde g(u(x))\dx.
\]
subject to $|u(x)|\leq b$ for almost every $x\in\Omega$.
To obtain strong convergence of iterates in $L^1(\Omega)$ and an $L$-stationary condition almost everywhere, we need to put
stronger and more restricting assumptions on $\tilde g$, as the next theorem shows.
To this end, let us introduce the following extension of Assumption \ref{ass_B}.
\paragraph{Assumption B$^+$.}
\begin{enumerate}[label=(B\arabic{enumi})]
	\setcounter{enumi}4
	\item\label{assb:6}  $\tilde g$ is $C^1$ on $(0,b)$ with $g'(b) := \lim_{u\nearrow b}g'(u)$.
	\item\label{assb:7} 
	For $s>0$  there is $u_I:=u_I(s)>0$ such that $u\mapsto \frac12 u^2+s\tilde g(u)$ is strictly convex on $[u_I,b]$.
\end{enumerate}

First, we have the following necessary optimality condition for \eqref{P_alpha} due to Assumption \ref{assb:6}.
\begin{corollary} \label{cor:varinptw}
	Let $u_{k+1}$ be a solution to \eqref{P_alpha} and $\tilde g$ satisfy in addition \ref{assb:6}. Then the pointwise inequality in $\R$
	\begin{align*}\big(\nabla f(u_k)(x)&+L(u_{k+1}(x)-u_k(x))+\alpha u_{k+1}(x)\\&+ \tilde g'(u_{k+1}(x))\big)(v-u_{k+1}(x))\geq 0\end{align*}for all $v\in [-b,b]$ holds for almost all $x\in I_{k+1}$.
\end{corollary}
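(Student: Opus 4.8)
The plan is to reduce the claimed variational inequality to a pointwise, scalar, box-constrained optimality condition and to exploit the $C^1$-regularity of $\tilde g$ away from the origin granted by Assumption \ref{assb:6}. By the pointwise characterization of solutions to \eqref{P_alpha} (obtained exactly as in Lemma \ref{lem:infdimsetting}), for almost every $x\in\Omega$ the value $u_{k+1}(x)$ is a global minimizer over $\R$ of the scalar function
\[
 \phi_x(v):= \nabla f(u_k)(x)(v-u_k(x)) + \frac{L}{2}(v-u_k(x))^2 + \frac{\alpha}{2}v^2 + \tilde g(v).
\]
Since $\dom(\tilde g)=[-b,b]$, minimizing $\phi_x$ over $\R$ is the same as minimizing its smooth polynomial part plus $\tilde g$ over the convex interval $[-b,b]$, and every minimizer necessarily lies in $[-b,b]$.

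First I would fix $x\in I_{k+1}$, so that $u_{k+1}(x)\ne0$ and hence $u_{k+1}(x)\in(0,b]\cup[-b,0)$. On each of these two half-open intervals $\tilde g$ is $C^1$ by Assumption \ref{assb:6} (using the symmetry from \ref{assb:1} to transfer regularity to $(-b,0)$), while the remaining terms of $\phi_x$ are polynomials; thus $\phi_x$ is differentiable at $u_{k+1}(x)$, with the one-sided derivative $\tilde g'(b)=\lim_{u\nearrow b}\tilde g'(u)$ from \ref{assb:6} understood at the boundary value $v=b$. Its derivative is
\[
 \phi_x'(v)= \nabla f(u_k)(x) + L(v-u_k(x)) + \alpha v + \tilde g'(v),
\]
which at $v=u_{k+1}(x)$ is precisely the bracketed quantity appearing in the statement.

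Next I would invoke the standard first-order necessary condition for minimizing a function differentiable at its minimizer over the convex set $[-b,b]$: every feasible direction yields a nonnegative directional derivative, that is,
\[
 \phi_x'(u_{k+1}(x))\,(v-u_{k+1}(x)) \ge 0 \qquad \forall v\in[-b,b].
\]
For an interior minimizer $u_{k+1}(x)\in(0,b)$ this reduces to $\phi_x'(u_{k+1}(x))=0$, so the inequality holds trivially; at the endpoint $u_{k+1}(x)=b$ the admissible directions satisfy $v-b\le0$ and optimality forces the left derivative $\phi_x'(b)\le0$, whence the product is again nonnegative, and the case $u_{k+1}(x)\in[-b,0)$ is handled symmetrically. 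Substituting the formula for $\phi_x'$ delivers the claimed inequality for almost all $x\in I_{k+1}$.

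The main obstacle is the behaviour at the boundary points $v=\pm b$, where $\tilde g$ is differentiable only from one side: one must verify that the one-sided derivative furnished by Assumption \ref{assb:6} is exactly the object entering the variational inequality and that the sign convention $v-b\le0$ produces a nonnegative product rather than the reverse. The reason for restricting the statement to $I_{k+1}$ is precisely to keep $u_{k+1}(x)$ away from the nonsmooth point $v=0$, at which $\tilde g$ need not be differentiable and the derivative expression would be meaningless; on $I_{k+1}$ this is guaranteed by $u_{k+1}(x)\ne0$.
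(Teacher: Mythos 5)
Your proposal is correct and follows essentially the same route as the paper: reduce \eqref{P_alpha} to the pointwise box-constrained scalar problem on $[-b,b]$, observe that on $I_{k+1}$ the point $u_{k+1}(x)\neq0$ lies where $\tilde g$ is $C^1$ by \ref{assb:6}, and read off the claimed inequality as the first-order necessary optimality condition over the convex set $[-b,b]$. Your additional care with the one-sided derivative at $v=\pm b$ and the sign of the feasible directions is a welcome elaboration of what the paper leaves implicit.
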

\begin{proof}
	 Since $\text{dom}(g) = [-b,b]$,
	 minimizing the integrand in 	\begin{equation}\label{inteq}\min\limits_{u\in L^2(\Omega)}\quad\int_\Omega \nabla f(u_k)(x)u(x)+\frac{L}{2}(u(x)-u_k(x))^2+\frac{\alpha}{2}|u(x)|^2+ \tilde g(u(x)) \dx.\end{equation} pointwise is equivalent to solve the constrained problem
	  $$\min\limits_{u:|u|\leq b}f(u_k)(x)u+\frac{L}{2}(u-u_k(x))^2+\frac{\alpha}{2}|u|^2+ \tilde g(u)$$
	  in every Lebesgue point $x$.
	  For $x\in I_{k+1}$  it holds $u_{k+1}(x)\not=0$, and therefore
	above problem is differentiable. The claimed inequality is the corresponding  necessary optimality condition.
\end{proof}

Let us for the rest of the sequel assume that $\tilde g$ satisfies \ref{assb:6} and \ref{assb:7} in addition to Assumption \ref{ass_B}. This enables us to give more information about the set-valued map $\G$ as the next result shows. That is, elements in $\G$ are  (possibly unique) solutions of an associated variational inequality.

\begin{lemma} \label{lem328_}Let $u_0 ({\frac{1}{L+\alpha}}),q_0({\frac{1}{L+\alpha}})$ be constants as in Theorem \ref{thm:discontinuity} and  $|u|\geq u_0({\frac{1}{L+\alpha}})$. Then
	$u\in\G(z)$ satisfies the variational inequality
	\begin{equation} \label{lem328:VI}
	(-z+\alpha u+ \tilde g'(u))(v-u)\geq 0
	\end{equation}
	for all $v\in[-b,b]$.
	If in addition $|\frac{z+Lu}{L+\alpha} |\ge q_0\left(\frac1{L+\alpha}\right)$ and $u_0\geq u_I$ with $u_I:=u_I({\frac{1}{L+\alpha}})$ as in \ref{assb:7}, then we have $u\in\G(z)$ if and only $u$ satisfies \eqref{lem328:VI}.
\end{lemma}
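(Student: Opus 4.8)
The plan is to reduce both assertions to the first-order analysis of a one-dimensional, only partially convex, proximal problem. First I would record that, by completing the square exactly as in the proof of Corollary \ref{cor:G_properties}, the membership $u\in\G(z)$ is equivalent to $u$ being a global minimizer over $[-b,b]$ of
\[
\psi(v):=\tfrac{1}{2}\Big(v-\tfrac{z+Lu}{L+\alpha}\Big)^2+\tfrac{1}{L+\alpha}\tilde g(v),\qquad q:=\tfrac{z+Lu}{L+\alpha}.
\]
Substituting $z=(L+\alpha)q-Lu$ into \eqref{lem328:VI} and dividing by $L+\alpha>0$, the inequality \eqref{lem328:VI} becomes precisely $\psi'(u)(v-u)\ge0$ for all $v\in[-b,b]$, i.e.\ the first-order optimality condition for $\min_{[-b,b]}\psi$. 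This algebraic identification is the backbone of the whole proof.

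For the forward implication I would argue as in Corollary \ref{cor:varinptw}: since $|u|\ge u_0(\tfrac{1}{L+\alpha})>0$ we have $u\neq0$, so by \ref{assb:6} together with the symmetry from Assumption \ref{ass_B} the function $\tilde g$ is differentiable at $u$ (using the one-sided derivative $\tilde g'(b)$ at the boundary). A global minimizer of $\psi$ over the convex set $[-b,b]$ at a point of differentiability satisfies $\psi'(u)(v-u)\ge0$ for all $v\in[-b,b]$, which by the identification above is exactly \eqref{lem328:VI}. This direction uses neither $|q|\ge q_0$ nor $u_0\ge u_I$.

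For the converse I would fix, without loss of generality by the symmetry of $\tilde g$, the case $u\ge u_0$, so $u\in[u_0,b]$. By \ref{assb:7} with $s=\tfrac{1}{L+\alpha}$ the map $v\mapsto\tfrac12 v^2+\tfrac{1}{L+\alpha}\tilde g(v)$, and hence $\psi$, is strictly convex on $[u_I,b]$; since $u_0\ge u_I$ we have $u\in[u_I,b]$, so the first-order condition \eqref{lem328:VI} forces $u$ to be the unique minimizer of $\psi$ on $[u_I,b]$. It then remains to upgrade optimality on this branch to global optimality on $[-b,b]$. Here I would invoke Theorem \ref{thm:discontinuity}: every global minimizer of $\psi$ is either $0$ or has modulus at least $u_0\ge u_I$, while by Lemma \ref{lem:prox_sign} a nonzero minimizer carries the sign of $q$; in particular no global minimizer can sit in the nonconvex gap $(0,u_0)$, and the negative branch is excluded once $q>0$ is confirmed (read off from $\psi'(u)=0$ for $u\in(0,b)$, respectively $\psi'(b)\le0$ for $u=b$, together with $|q|\ge q_0>0$). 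The threshold hypothesis $|q|\ge q_0$ guarantees that $0$ is not the unique global minimizer, so a nonzero global minimizer $u^{\#}\in[u_0,b]\subseteq[u_I,b]$ exists; by uniqueness on $[u_I,b]$ it equals $u$, whence $u\in\G(z)$.

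The step I expect to be the main obstacle is the borderline case $|q|=q_0$, where Theorem \ref{thm:discontinuity} only yields that $0$ is a possibly non-unique global minimizer, so the nonzero competitor $u^{\#}$ is not immediately available. To produce it I would pass to the limit: for $q'>q_0$ the minimizer $u^{\#}(q')$ lies in the compact set $[u_0,b]$, and letting $q'\downarrow q_0$ the lower semicontinuity of $\tilde g$ together with the continuity of $\psi(0)$ in $q$ (and the definition of $q_0$ in Lemma \ref{lem:ass_q0}) yields a nonzero minimizer at $q_0$ with value $\psi(0)$. Uniqueness on $[u_I,b]$ then identifies it with $u$, closing the argument; everything else is routine rearrangement.
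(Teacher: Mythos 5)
Your proposal follows essentially the same route as the paper: reduce $u\in\G(z)$ to global minimality of the scalar prox functional with shifted datum $q=\frac{z+Lu}{L+\alpha}$, obtain \eqref{lem328:VI} as the first-order condition using \ref{assb:6}, and for the converse combine strict convexity on $[u_I,b]$ from \ref{assb:7} (giving uniqueness of the stationary point on that branch) with Theorem \ref{thm:discontinuity} and the threshold $|q|\ge q_0$ to produce a nonzero global minimizer that must coincide with $u$. The one genuine addition is your limiting argument for the borderline case $|q|=q_0$, where the paper simply asserts the existence of a global solution of modulus at least $u_0$; your appeal to the closedness of the prox graph together with the bound $|u|\le\min(b,2|q|)$ fills in that step correctly.
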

\begin{proof}Let  us discuss the case $u\geq u_0$ only.
	If $u\in\G(z)$ for some $z\in\R$, then by definition
	\begin{align*}
	u&= \argmin\limits_{v\in \R}-zv+\frac{L}{2}(v-u)^2+\frac{\alpha}{2}v^2+\tilde g(v)\\
	&= \argmin\limits_{|v|\leq b}-zv+\frac{L}{2}(v-u)^2+\frac{\alpha}{2}v^2+\tilde g(v)\\
	&=\argmin\limits_{|v|\leq b} -(z+Lu)v+\frac{L+\alpha}{2}v^2+\tilde g(v)
	\end{align*}
	Hence, by first order necessary optimality condition it holds
	\begin{align*}\label{eq:lem4.1}
	0\leq &\left(-(z+Lu)+(L+\alpha)u+ \tilde g'(u)\right)(v-u)\\
	&= (-z+\alpha u+\tilde g'(u))(v-u) \notag
	\end{align*}
	for all $v\in [-b,b]$, which is the claim.

	Assume  $u_I\leq u_0$ holds, and let $u>0$ satisfy \eqref{lem328:VI}, then $u$  satisfies in particular
	$$0\leq(-z+\alpha u+\tilde g'(u))(v-u)=(-z-Lu+(\alpha+L)u+\tilde g'(u))(v-u)$$ for all $v\in[u_I,b]$, i.e., it is stationary to

	\begin{equation} \label{min_Pz}
	\min\limits_{v\in[u_I,b]}-zv+\frac{\alpha}{2}v^2+ \tilde g(v)
	\end{equation}
	and also to
	$$\min\limits_{v\in[u_I,b]}-(z+Lu)v+\frac{L+\alpha}{2}v^2+\tilde g(v).$$
	By convexity  $u$ is the unique solution of the latter and since by assumption $\frac{z+Lu}{L+\alpha} \ge q_0\left(\frac1{L+\alpha}\right)$, it follows  from Theorem \ref{thm:discontinuity} that there is a global solution  larger than $u_0$ to the unconstrained problem which together implies  $u\in\G(z)$.

\end{proof}

\begin{lemma} \label{lem:330} Let $\alpha>0$. Assume $u_{k+1}$ is a global solution to
\eqref{P_alpha} with
	 $|u_{k+1}(x)|\geq u_0\geq u_I(\frac1\alpha)$ for almost all $x\in I_{k+1}$, where $u_I(\frac1\alpha)$ is as in \ref{assb:7}. Then there is a continuous mapping $G:L^2(\Omega)\to L^2(\Omega)$ such that $$u_{k+1} =\chi_{k+1}G\left(\frac{z_k}{\alpha}\right).$$
\end{lemma}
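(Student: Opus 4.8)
The plan is to convert the pointwise optimality condition into a scalar variational inequality, solve it separately on each sign-branch using the strict convexity granted by \ref{assb:7}, and then glue the two branch solution operators into a single continuous scalar function whose superposition operator is the sought $G$. First I would observe that on $I_{k+1}$ one has $\nabla f(u_k)+L(u_{k+1}-u_k)=-z_k$, so Corollary~\ref{cor:varinptw} states that for almost every $x\in I_{k+1}$ the number $u:=u_{k+1}(x)$ satisfies the variational inequality \eqref{lem328:VI} with $z:=z_k(x)$. Writing $w:=z_k(x)/\alpha$ and $\psi(v):=\tfrac12 v^2+\tfrac1\alpha\tilde g(v)$, this is exactly the stationarity condition $(\psi'(u)-w)(v-u)\ge 0$ for all $v\in[-b,b]$ of the scalar problem $\min_{v\in[-b,b]}(-wv+\psi(v))$.

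Next I would exploit the hypothesis $|u_{k+1}(x)|\ge u_0\ge u_I(\tfrac1\alpha)$ together with \ref{assb:7}: the function $\psi$ is strictly convex on $[u_I,b]$ and, by symmetry of $\tilde g$, also on $[-b,-u_I]$. Restricting the variational inequality to the interval containing $u_{k+1}(x)$ then characterizes a positive $u_{k+1}(x)$ as the unique minimizer on $[u_I,b]$ and a negative one as the unique minimizer on $[-b,-u_I]$. I would therefore introduce the branch maps
\[
G^+(w):=\argmin_{v\in[u_I,b]}\bigl(-wv+\psi(v)\bigr),\qquad G^-(w):=\argmin_{v\in[-b,-u_I]}\bigl(-wv+\psi(v)\bigr),
\]
which are single-valued by strict convexity, satisfy $G^-(w)=-G^+(-w)$, and are continuous because they equal the truncation of the monotone inverse of $\psi'$ to the respective endpoints; this is the standard continuity of solutions of a strictly convex parametric program, cf.\ \cite[Cor.~5.20]{var_ana:rockafellar}.

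The gluing is the step I expect to be hardest. Since $G^+$ takes values in $[u_I,b]$ and $G^-$ in $[-b,-u_I]$, a plain case distinction on $\sign w$ would create a jump of size at least $2u_I$ at $w=0$, whereas a globally continuous scalar function is needed for the induced superposition operator to be continuous on $L^2(\Omega)$. The remedy is to show that the two branches are never simultaneously active on $I_{k+1}$: the sign preservation of the proximal map (Lemma~\ref{lem:prox_sign}), together with the component separation underlying Corollary~\ref{cor:G_properties} and Theorem~\ref{thm:discontinuity}, forces, for $x\in I_{k+1}$, the sign of $u_{k+1}(x)$ to agree with that of $z_k(x)$ and $|w|=|z_k(x)|/\alpha$ to stay above a positive threshold $c$. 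No point of $I_{k+1}$ is then mapped into a neighborhood of $w=0$, so I may set $G:=G^+$ on $[c,\infty)$, $G:=G^-$ on $(-\infty,-c]$, and interpolate linearly between $-u_I$ and $u_I$ on $[-c,c]$; this arbitrary choice in the gap affects neither the continuity of $G$ nor the identity to be proved.

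Finally I would assemble the statement. The resulting $G:\R\to\R$ is continuous and bounded by $b$, hence its superposition operator maps $L^2(\Omega)$ continuously into $L^2(\Omega)$ (indeed into $L^\infty(\Omega)$, as $\Omega$ has finite measure). For almost every $x\in I_{k+1}$ we have $u_{k+1}(x)=G(z_k(x)/\alpha)$ by the branch construction, while for $x\notin I_{k+1}$ both $u_{k+1}(x)$ and $\chi_{k+1}(x)\,G(z_k(x)/\alpha)$ vanish. Therefore $u_{k+1}=\chi_{k+1}G(z_k/\alpha)$, which is the claim.
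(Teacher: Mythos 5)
Your overall route is the same as the paper's: pass to the pointwise variational inequality, use the strict convexity from \ref{assb:7} to turn each sign branch into a single-valued continuous solution map, glue the branches into one continuous scalar function, and conclude via continuity of the superposition operator. The difficulty is, as you anticipate, the gluing, and this is where your argument has a genuine gap. You claim that Lemma \ref{lem:prox_sign}, Corollary \ref{cor:G_properties} and Theorem \ref{thm:discontinuity} force $\sign u_{k+1}(x)=\sign z_k(x)$ and $|z_k(x)|/\alpha\ge c>0$ on $I_{k+1}$. Those results control the quantity $\frac{z_k(x)+Lu_{k+1}(x)}{L+\alpha}$ (which is the argument of the proximal map, since $z_k+Lu_{k+1}=Lu_k-\nabla f(u_k)$), not $z_k(x)$ itself: they give $\bigl|\frac{z_k(x)+Lu_{k+1}(x)}{L+\alpha}\bigr|\ge q_0$ and sign agreement of $u_{k+1}(x)$ with $z_k(x)+Lu_{k+1}(x)$. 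Since $Lu_{k+1}(x)$ can dominate, neither the sign of $z_k(x)$ nor a positive lower bound on $|z_k(x)|$ follows. Moreover, even granting the threshold $c$, your glued $G$ is continuous at $w=\pm c$ only if $G^+(c)=u_I$, which you do not check.

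The paper circumvents all of this by gluing exactly where the constraint becomes active: with $\prox^{u_I}_{s\tilde g}(w)$ denoting the minimizer of $-wv+\frac12v^2+s\tilde g(v)$ over $\{v:\ u_I\le|v|\le b\}$ and $z_I:=\sup\{q>0:\ u_I=\prox^{u_I}_{s\tilde g}(q)\}$, one sets $l=\prox^{u_I}_{s\tilde g}$ for $|w|\ge z_I$ and interpolates linearly on $[-z_I,z_I]$; continuity at $\pm z_I$ is then automatic because the branch map equals $u_I$ there, and the identity on $I_{k+1}$ follows from $|u_{k+1}(x)|\ge u_0\ge u_I$ together with monotonicity of the constrained proximal map, with no claim about where $z_k/\alpha$ lands. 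A second, smaller gap: you justify continuity of the superposition operator by saying $G$ is bounded by $b$, but the setting explicitly allows $b=\infty$; in that case one needs a linear growth bound, and the paper derives $|\prox^{u_I}_{s\tilde g}(w)|\le 2|w|+c$ by testing optimality of $u$ against the competitor $\sign(u)u_I$. Both defects are repairable, but as written your proof establishes the lemma only for $b<\infty$ and rests on an unproved separation of $z_k/\alpha$ from the origin.
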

\begin{proof} We set $s:=\frac{1}{\alpha}$ and $u_{I}:=u_I(s)$ as in \ref{assb:7}.
	Note that by assumptions the following holds for $\alpha>0$ and $|u|\geq u_0\geq  u_{I}$:
	$$u\in\G(z)\Longleftrightarrow u\in \prox_{(L+\alpha)^{-1}\tilde g}\left(\frac{z+Lu}{L+\alpha}\right)\Longrightarrow u\in\prox^{u_I}_{s\tilde g}\left(\frac z\alpha\right),$$
	where we define, corresponding to \eqref{min_Pz},
	$$u\in\prox^{u_I}_{s\tilde g}(z) :\Longleftrightarrow u =  \argmin\limits_{|v|\in[u_I,b]}-zv+\frac{1}{2}v^2+s\tilde  g(v).$$

Due to assumption \ref{assb:7} and Lemma \ref{lem328_}, $u_{k+1}(x)$ is the only element in $\G(z_k(x))\setminus\{0\} $ for almost all $x\in I_{k+1}$ and it holds $u_{k+1}(x) =\prox^{u_{I}}_{s\tilde g}\left(\frac{ z_k(x)}{\alpha}\right)$. Set
\[
z_I:=\sup\{q>0: u_I=\prox_{s\tilde g}^{u_I}(q)\}.
\]
It is easy to see that $\prox_{s\tilde g}^{u_{I}}$ is single-valued for $|z|>0$.  Since it is in addition outer semicontinuous and locally bounded for $|z|\ge z_I$, it is also continuous on $\{z:\:|z|\ge z_I\},$ see also \cite[Corollary 5.20]{var_ana:rockafellar}.
 Let $u\in \prox_{s\tilde g}^{u_I}(z)$.
By optimality of $u$ we have
\[
-zu+\frac12u^2+s\tilde g(u)\leq -z\cdot
{ \sign(u)u_I}+\frac12 u_I^2+s\tilde g(u_I).
\]
Dividing by $|u|>0$, we get
\[
\frac12|u|\leq\left(\frac{u-\sign(u)u_I}{|u|}\right)z+\frac{u_I^2}{|u|}+s\frac{\tilde g(u_I)-\tilde g(u)}{|u|}.
\]
Having in mind that $\frac{u_I}{|u|}\le1$, the growth estimate  $|\prox_{s\tilde g}^{u_I}(z)|\leq 2|z|+c$ for all $|z|\ge z_I$ with some $c>0$ independent of $z$ follows.

Let $l:\R\to\R$ denote a continuous function defined by
\[ l(z) := \begin{cases}
\prox_{sg}^{u_I}(z) &\text{ if } |z|\geq z_I,\\
\frac{u_I}{z_I}z &\text{ if } |z|\le z_I.
\end{cases}
\]
Define
$$G:L^2(\Omega)\to L^2(\Omega),\: G(z)(x)=l(z(x))$$ for $z:\Omega\to\R$.  
Then by a well-known result, see e.g. \cite[Theorem 3.1]{Appell1990},
the superposition operator $G$ is continuous from $L^2(\Omega)\to L^2(\Omega)$
and the claim follows.
\end{proof}

Now, we are able to prove strong convergence of a subsequence of $(u_k)$ similar to {\cite[Thm. 3.17]{DW:1}}.
\begin{theorem}\label{thm:strongconv}
Suppose complete continuity of $\nabla f$
 and let $(u_k)\subset L^2(\Omega)$ be a sequence generated by Algorithm \ref{P_alpha} with weak limit point $u^*$. Under the same assumptions as in Lemma \ref{lem:330} $u^*$ is a strong sequential limit point of $(u_k)$ in $L^1(\Omega)$.
\end{theorem}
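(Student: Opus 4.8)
The plan is to exploit the structural representation $u_{k+1}=\chi_{k+1}\,G(z_k/\alpha)$ furnished by Lemma \ref{lem:330}, where $G:L^2(\Omega)\to L^2(\Omega)$ is a continuous superposition operator, and to combine it with the strong convergence of the arguments $z_k$ and the $L^1$-convergence of the characteristic functions from Theorem \ref{thm:conv}. First I would fix the subsequence with $u_{k_n}\rightharpoonup u^*$ in $L^2(\Omega)$. Complete continuity of $\nabla f$ then yields $\nabla f(u_{k_n})\to\nabla f(u^*)$ strongly in $L^2(\Omega)$, and together with $\|u_{k+1}-u_k\|_{L^2(\Omega)}\to0$ from Theorem \ref{thm:conv}(iii) this shows that $z_{k_n}=-\big(\nabla f(u_{k_n})+L(u_{k_n+1}-u_{k_n})\big)$ converges strongly in $L^2(\Omega)$ to $z:=-\nabla f(u^*)$. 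Continuity of $G$ then gives $G(z_{k_n}/\alpha)\to G(z/\alpha)=:w$ strongly in $L^2(\Omega)$.

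Next I would treat the characteristic functions. By Theorem \ref{thm:conv}(iv) and the summability $\sum_k\|\chi_k-\chi_{k+1}\|_{L^1(\Omega)}<+\infty$ underlying it, both $\chi_k$ and $\chi_{k+1}$ converge in $L^1(\Omega)$ to the same limit $\chi$, and after passing to a further subsequence I may assume $\chi_{k_n+1}\to\chi$ pointwise almost everywhere. The core step is to show that the product $u_{k_n+1}=\chi_{k_n+1}\,G(z_{k_n}/\alpha)$ converges strongly in $L^1(\Omega)$ to $\chi w$, via the splitting
\[
\|u_{k_n+1}-\chi w\|_{L^1(\Omega)}\le\|\chi_{k_n+1}\big(G(z_{k_n}/\alpha)-w\big)\|_{L^1(\Omega)}+\|(\chi_{k_n+1}-\chi)w\|_{L^1(\Omega)}.
\]
The first term is bounded by $|\Omega|^{1/2}\|G(z_{k_n}/\alpha)-w\|_{L^2(\Omega)}\to0$, using $0\le\chi_{k_n+1}\le1$ and the finiteness of $|\Omega|$ so that $L^2(\Omega)\hookrightarrow L^1(\Omega)$; the second term vanishes by dominated convergence, since $\chi_{k_n+1}-\chi\to0$ pointwise almost everywhere with integrable majorant $|w|\in L^1(\Omega)$.

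It remains to identify the limit and conclude. Since $u_{k_n}\rightharpoonup u^*$ and $u_{k_n+1}-u_{k_n}\to0$ in $L^2(\Omega)$, we have $u_{k_n+1}\rightharpoonup u^*$ weakly in $L^2(\Omega)$; testing against arbitrary $\phi\in L^\infty(\Omega)\subset L^2(\Omega)$ and comparing the weak $L^2$-limit with the strong $L^1$-limit gives $u^*=\chi w$ almost everywhere, hence $u_{k_n+1}\to u^*$ strongly in $L^1(\Omega)$. Finally $\|u_{k_n+1}-u_{k_n}\|_{L^1(\Omega)}\le|\Omega|^{1/2}\|u_{k_n+1}-u_{k_n}\|_{L^2(\Omega)}\to0$ transfers the convergence to $u_{k_n}\to u^*$ strongly in $L^1(\Omega)$, establishing that $u^*$ is a strong sequential limit point. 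I expect the product-convergence step to be the main obstacle: neither factor alone controls the $L^1$-norm of the product, and the argument rests on pairing the $L^2$-convergence of $G(z_{k_n}/\alpha)$ with the pointwise and $L^1$-convergence of the characteristic functions $\chi_{k_n+1}$, crucially using the finite measure of $\Omega$.
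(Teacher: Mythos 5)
Your proposal is correct and follows essentially the same route as the paper: both rest on the representation $u_{k+1}=\chi_{k+1}G(z_k/\alpha)$ from Lemma \ref{lem:330}, the strong $L^2$-convergence of $z_{k_n}$ via complete continuity of $\nabla f$ and Theorem \ref{thm:conv}, the continuity of the superposition operator $G$, and an $L^1$-product-convergence argument followed by identification of the weak and strong limits. The only cosmetic difference is that the paper dispatches the product step by H\"older's inequality using $\chi_k\to\chi$ in every $L^p$, $p<\infty$, whereas you handle the second term by dominated convergence with majorant $|w|$; these are interchangeable.
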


\begin{proof} By Lemma \ref{lem:330} there exists a continuous mapping $G:L^2(\Omega)\to L^2(\Omega)$ such that $u_{k+1}=\chi_{k+1}\left(G(\frac{z_k}{\alpha})\right)$.
	Let $u_{k_n}\rightharpoonup u^*$ in $L^2(\Omega)$.
	 Again, by Theorem \ref{thm:conv} and complete continuity of $\nabla f$, we obtain strong convergence of the sequence $$z_{k_n}  := -\left(\nabla f(u_{k_n})+L(u_{k_n+1}-u_k) \right)\to  -\nabla f(u^*) =:z^* $$ in $L^2(\Omega)$ as well as $\chi_k\to \chi$ in $L^p(\Omega)$ for all $p<\infty$ and $u_{k_n+1}\rightharpoonup u^*$.
	Then the convergence
	\[
	u_{k_n+1}=\chi_{k_n+1}G\left(\frac{1}{\alpha}z_{k_n}\right)\to \chi G\left(\frac{1}{\alpha}z^*\right)
	\]
	in $L^1(\Omega)$ follows by Hölder's inequality. Since strong and weak limit points coincide, it follows $u_{k_n}\to u^*$ in $L^1(\Omega)$ and
	$$u^* =\chi G\left(-\frac{1}{\alpha}\nabla f(u^*) \right).$$
\end{proof}

With the assumptions in Theorem \ref{thm:strongconv}  we can find an almost everywhere converging subsequence of iterates, i.e., $u_{k_n}(x)\to u^*(x)$ for almost every $x\in\Omega$. By the closedness of the mapping $\prox_{s\tilde g}$, we get
\begin{equation}\label{fixpoint}u^*(x) \in \prox_{\frac{1}{L+\alpha}\tilde g}\left(\frac{1}{L+\alpha}(Lu^*(x)-\nabla f(u^*)(x))\right)\enspace \text{f.a.a } x\in\Omega, \end{equation} i.e., $u^*$ is  $L$-stationary to the problem in almost every point.
If $L=0$ in \eqref{fixpoint}, then we obtain by Lemma \ref{lem:infdimsetting} $$u^*(x) = \argmin\limits_{u\in\R}f(u_k)(x)u(x)+\frac{\alpha}{2}|u(x)|^2+\tilde  g(u(x)) \enspace\text{ f.a.a. } x\in\Omega.$$ Hence, in this case $u^*$ satisfies the Pontryagin maximum principle.

\subsection{The proximal gradient method with variable stepsize}

The convergence results of this section require the knowledge of the Lipschitz modulus $L_f$ of $\nabla f$.
This can be overcome by line-search with respect to the parameter $L$ subject to a suitable decrease condition, which is a widely applied technique.

\begin{algorithm} [Proximal gradient with  variable step-size]
	\label{alg:PG-V} Choose $\eta>0$ and $ u_0\in U_{ad}$. Set $k=0$.
	\begin{enumerate}
		\item Determine $L_{k}\geq 0$ and $u_{k+1}$ as global  solution of
		\begin{equation*}\label{ch4:subP-PGV} \min\limits_{u\in  L^2(\Omega)}  f(u_k)+\nabla f(u_k)(u-u_k)+\frac{L_{k}}{2}\|u-u_k\|^2_{L^2(\Omega)}+j(u)\end{equation*}
		such that
		\begin{equation}\label{alg:algdecrease_cond}
		\eta\|u_{k+1}-u_k\|^2_{L^2(\Omega)}\leq (f(u_k)+j(u_k))-(f(u_{k+1}+j(u_{k+1}))\end{equation} is satisfied.
		\item Set $k:=k+1$, repeat.
\end{enumerate} \end{algorithm}

The convergence results as in Theorem \ref{thm:conv} can be carried over.
Then theorem \ref{thm:conv} holds without the assumption $L>L_f$.
The assumptions $1/L>s_0$ has to be replaced by $(\limsup L_k)^{-1}>s_0$.
This is satisfied if $s_0=0$, which is true by Theorem \ref{thm:discontinuity} if one of \ref{assb:4b}, \ref{assb:4c} is valid.

\section{Applications of the proximal gradient method}\label{sec5}
\subsection{Optimal control with $L^p$ control cost, $p\in(0,1)$}\label{ch:4}
In \cite{DW:1}, the discussed proximal method was analyzed and applied to optimal control problems with $L^0$ control cost, i.e., $g(u):=\frac\alpha2u^2+|u|_0$. In this section, we discuss the problem with $g(u):=\frac\alpha2 u^2+\beta|u|^p+\delta_{[-b,b]}$, where $p\in(0,1)$ and $b\in(0,\infty]$ and
consider
 \begin{equation} \label{P:Lp}
\min\limits_{u\in L^2(\Omega)}f(u)+\frac{\alpha}{2}\|u\|_{L^2(\Omega)}+\beta\int_\Omega|u(x)|^p\dx
\end{equation}
s.t. $$u\in U_{ad}:=\{u\in L^2{(\Omega)}:\:| u(x)|\leq b\:\text{ a.e. in }\Omega\}$$
 with $\alpha\geq0,\:\beta>0$.

To find a solution  to \eqref{P:Lp}with Algorithm \ref{alg:PG}, the subproblem, interpreted in terms of \eqref{P_alpha} with $\tilde g:= |u|^p+\delta_{[-b,b]},$
\begin{equation*}\label{ch4:subP1} \min\limits_{u\in U_{ad}}  f(u_k)+\nabla f(u_k)(u-u_k)+\frac{L}{2}\|u-u_k\|^2_{L^2(\Omega)}+\frac{\alpha}{2}\|u\|_{L^2(\Omega)}+\beta\int_\Omega|u(x)|^p\dx\end{equation*}
has to be solved in every iteration. According to Theorem \ref{lem:infdimsetting}, $u_{k+1}$ is a solution to \eqref{ch4:subP1} if and only if
\begin{equation*}\label{ch4:cond} u_{k+1}(x)\in \prox_{\frac{\beta}{L+\alpha}\tilde g}\left(\frac{1}{L+\alpha}(Lu_k(x) -\nabla f(u_k)(x))\right)\enspace f.a.a. \enspace x\in\Omega.\end{equation*}
Due to Theorem \ref{thm:discontinuity} it holds $u_{k+1}(x) =0$ or $|u_{k+1}(x)|\geq u_0$ for all $k$. The particular choice of $g$ allows to compute the constant $u_0$ explicitly by solving $\min\limits_{u\ne0}\frac{u}{2}+s\frac{g(u)}{2}$ and is given by $$u_0\left(\frac{\beta}{\alpha+L}\right) = \min\left(b, \left(\frac{\alpha+L}{2\beta(1-p)}\right)^{\frac{1}{p-2}}\right)$$
as a consequence of Lemma \ref{lem:ass_q0}.

\begin{figure} [H]
	\centering
	\includegraphics[height=6cm]{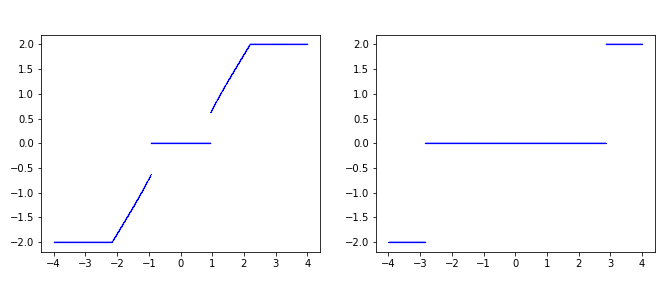}
	\caption{The mapping $\prox_{sg}(q)$ for parameters $(s,b,p) = (0.5,2,0.5)$ (left) and $(s,b,p) = (3,2,0.3)$ (right) with $\tilde g(u):=|u|^p+\delta_{[-b,b]}$.}
	\label{ch4:figH}
\end{figure}

We recall the definition of the set-valued map $\G:\R\to\R$, which reads in this case
$$u\in\G(z):=\G_{L,\alpha,s}:\Longleftrightarrow u=\argmin\limits_{|v|\leq b}-zv+\frac L2(u-v)^2+\frac\alpha2v^2+s|v|^p.$$
Note that $g$ satisfies assumptions \ref{assb:6} and \ref{assb:7} due to its structure. This allows to give an equivalent but more precise characterization of $\G$  as Lemma \ref{lem328_} applies to $u_{k+1}(x)$ on $I_{k+1}$.

\begin{corollary} Let $u\geq u_0({\frac{\beta}{L+\alpha}})$. Then
	$u\in\G(z_k(x)) \Longleftrightarrow u$ is a stationary point of \begin{equation*}\label{zProb} \min\limits_{u:|u|\leq b} -z_k(x)u+\frac{\alpha}{2}u^2+{\beta}|u|^p
	\end{equation*} for almost all $x\in I_{k+1}$.
\end{corollary}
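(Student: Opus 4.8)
The plan is to deduce the corollary directly from Lemma~\ref{lem328_}, after checking that its hypotheses hold for the present choice $\tilde g(u)=\beta|u|^p$ with $\dom\tilde g=[-b,b]$. This $\tilde g$ is $C^1$ on $(0,b)$, and $u\mapsto\frac12u^2+s|u|^p$ is strictly convex for all sufficiently large $u$, so Assumptions \ref{assb:6} and \ref{assb:7} hold. The first observation I would make is that, since $\tilde g'(u)=\beta p|u|^{p-1}\sign(u)$ for $u\ne0$, the variational inequality \eqref{lem328:VI} is word-for-word the first-order stationarity condition of the box-constrained problem $\min_{|v|\le b}-z_k(x)v+\frac\alpha2v^2+\beta|v|^p$ at the point $u$. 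Hence the corollary is precisely the ``if and only if'' of Lemma~\ref{lem328_} specialised to the $L^p$-setting, and the real work reduces to verifying the extra assumptions of that lemma.

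For the forward implication ($u\in\G(z_k(x))$ implies stationarity) I would simply invoke the first part of Lemma~\ref{lem328_}, which applies because the hypothesis $u\ge u_0(\tfrac\beta{L+\alpha})$ is exactly what is required. The converse is the part that needs the additional conditions $u_0\ge u_I$ and $\big|\tfrac{z_k(x)+Lu}{L+\alpha}\big|\ge q_0(\tfrac\beta{L+\alpha})$, where $u_I:=u_I(\tfrac\beta{L+\alpha})$ is from \ref{assb:7}, and establishing these is where the content lies.

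To check $u_0\ge u_I$, I would use the explicit thresholds of the $L^p$-case: writing $s:=\tfrac\beta{L+\alpha}$, strict convexity of $\frac12v^2+s|v|^p$ sets in at $u_I=(sp(1-p))^{1/(2-p)}$, whereas the jump constant from Theorem~\ref{thm:discontinuity} is $u_0=(2s(1-p))^{1/(2-p)}$; their ratio $(2/p)^{1/(2-p)}$ exceeds $1$ since $p<1$, giving $u_0\ge u_I$. For the bound by $q_0$ I would exploit stationarity itself: for an interior point $u\in(u_0,b)$ one has $z_k(x)=\alpha u+\beta pu^{p-1}$, so $\frac{z_k(x)+Lu}{L+\alpha}=u+sp\,u^{p-1}=:\psi(u)$, while at the active constraint $u=b$ the sign of the inequality yields $\frac{z_k(x)+Lb}{L+\alpha}\ge\psi(b)$. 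A short computation gives $\psi(u_0)=q_0(s)$ and $\psi'(u)=1-sp(1-p)u^{p-2}\ge1-\tfrac p2>0$ for $u\ge u_0$, so $\psi$ is increasing on $[u_0,b]$ and therefore $\frac{z_k(x)+Lu}{L+\alpha}\ge\psi(u_0)=q_0$.

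With both conditions in hand, the second part of Lemma~\ref{lem328_} furnishes the converse implication, and the claimed equivalence holds for almost all $x\in I_{k+1}$. I expect the main obstacle to be the verification of the $q_0$-bound, since it requires eliminating $z_k(x)$ through the stationarity relation and then proving monotonicity of $\psi$ on $[u_0,b]$, together with the separate bookkeeping for the boundary case $u=b$; by comparison, the inequality $u_0\ge u_I$ is routine once the explicit constants are written down.
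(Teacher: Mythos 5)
Your proposal is correct and follows the paper's route: the corollary is stated there as an immediate specialization of Lemma~\ref{lem328_}, with no further proof given, so your contribution is to actually verify the extra hypotheses of that lemma. Your two verifications are sound: the inequality $u_0(s)\ge u_I(s)$ for $s=\beta/(L+\alpha)$ indeed reduces to $(2/p)^{1/(2-p)}>1$, and your computation $\psi(u_0)=q_0(s)$ together with $\psi'\ge 1-p/2>0$ on $[u_0,b]$ correctly delivers the bound $\bigl|\tfrac{z_k(x)+Lu}{L+\alpha}\bigr|\ge q_0$ from stationarity. One remark: the paper's intended source of the $q_0$-bound is presumably not stationarity but the fact that on $I_{k+1}$ the relevant $u=u_{k+1}(x)$ is a \emph{nonzero} element of the proximal map, so the necessity part of Theorem~\ref{thm:discontinuity} applied to \eqref{sol-cond} gives $\bigl|\tfrac{z_k(x)+Lu_{k+1}(x)}{L+\alpha}\bigr|\ge q_0$ directly; this is shorter and explains why the statement is restricted to $x\in I_{k+1}$, whereas your argument proves the slightly stronger claim that the bound holds for \emph{any} stationary point $u\ge u_0$. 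A minor gap in your version is the boundary case $u_0=b$ (when the minimum in the formula for $u_0$ is attained at $b$), where $\psi(b)\ge q_0$ can fail; the prox-based argument avoids this, and in any event the case is degenerate.
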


A visualization of $\G$ is given in Figure \ref{fig:mapG} below.
\begin{figure} [H]
\centering{
\includegraphics[height=6cm]{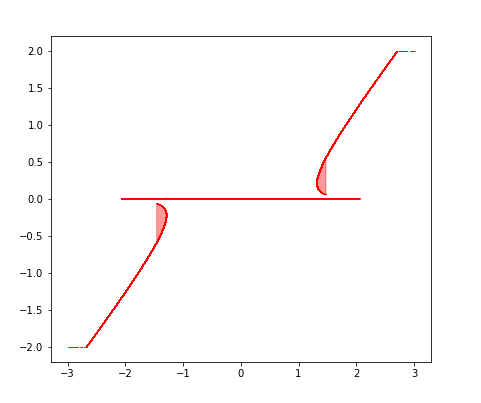}}
\caption{The union $(\mathcal G_0\cup\mathcal G^+\cup\G^-)(q)$ and the convexified map $(\mathcal G_0\cup\overline{\conv}\mathcal G^+\cup\overline{\conv}\G^-)(q)$ (filled area) (right)  for parameters $(L,\alpha,s,b) = (0.1,0.01,0.01,2)$ and $\tilde g(u):=|u|^{0.8}+\delta_{[-b,b]}$.
}
\label{fig:mapG}
\end{figure}
With a suitable choice of parameters, we can apply Theorem \ref{thm:strongconv} to the $L^p$ problem to obtain a strong convergent subsequence.

\begin{corollary}
\label{thm:strongconvLp}
Let $\alpha >0$ and $(u_k)$ a sequence of iterates. Furthermore, assume $L\leq (\frac{2}{p}-1)\alpha$. Then the assumptions of Theorem \ref{thm:strongconv} are satisfied. If in addition $\nabla f$ is completely continuous from $L^2(\Omega)$ to $L^2(\Omega)$, then every weak sequential limit point $u^*\in L^2(\Omega)$ is a strong sequential limit point in $L^1(\Omega)$.
\end{corollary}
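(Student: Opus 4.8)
The plan is to check that the data $\tilde g(u)=\beta|u|^p+\delta_{[-b,b]}(u)$ and the stepsize bound $L\le(\tfrac2p-1)\alpha$ fulfil every hypothesis of Lemma~\ref{lem:330}, so that Theorem~\ref{thm:strongconv} can be quoted directly. First I would note that $\tilde g$ satisfies Assumption~\ref{ass_B}: it is symmetric, lower semicontinuous, nonnegative, and vanishes at the origin, while on $(0,\epsilon)$ one has $\tilde g''(u)=\beta p(p-1)u^{p-2}\to-\infty$ as $u\searrow0$, which is \ref{assb:4b}. In particular $s_0=0$, so the sparsity dichotomy of Theorem~\ref{thm:discontinuity} is available for every $L>0$. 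Assumption~\ref{assb:6} is equally immediate, since $\tilde g'(u)=\beta p u^{p-1}$ is continuous on $(0,b)$ with finite limit $\beta p b^{p-1}$ at $b$.

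The substance of the argument is a comparison of two explicit thresholds. For \ref{assb:7} I would differentiate $\phi(u):=\tfrac12u^2+s\beta u^p$ twice, obtaining $\phi''(u)=1+s\beta p(p-1)u^{p-2}$, which is positive exactly for $u>(s\beta p(1-p))^{1/(2-p)}$; hence $\phi$ is strictly convex on $[u_I(s),b]$ with $u_I(s)=(s\beta p(1-p))^{1/(2-p)}$, so that
\[
u_I\!\left(\tfrac1\alpha\right)=\left(\frac{\beta p(1-p)}{\alpha}\right)^{\!1/(2-p)} .
\]
On the other side, the sparsity threshold for this $g$ has already been recorded as
\[
u_0\!\left(\tfrac{\beta}{\alpha+L}\right)=\min\!\left(b,\ \left(\frac{2\beta(1-p)}{\alpha+L}\right)^{\!1/(2-p)}\right).
\]

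Next I would compare the two. Because $x\mapsto x^{1/(2-p)}$ is increasing and the common factor $\beta(1-p)>0$ cancels, in the regime where the cap by $b$ is inactive one has
\[
\left(\frac{2\beta(1-p)}{\alpha+L}\right)^{1/(2-p)}\ge\left(\frac{\beta p(1-p)}{\alpha}\right)^{1/(2-p)}\iff\frac{2}{\alpha+L}\ge\frac p\alpha\iff L\le\left(\frac2p-1\right)\alpha .
\]
Thus the stepsize restriction is precisely the inequality $u_0\ge u_I(\tfrac1\alpha)$. When instead the cap is active, i.e.\ $u_0=b$, the growth bound $|u_{k+1}(x)|\le b$ together with the sparsity bound $|u_{k+1}(x)|\ge u_0=b$ forces every nonzero iterate value on $I_{k+1}$ to equal $\pm b$, so the reduced map collapses to a single value and is trivially continuous; hence the conclusion of Lemma~\ref{lem:330} persists in this degenerate case as well.

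Finally, the sparsity statement of Theorem~\ref{thm:discontinuity} gives $|u_{k+1}(x)|\ge u_0$ on $I_{k+1}$, and combined with $u_0\ge u_I(\tfrac1\alpha)$ this verifies the standing bound $|u_{k+1}(x)|\ge u_0\ge u_I(\tfrac1\alpha)$ required in Lemma~\ref{lem:330}; since $\alpha>0$ and each $u_{k+1}$ is a global minimizer of \eqref{P_alpha}, all hypotheses of that lemma hold. Consequently the assumptions of Theorem~\ref{thm:strongconv} are met, and if $\nabla f$ is completely continuous that theorem yields $u_{k_n}\to u^*$ strongly in $L^1(\Omega)$ along the weakly convergent subsequence, i.e.\ every weak sequential limit point is a strong $L^1$-limit point. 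I expect the only delicate step to be the threshold comparison together with the bookkeeping of the cap $\min(\cdot,b)$ in $u_0$; the rest is substitution into the machinery already assembled in Lemma~\ref{lem:330} and Theorem~\ref{thm:strongconv}.
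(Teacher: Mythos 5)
Your proposal is correct and follows essentially the same route as the paper: identify $u_I(1/\alpha)=\bigl(\beta p(1-p)/\alpha\bigr)^{1/(2-p)}$ as the inflection point giving Assumption (B6), observe that the condition $L\le(\tfrac2p-1)\alpha$ is exactly the inequality $u_0\ge u_I(1/\alpha)$ for the sparsity threshold $u_0$, and then invoke Lemma \ref{lem:330} and Theorem \ref{thm:strongconv}. Your additional verification of Assumption \ref{ass_B} and your discussion of the degenerate case where the cap $b$ is active go beyond what the paper records (the latter is sketched rather than fully argued, since a sign-dependent map onto $\{\pm b\}$ still needs the interpolation device of Lemma \ref{lem:330} near $z=0$), but the core argument coincides with the paper's.
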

\begin{proof}
	Let $k\in\mathbb{N}$. It holds $|u_{k+1}(x)| \geq u_0$ with $u_0 :=  \min\left(b,\left(\frac{\alpha+L}{2\beta(1-p)}\right)^{\frac{1}{p-2}}\right)$ on $I_{k+1}$. A short calculation yields that the assumptions on the parameters imply
	$$\left(\frac{\alpha+L}{2\beta(1-p)}\right)^{\frac{1}{p-2}} \geq \left(\frac{\alpha}{\beta p(1-p)}\right)^{\frac{1}{p-2}} =: u_{I}.$$
	Here, $u_{I}$ is the positive point of inflection of \eqref{zProb} and it holds that $$h_{q,\frac{\beta}{\alpha}}(u) = -qu+\frac{1}{2}u^2+\frac{\beta}{\alpha}|u|^p$$ is convex for all $q\in\R$ on $[u_{I},\infty)$ and $(-\infty, u_{I})$, respectively, which corresponds to Assumption \ref{assb:7}. The claim now follows by Lemma \ref{lem:330} and Theorem \ref{thm:strongconv}.
\end{proof}

\color{black}
\subsection{Optimal control with discrete-valued controls}
Let us investigate the optimization problem with optimal control taking discrete values.
That is, we choose $g(u)$ as the indicator function of integers, i.e.,
\[
g(u):=\delta_\mathbb{Z}(u) := \begin{cases}
0\quad &\text{if  } u\in \mathbb{Z},\\
\infty &\text{else}
\end{cases}.
\]
The problem now reads
\begin{equation} \label{P_discr}
\min_{u\in L^2(\Omega)} f(u)+ \int_\Omega\delta_\mathbb{Z}(u(x))\dx.
\end{equation}
Note, this choice satisfies Assumption \ref{assb:4c}. Applying Algorithm \ref{alg:PG}, the subproblem to solve is given by

\begin{equation}\label{ch4:subP_discr} \min\limits_{u\in L^2(\Omega)}  f(u_k)+\nabla f(u_k)(u-u_k)+\frac{L}{2}\|u-u_k\|^2_{L^2(\Omega)}+\int_\Omega \delta_\mathbb{Z}(u(x))\dx
\end{equation}
and can be solved pointwise and explicitly. The analysis carried out in Chapter \ref{section:PG} is applicable, however, the special choice of $g$ comes along with the following desirable result.
\begin{lemma} \label{lem:discr_estimate}
	Let $u_k,u_{k+1}\in U_{ad}$ be consecutive iterates of Algorithm \ref{alg:PG}. Then $$\|u_{k+1}-u_k\|^p_{L^p(\Omega)}\geq\|u_{k+1}-u_k\|_{L^1(\Omega)}$$
	holds for all $p\in[1,\infty)$.
\end{lemma}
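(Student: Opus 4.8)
The plan is to reduce everything to a pointwise inequality for integers, exploiting the fact that the choice $g = \delta_{\mathbb{Z}}$ forces admissible controls to be integer-valued. First I would argue that the consecutive iterates $u_k$ and $u_{k+1}$ take integer values almost everywhere: since $u_{k+1}$ solves \eqref{ch4:subP_discr} its objective value is finite, so $\int_\Omega \delta_{\mathbb{Z}}(u_{k+1}(x)) \dx < \infty$ and hence $u_{k+1}(x) \in \mathbb{Z}$ for almost all $x$. Equivalently this is read off directly from the optimality condition \eqref{sol-cond}, because $\prox_{L^{-1}\delta_{\mathbb{Z}}}(q)$ consists precisely of the integer(s) nearest to $q$. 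The same reasoning applies to $u_k$.

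Next I would use that the difference $u_{k+1}(x) - u_k(x)$ is therefore an integer for almost every $x$, so that $|u_{k+1}(x) - u_k(x)|$ is a nonnegative integer. The heart of the argument is the elementary inequality $m^p \ge m$, valid for every nonnegative integer $m$ and every $p \in [1,\infty)$: it is immediate for $m = 0$, and for $m \ge 1$ it follows from $m^p \ge m^1$ since the base is at least $1$ and the exponent is at least $1$. Setting $m = |u_{k+1}(x) - u_k(x)|$ yields the pointwise estimate $|u_{k+1}(x) - u_k(x)|^p \ge |u_{k+1}(x) - u_k(x)|$ almost everywhere.

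Finally I would integrate this pointwise inequality over $\Omega$ to obtain
\[
\|u_{k+1} - u_k\|_{L^p(\Omega)}^p = \int_\Omega |u_{k+1}(x) - u_k(x)|^p \dx \ge \int_\Omega |u_{k+1}(x) - u_k(x)| \dx = \|u_{k+1} - u_k\|_{L^1(\Omega)},
\]
which is exactly the assertion. I do not expect a genuine obstacle here: the only step requiring care is the justification that the iterates are integer-valued almost everywhere, which is precisely where the special structure of $\delta_{\mathbb{Z}}$ enters. This is also what makes the bound sharper than the generic estimate of Lemma \ref{lem:consec}: there the improvement only involves the separation constant $u_0$, whereas integrality upgrades it to a full comparison between the $L^p$- and $L^1$-norms of the increment.
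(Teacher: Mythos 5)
Your proof is correct and follows essentially the same route as the paper: the iterates are integer-valued almost everywhere, so the pointwise increment is either zero or at least one in absolute value, and the inequality $t^p\ge t$ for $t\ge 1$ then yields the claim upon integration. Your extra justification that finiteness of the objective forces integer values is a welcome, if routine, elaboration of what the paper states without comment.
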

\begin{proof}
	The claim follows directly, since either $|u_{k+1}(x)-u_k(x)|= 0$  or $|u_{k+1}(x)-u_k(x)|\geq 1$ as the iterates are integer-valued in almost every point.
\end{proof}
Lemma \ref{lem:discr_estimate} implies strong convergence of iterates $(u_k)$ in $L^1(\Omega)$.
\begin{theorem}
	Let $(u_k)$ be a sequence generated by Algorithm \ref{alg:PG} with weak limit point $u^*$. Then $u_k\to u^*$ in $L^1(\Omega)$.
\end{theorem}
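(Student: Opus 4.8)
The plan is to show that the entire sequence $(u_k)$ is Cauchy in $L^1(\Omega)$, and then to identify its strong limit with the weak limit point $u^*$. First I would invoke Lemma~\ref{lem:discr_estimate} with the choice $p=2$, which gives the estimate
\[
\|u_{k+1}-u_k\|_{L^1(\Omega)} \le \|u_{k+1}-u_k\|_{L^2(\Omega)}^2
\]
for every $k$. The proof of Theorem~\ref{thm:conv}(iii) already establishes the summability $\sum_{k=1}^\infty \|u_{k+1}-u_k\|_{L^2(\Omega)}^2 < +\infty$, so combining the two yields $\sum_{k=1}^\infty \|u_{k+1}-u_k\|_{L^1(\Omega)} < +\infty$.

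From this summability the Cauchy property in $L^1(\Omega)$ follows at once: for $m>n$ the triangle inequality gives $\|u_m-u_n\|_{L^1(\Omega)} \le \sum_{k=n}^{m-1}\|u_{k+1}-u_k\|_{L^1(\Omega)}$, and the right-hand side is the tail of a convergent series, hence tends to zero as $n\to\infty$. Consequently $(u_k)$ converges strongly in $L^1(\Omega)$ to some limit $\tilde u \in L^1(\Omega)$; I would stress that this is convergence of the \emph{whole} sequence, not merely of a subsequence, which is the key gain over the generic situation.

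Then I would identify $\tilde u$ with the weak limit point $u^*$. Along the subsequence realizing the weak limit one has $u_{k_n}\rightharpoonup u^*$ in $L^2(\Omega)$, so $\int_\Omega u_{k_n}\psi\dx \to \int_\Omega u^*\psi\dx$ for every $\psi\in L^2(\Omega)$. Since $\Omega$ has finite measure, every $\psi\in L^\infty(\Omega)$ lies in $L^2(\Omega)$, and for such $\psi$ the strong $L^1$ convergence of the whole sequence also gives $\int_\Omega u_{k_n}\psi\dx \to \int_\Omega \tilde u\psi\dx$. Equating the two limits shows $\int_\Omega (u^*-\tilde u)\psi\dx = 0$ for all $\psi\in L^\infty(\Omega)$, whence $u^*=\tilde u$ almost everywhere, and therefore $u_k\to u^*$ in $L^1(\Omega)$.

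I would not expect any serious obstacle here: the mechanism is simply that the $L^0$-type gap enforced by integer-valuedness upgrades the $L^2$-summability of increments to $L^1$-summability. The only point requiring a little care is the final identification step, where the strong $L^1$ limit of the full sequence must be matched against the weak $L^2$ limit of a subsequence; using $L^\infty(\Omega)\subset L^2(\Omega)$, which relies on $|\Omega|<\infty$, as a common space of test functions resolves this cleanly.
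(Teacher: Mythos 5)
Your proposal is correct and follows essentially the same route as the paper: invoke Lemma~\ref{lem:discr_estimate} (with $p=2$) to upgrade the square-summability of $\|u_{k+1}-u_k\|_{L^2(\Omega)}$ from Theorem~\ref{thm:conv} to summability of $\|u_{k+1}-u_k\|_{L^1(\Omega)}$, conclude that the whole sequence is Cauchy in $L^1(\Omega)$, and identify the strong limit with $u^*$. The only difference is that you spell out the final identification of the $L^1$ limit with the weak $L^2$ limit point via test functions in $L^\infty(\Omega)\subset L^2(\Omega)$, a step the paper leaves implicit.
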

\begin{proof} 
%
%
As in the proof of Theorem \ref{thm:conv}, we get
	$$\sum\limits_{k=1}^\infty\|u_{k+1}-u_k\|^2_{L^2(\Omega)}<\infty$$ and therefore by Lemma \ref{lem:discr_estimate}
	$$\sum\limits_{k=1}^\infty\|u_{k+1}-u_k\|_{L^1(\Omega)}\leq \sum\limits_{k=1}^\infty\|u_{k+1}-u_k\|^2_{L^2(\Omega)}<\infty$$
	Thus, $(u_k)$ is a Cauchy sequence in $L^1(\Omega)$ and therefore convergent in  $L^1(\Omega)$ and it holds $u_k\to u^*$.
\end{proof}

\section{Numerical experiments}
In this section we finally apply  the proximal gradient method to  optimal control problems of type \eqref{P} and carry out numerical experiments for cost functionals with different $g$.

Let in the following denote $f_l$ the reduced tracking-type functional $$f_l(u):=\|S_lu-y_d\|_{L^2(\Omega)}^2,$$ where $S_l$ is the weak solution operator of the linear Poisson equation \begin{equation}\label{ch4:numPois}-\Delta y = u\enspace \text{ in }\Omega,\enspace y=0\text{ on }\partial\Omega.\end{equation}
Further we define the nonlinear solution operator $S_{sl}$ of the semilinear equation
\begin{equation}\label{ch4:numPois_semlin}-\Delta y +d(y)= u\enspace \text{ in }\Omega,\enspace y=0\text{ on }\partial\Omega\end{equation}
where $d(x,y):\Omega\times\R\to\R$  is a $C^2$ Carathéodory Function  with respect to $y$
 with  $d(\cdot,0)$ in $L^p(\Omega)$, $n<p$, satisfying

 \begin{enumerate}
 	\item $\displaystyle {\frac{\partial d}{\partial y}(x,y)\ge0}$ for almost all  $x\in\Omega$,
 	\item $\forall \:M>0\:\exists C_M>0$ s.t.
 	$\displaystyle{\left|\frac{\partial d(x,y)}{\partial y}\right|+\left|\frac{\partial^2d(x,y)}{\partial y^2 }\right|\le C_M}$ for almost all $x\in\Omega$ and $|y|\le M$.

 \end{enumerate}

Then the equation is uniquely solvable, we refer to e.g., \cite{CasasL12012,Casas1993}
In addition, we define
$$f_{sl}:=\|S_{sl}(u)-y_d\|_{L^2(\Omega)}^2.$$
Furthermore, we choose $\Omega:=(0,1)^2$ to be the underlying domain in all following examples.
To solve the partial differential equation, the domain is divided into a regular triangular mesh and the PDE \eqref{ch4:numPois},\eqref{ch4:numPois_semlin} is discretized with piecewise linear finite elements. The controls are discretized with piecewise constant functions on the triangles. The finite-element  matrices were created with FEnicCS {\cite {fenicstutorial:1}}. If not mentioned otherwise, the meshsize is approximately $h =\sqrt{2}/160 \approx 0.00884$.
In each iteration a suitable constant $L_k>0$ needs to be determined, that satisfies the decrease condition \begin{equation}
\eta\|u_{k+1}-u_k\|^2_{L^2(\Omega)}\leq (f(u_k)+j(u_k))-(f(u_{k+1}+j(u_{k+1})),\end{equation} see \eqref{alg:algdecrease_cond}. Note, $L_k^{-1}$ can be seen as a stepsize. In \cite{DW:1} several stepsize selection strategies are proposed. In our tests, we use a simple Armijo-like backtracking line search method (\textbf{BT}).  That is, having an initial $L^0>0$ and a widening factor $\theta\in(0,1)$, determine $L_k$ as the smallest accepted  number of form $L^0\theta^{-i},\enspace i =0,1,...$. This method ensures a decrease in the objective values along the iterates, but it turns out to be very slow for large $L_0$, as the corresponding stepsize $L_k^{-1}$ gets smaller. For all our tests we choose
$$\eta = 10^{-4},\quad \theta = 0.5.$$
The stopping criterion is as follows:
\begin{quote} If $|f(u_{k+1})+g(u_{k+1})-(f(u_k)+g(u_k)|\leq 10^{-12}$:\\ \hspace*{0.8cm}STOP. \end{quote}

First, we consider control problems with $L^p$ control cost, which were investigated in chapter \ref{ch:4},  i.e., $g(u):=|u|^p+\delta_{[-b,b]}$ with $p\in(0,1)$.\paragraph{ Example 1} Let $g(u):=|u|^p+\delta_{[-b,b]}$ for $p\in(0,1)$ and find $$\min_{u\in L^2(\Omega)}f_l(u)+\|u\|_{L^2(\Omega)}^2+\beta\int_\Omega g(u(x))\dx.$$
Setting $U_{ad}:=\{L^2(\Omega):|u(x)|\leq b \text{ a.e. on }\Omega\}$ the problem is equivalent to
$$\min_{u\in U_{ad}}f_l(u)+\|u\|_{L^2(\Omega)}^2+\beta\int_\Omega |u(x)|^p\dx.$$
The first example is taken from {\cite{DW:1}}, where the proximal gradient algorithm was investigated for (sparse) optimal control problems with $L^0(\Omega)$ control cost.
Since $\int_\Omega|u|^pdx\to\int_\Omega|u|^0dx$ as $p\searrow0$, we expect similar solutions. We choose the same problem data as in  \cite{DW:1,ItoKunisch:1}. That is, if not mentioned otherwise, $$y_d(x,y) = 10x\sin(5x)\cos(7y)$$ and $\alpha = 0.01,\enspace \beta = 0.01, \enspace b = 4.$

A computed solution for $p=0.8$ is shown in Figure \ref{ch4:fig:lös}.
\begin{figure} [H]
	\centering

	\includegraphics[height=6cm]{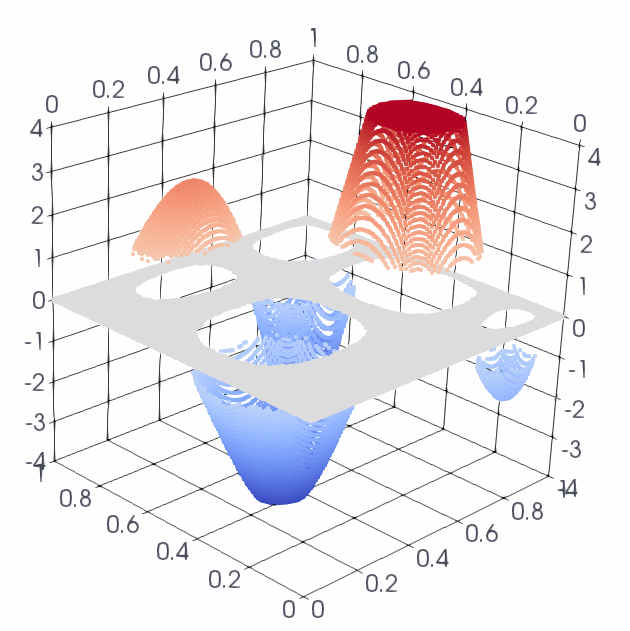}
	\caption{Solution $u$}

	\label{ch4:fig:lös}
\end{figure}

\paragraph{Convergence for decreasing  $p-$values.}
In the following we consider solutions for different values of $p$. We use the same data and discretization as above. We set $L_0 = 0.0001$.
\begin{table}[htbp]
	\centering
	\begin{tabular}{cccc}
		\hline
		$p$ & $J(u^*)$ & $N_p(u^*)$ &no. pde \\
		\hline
		0.5 & 5.3831 & 0.6711 & 15\\
		0.3 & 5.3819 & 0.5725 & 15 \\
		0.1 & 5.3808 & 0.4841 & 15\\
		0.01 & 5.3804& 0.4482 & 15\\
		0.001 & 5.3804 & 0.4448 & 15\\
		\hline
		0 & 5.38034 & 0.4445 & 15 \\
		\hline
	\end{tabular}
	\caption{Decreasing values of $p$}
	\label{table:T3}
\end{table}
In Table \ref{table:T3} it can be seen that $J(u^*)$ and $\displaystyle\int_\Omega|u^*|^pdx$ converge for decreasing values of $p$. The last row in Table \ref{table:T3} shows the result of applying the iterative hard-thresholding algorithm IHT-LS from  \cite{DW:1}  to the problem with $p=0$, which is in agreement with our expectation.  In the implementation we used a meshsize of $h = \sqrt{2}/500 \approx  0.0028$.

\paragraph{Discretization.}
Next, we solved the problem on different levels of discretization to investigate the influence. As can be seen in Table \ref{table:T4} the algorithm stays robust across different mesh sizes.\\

\begin{table}[htbp]
	\centering
	\begin{tabular}{cccc}
		\hline
		$h$ & $J(u^*)$ & $N_p(u^*)$ &no. pde \\
		\hline
		0.071 & 5.2239 & 0.6371 & 13\\
		0.035 & 5.3429 & 0.6581 & 15 \\
		0.0177 & 5.3732 & 0.6686 & 15 \\
		0.00884 & 5.3808 & 0.6704 & 15\\
		0.00442 & 5.3827 & 0.6710 & 15\\
		0.00221 & 5.3832 & 0.6711 & 15\\
		\hline
	\end{tabular}
	\caption{influence of meshsize}
	\label{table:T4}
\end{table}

\paragraph{Convergence in the case $\boldsymbol{L> (2/p-1)\alpha}$.} So far, in every experiment the assumption on the parameters was naturally satisfied, such that strong convergence of iterates can be proven according to Theorem \ref{thm:strongconvLp}. The numerical results confirmed the theory. We will now investigate the case where the assumption is not satisfied, i.e., we choose parameters such that $L> (2/p-1)\alpha$.  In the following we present the result for the problem parameters $$ \alpha = 0.001,\enspace p = 0.9, \enspace L_0= 0.005.$$
Furthermore, we set $b = 6$.
In our computations the algorithm needed very long to reach the stopping criteria $|J(u_{k+1})-J(u_k)|\leq 10^{-12}$ as can be seen in Table \ref{table:T5}.
This might be due to the parameter choice and the step-size strategy. For smaller mesh-sizes more iterations are needed.
\begin{table}[htbp]
	\centering
	\begin{tabular}{cccc}
		\hline
		$h$ & $J(u^*)$ & $N_p(u^*)$ &no. pde \\
		\hline
		0.00884 & 5.3567 & 1.1246 & 395\\
		0.00442 & 5.3567 & 1.1247 & 601\\
		0.00221 & 5.3567 & 1.1253 & 821\\
		\hline
	\end{tabular}
	\caption{performance for bad choice of parameters across different mesh-sizes}
	\label{table:T5}
\end{table}

Recall, the problem in the analysis that comes with this choice of parameters is that the map $\mathcal G$ in Lemma \ref{lem:stat_points} is not necessarily single-valued anymore on the set of points where an iterate is not vanishing, see also Figure \ref{fig:mapG}.
Let $u_I:=u_{I}(\beta/\alpha)>0$ denote the  constant from Assumption \ref{assb:7}
and define the set
$$\Omega_{m,k}:=\{x\in\Omega: 0<|u_k(x)|<u_{I}\}.$$
Then $\Omega_{m,k}$ is the set of points for which the crucial assumption in Lemma \ref{lem:330} that implies single-valuedness of $\G\setminus\{0\}$ is not satisfied.
In our numerical experiments, however, we made the observation that the measure of the set $\Omega_{m,k}$ is decreasing as $k\to\infty$, see Figure \ref{fig:ch4measure}.
Across different mesh-sizes h, the measure decreases and tends to zero along the iterations.
\begin{figure} [H]
	\centering

	\includegraphics[height=6cm]{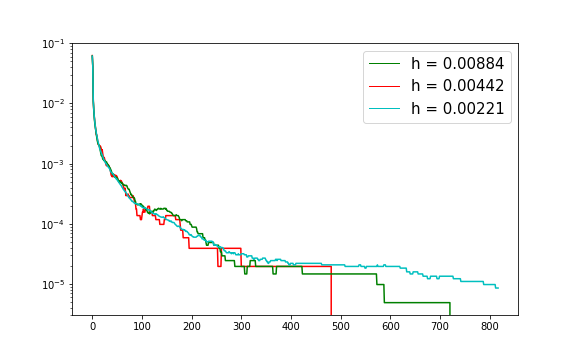}
	\caption{Measure of $\Omega_{m,k}$ at iteration $k$ for different discretization levels}

	\label{fig:ch4measure}
\end{figure}

Unfortunately, we were not able to prove such a behavior in the analysis and have no theoretical evidence whether this can be expected in general. But assuming $$ |\Omega_{m,k}|\to0$$ based on our numerical result,   strong convergence of the sequence $(u_k)$ can be concluded similar to Theorem \ref{thm:strongconv}.

\paragraph{Example 2} Let us now consider the semilinear problem
$$\min_{u\in U_{ad}}f_{sl}(u)+\|u\|_{L^2(\Omega)}^2+\beta\int_\Omega g(u(x))\dx$$ with $g(u)= |u|^p$, $p\in(0,1)$. This example can be found in \cite{CasasL12012} for semilinear control problems with $L^1$-cost. Here, $f_{sl}$ is given by the standard tracking type functional 
$u\mapsto\|y_u-y_d\|^2_{L^2(\Omega)}$, where $y_u$ is the solution of the semilinear elliptic state equation
\[
-\Delta y+y^3=u \quad\text{in }\Omega,\quad y = 0\quad\text{on }\partial\Omega.
\]
The data is given by $\alpha = 0.002$, $\beta =0.03$, $b = 12$ and $y_d = 4\sin(2\pi x_1)\sin(\pi x_2)e^{x_1}$. We use the parameter $L_0 = 0.001$.
\begin{figure} [H]
	\centering

	\includegraphics[height=6cm]{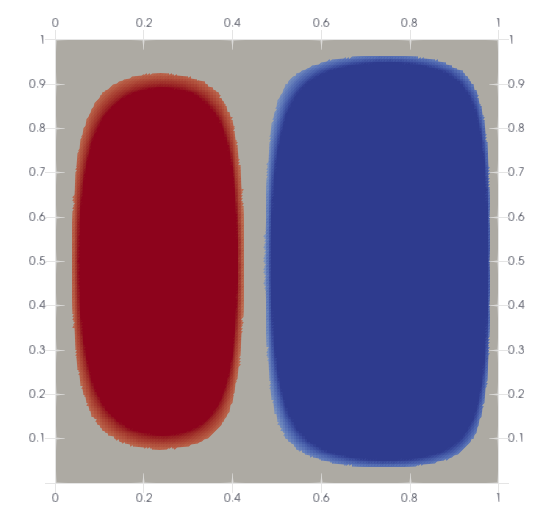}
	\caption{solution $u$ of the semilinear optimal control problem with $g(u):=|u|^{0.5}.$}

	\label{fig:lsg_semlin}
\end{figure}
We made similar observations as in the linear case concerning the influence of discretization and different values of $p$. Also the behavior of the algorithm in case of a bad choice of parameters is as before (see Example 1).

\paragraph{Example 3} In this last test, we consider an optimal control problem with discrete-valued controls. That is, we choose $$g(u) := \delta_\mathbb{Z}(u),$$
where $\delta_M$ denotes the indicator function of a set $M$, i.e., $\delta_M(u):=\begin{cases}
0\quad &\text{if  } u\in M,\\
\infty &\text{else}
\end{cases}$.
Here, the subproblem in Algorithm $\ref{alg:PG}$ can be solved pointwise and explicitly.
We adapt again the setting from Example 1. In Figure \ref{fig:lsg_disc}, a solution plot of the optimal control is displayed. We used exactly the same problem data as before in Example 1, but set $b=2$ and $L_0 = 0.001$.
Again, we find the algorithm is robust with respect to the discretization.
\begin{figure} [H]
	\centering

	\includegraphics[height=6cm]{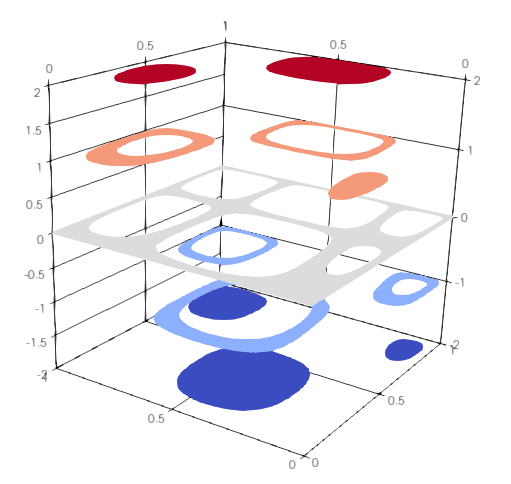}
	\caption{optimal control with discrete values}

	\label{fig:lsg_disc}
\end{figure}


\bibliography{literature}
\bibliographystyle{plain_abbrv}

\end{document}